\documentclass[11pt]{article}
\usepackage{amsmath,amsfonts,amssymb, amsthm,enumerate,graphicx}
\usepackage{tikz,authblk}

\usepackage{wrapfig}
\usepackage{subcaption}
\usepackage{url}
\usepackage[colorlinks=true, allcolors=blue]{hyperref}
\usepackage{stackengine,scalerel}
\usetikzlibrary {decorations.pathmorphing, decorations.pathreplacing, decorations.shapes}

\newtheorem{theorem} {{\textsf{Theorem}}}
\newtheorem{proposition}[theorem]{{\textsf{Proposition}}}

\newtheorem{definition}[theorem]{{\textsf{Definition}}}

\newtheorem{problem}[theorem]{{\textsf{Problem}}}
\newtheorem{lemma}[theorem]{{\textsf{Lemma}}}

\begin{document}

\title{The Homotopy Types of the Independence and Perfect Matching Complexes of M\"obius and Circular Ladder Graphs}

\author{Anshu Agarwal and Biplab Basak$^1$}
	
\date{September 14, 2026}
	
\maketitle
	
\vspace{-10mm}
\begin{center}
		
\noindent {\small Department of Mathematics, Indian Institute of Technology Delhi, New Delhi, 110016, India.$^2$}

\footnotetext[1]{Corresponding author}
		
\footnotetext[2]{{\em E-mail addresses:} \url{maz228084@maths.iitd.ac.in} (A. Agarwal), \url{biplab@iitd.ac.in} (B. Basak).}
		
\medskip
	
\end{center}
	
\hrule
	
\begin{abstract}
The independence complex and perfect matching complex of a graph are simplicial
complexes encoding, respectively, its independent sets and perfect matchings.
Determining their homotopy types is generally difficult, with explicit
descriptions known mainly for highly structured graph families. In this
article, we determine the homotopy types of these complexes for the
M\"obius ladder graphs $M_{2n}$ and circular ladder graphs
$\mathcal{C}_{2n}$. The M\"obius ladder graphs $M_{2n}$ are highly symmetric cubic graphs
obtained from a $2n$-cycle by joining opposite vertices, while the circular ladder graphs $\mathcal{C}_{2n}$ are the Cartesian products of an $n$-cycle and a path of length one. We show that $\operatorname{Ind}(M_{2n})$ and
$\operatorname{Ind}(\mathcal{C}_{2n})$ have the homotopy type of wedges of
spheres, with the numbers and dimensions of the spheres exhibiting periodic behavior according to $n$ modulo $4$. For the perfect matching complex
$\mathcal{M}_p(M_{2n})$, its homotopy type is a wedge of two copies of
$\mathbb{S}^{(n-2)/2}$ when $n$ is even, while for odd $n$ it has the
homotopy type of a wedge of spheres whose numbers and dimensions depend
periodically on $n$ modulo $6$. The perfect matching complex
$\mathcal{M}_p(\mathcal{C}_{2n})$ is contractible for odd $n$, whereas for
even $n$ its homotopy type is a wedge of spheres, with the numbers and
dimensions determined periodically by $n$ modulo $6$. Thus, we obtain
explicit homotopy types for the independence and perfect matching
complexes of two highly symmetric families of cubic graphs, which are also
relevant in crystallization theory and the combinatorial representation of
PL manifolds.
\end{abstract}

\noindent {\small {\em MSC 2020\,:} 05C69, 05C70, 05E45, 55P15, 55U10.
		
\noindent {\em Keywords:} Independence complex; Perfect matching complex; Simplicial complex; M\"obius ladder graphs; Circular ladder graphs; Homotopy types.}
	
\medskip

\section{Introduction}
Let $G=(V,E)$ be a finite simple graph. The \emph{independence complex} of $G$, denoted by $\operatorname{Ind}(G)$, is the simplicial complex whose faces are the independent sets of $G$, that is, the subsets of $V$ containing no two adjacent vertices. Despite this elementary definition, independence complexes often exhibit remarkably rich and subtle topological behaviour. They provide a natural bridge between graph theory and topology and have become important objects of study in topological and enumerative combinatorics.

The importance of independence complexes is further emphasized by their close relationship with other simplicial complexes associated with graphs. For example, the matching complex of a graph can be realized as the independence complex of its line graph. Consequently, several developments in the study of matching complexes draw upon methods and results originating in the theory of independence complexes; see, for instance, \cite{BH17,Matsushitamatching}. More generally, the study of topological properties of simplicial complexes arising from graph-theoretic structures has developed into an important theme in topological combinatorics. Jonsson's monograph~\cite{J08} provides a systematic account of this subject and discusses a wide range of simplicial complexes naturally associated with graphs and related combinatorial structures.

Over the years, numerous methods have been developed to investigate the topology of independence complexes. Among them are the matching tree algorithm, the star cluster method, the fold lemma, and systematic applications of deletion and link operations; see, for example, \cite{a12, bousquet2008independence,engstrom, e08, kawmurahomotopytype,Matsushitantimes4,Matsushitantimes6}. These techniques have proved particularly useful in determining homotopy types and homological properties of independence complexes associated with structured graph families. In addition, Jonsson~\cite{J08} developed a broader matroid-theoretic framework for complexes of forests, generalizing independence complexes of matroids. Star clusters in independence complexes of graphs were studied in \cite{Barmakstarclusters}. The independence complexes of path graphs and cycle graphs were studied in \cite{k99}.
 The total Betti number of the independence complexes of ternary graphs was investigated in \cite{Zhsng25}. The homotopy type of the independence complex of graphs with no induced cycles of length divisible by \(3\) was studied in \cite{Kim2022}. Further results concerning the homotopy types of independence complexes can be found in \cite{cs26, Samir21}.

Another important simplicial complex associated with a graph is its \emph{matching complex}, whose vertices are the edges of $G$ and whose faces are the matchings of $G$. Matching complexes have been extensively studied in topology, algebra, and combinatorics, beginning with the work of Bouc~\cite{bouc1992homologie}; a prominent family is given by the matching complexes of complete bipartite graphs, known as \emph{chessboard complexes}~\cite{christos,bjorner1994chessboard,jojic2018h,shareshian2007torsion,wachs2003topology, ziegler1994shellability}. Matching complexes for other families of structured graphs have also been studied; see \cite{GeneralBayer,matsushita2022matching,Matsushitamatching}.
 A related complex is the \emph{perfect matching complex} $\mathcal{M}_p(G)$, defined, for a graph $G$ admitting a perfect matching, as the pure simplicial complex whose facets are the perfect matchings of $G$; thus, it records the combinatorial interactions among perfect matchings and forms a subcomplex of the matching complex. The topology of perfect matching complexes has recently received increasing attention; for example, Bayer et al.~\cite{bayer2022perfect} studied their homotopy types for certain honeycomb graphs, while perfect matching complexes of polygonal line tilings were studied in \cite{cs25}. In \cite{McSorley}, various combinatorial structures associated with the M\"obius ladder graph have been studied.

In this article, we study the topology of the independence complexes and perfect matching complexes of M\"obius ladder graphs and circular ladder graphs. Both families form natural and highly symmetric classes of cubic graphs and provide interesting settings for studying graph-associated simplicial complexes. M\"obius ladder graphs are obtained from an even cycle by adding edges joining pairs of antipodal vertices (see Section \ref{defi}). The cyclic structure, together with these additional antipodal edges, imposes nontrivial restrictions on independent sets, while, at the same time, their high degree of symmetry gives rise to a rich collection of perfect matchings. These features make M\"obius ladder graphs a natural class of graphs for investigating the topology of independence complexes and perfect matching complexes. Circular ladder graphs have a similar cyclic structure and can be viewed as the Cartesian product of a cycle with a path of length one (see Section \ref{defi}). They are also $3$-regular and possess a high degree of symmetry. Their structure provides a natural framework for studying independent sets and perfect matchings and, consequently, the topology of the associated independence and perfect matching complexes. In this article, we investigate these two families from a topological perspective and compare the resulting homotopy types.

These graphs are of particular interest from the viewpoint of the interplay between graph theory and topology. Indeed, since they are cubic graphs admitting a proper edge-coloring with three colors, say ${0,1,2}$, they naturally give rise to $3$-colored simplicial cell complexes. To see this, associate to each vertex $v$ of the graph a triangle whose vertices are labeled by the colors $0,1,2$. If two vertices $u$ and $v$ are joined by an edge of color $i$, then identify the edge of the triangle corresponding to $u$ with the edge of the triangle corresponding to $v$ opposite the vertex labeled $i$. Thus, the edge-coloring determines the gluing pattern of the triangles and, consequently, produces a closed connected surface. This construction provides a natural combinatorial bridge between graph theory and topology. The vertices and colored edges of the graph encode the triangles and their gluings, respectively, while the resulting surface captures topological information inherent in the combinatorial structure of the graph. In particular, this construction is a basic instance of the colored-graph approach to topology and illustrates how properly edge-colored graphs can serve as combinatorial representations of closed surfaces. For further details on crystallization theory, we refer the reader to \cite{b17, fgg86}.

The independence complexes $\operatorname{Ind}(M_{2n})$ and $\operatorname{Ind}(\mathcal{C}_{2n})$, together with the perfect matching complexes $\mathcal{M}_p(M_{2n})$ and $\mathcal{M}_p(\mathcal{C}_{2n})$, exhibit rich and intricate combinatorial structures that are reflected in their underlying topological properties. In this article, we determine their homotopy types and prove the following results.

\bigskip

\noindent {\bf Main Theorem 1.} {\em Let $M_{2n}$ be the the M\"obius ladder graph, $n\geq 2$. Then we have the following.}

\begin{enumerate}[$(a)$]
    \item $ \operatorname{ Ind}(M_{2n})\simeq \begin{cases}
       \mathbb{S}^{2k-1} & \text{\em if } ~  n=4k, \\ \bigvee_3 \ \mathbb{S}^{2k} & \text{\em if } ~ n=4k+2, \\
       \mathbb{S}^{2k} & \text{\em if} ~ n=4k+1, 4k+3,
       
   \end{cases}$

   \item $\mathcal M_p(M_{2n}) \simeq \bigvee_2 \ \mathbb{S}^{k-1}$  {\em if} $n=2k$,

   \item $ \mathcal M_p(M_{2n})\simeq \begin{cases}
       \bigvee_4 \ \mathbb{S}^{2k+1} &  \text{\em if }  ~  n=3(2k+1), \\ \bigvee_2 \ \mathbb{S}^{2k+2} & \text{\em if } ~  ~ n=3(2k+1)+2, 3(2k+1)+4.
   \end{cases}$
\end{enumerate}

\bigskip

\noindent {\bf Main Theorem 2.} {\em Let $\mathcal{C}_{2n}$ be the the circular ladder graph, $n\geq 2$. Then we have the following.}

\begin{enumerate}[$(a)$]
    \item $ \operatorname{ Ind}(\mathcal C_{2n})\simeq \begin{cases}
    \bigvee_3 \mathbb{S}^{2k-1} & \text{if } n=4k, \\   \mathbb{S}^{2k-1} & \text{if } n=4k+1 \\ \mathbb{S}^{2k} & \text{if } n=4k+2,\\
    \mathbb{S}^{2k+1} & \text{if } n=4k+3,
   \end{cases}$

   \item $\mathcal M_p(\mathcal C_{2n}) \simeq \ast$  {\em if} $n=2k+1$,

   \item $ \mathcal M_p(\mathcal C_{2n})\simeq \begin{cases}
       \mathbb{S}^{\frac{n-2}{2}}\ \vee \ \mathbb{S}^{\frac{n-2}{2}} \bigvee_4 \ \mathbb{S}^{2k} & \text{if } n=3(2k), \\ \mathbb{S}^{\frac{n-2}{2}}\ \vee \ \mathbb{S}^{\frac{n-2}{2}} \bigvee_2 \ \mathbb{S}^{2k+1} & \text{if } n=3(2k+1)\pm 1.
   \end{cases}$
\end{enumerate}

\section{Preliminaries}\label{pre}
A \emph{graph} \(G\) is an ordered pair \(G=(V,E)\), where \(V\) is a finite set whose elements are called \emph{vertices}, and \(E\) is a set of pairs \(uv\), where \(u,v\in V\) and \(u\neq v\). The elements of \(E\) are called \emph{edges}. If \(uv\in E\), then \(u\) and \(v\) are said to be \emph{adjacent}. We define the \textit{open neighborhood} of a vertex $v$ in $G$, denoted by $N_G(v)$, as $N_G(v)=\left\{u\in V(G)\;\middle|\;u\neq v\text{ and }uv\in E(G)\right\}$. Similarly, the \textit{closed neighborhood} of $v$ in $G$, denoted by $N_G[v]$, is defined as $N_G[v]=N_G(v)\sqcup\{v\}$. When the underlying graph is clear from the context, we omit the subscript $G$ and write $N(v)$ and $N[v]$ to denote the open and closed neighborhoods of $v$, respectively. Let $S$ be a subset of $V(G)$. The \textit{induced subgraph} of $G$ on $S$, denoted by $G[S]$, is the graph whose vertex set is $S$ and whose edge set consists of all edges of $G$ having both endpoints in $S$. For a subset $H\subseteq V(G)$, we denote the induced subgraph $G[V(G)\setminus H]$ by $G\setminus H$. When $H$ is a singleton set, say $\{v\}$, we denote the induced subgraph $G[V(G)\setminus \{v\}]$ by $G\setminus v$.

A \emph{simplicial complex} \(K\) on a vertex set \(V\) is a collection of finite subsets of \(V\), called \emph{faces} or \emph{simplices}, such that if \(\sigma\in K\) and \(\tau\subseteq \sigma\), then \(\tau\in K\). In particular, \(K\) contains the empty set. A \emph{facet} of a simplicial complex \(K\) is a face of \(K\) that is maximal with respect to inclusion. That is, a face \(\sigma\in K\) is a facet if there is no face \(\tau\in K\) such that $\sigma\subsetneq \tau$. Let \(K\) be a simplicial complex with vertex set \(V(K)\) of cardinality $m$. The \emph{geometric realization} (or \emph{geometric carrier}) of \(K\), denoted by \(\lvert K\rvert\), is the topological space obtained by realizing each simplex of \(K\) as a geometric simplex and gluing these simplices along their common faces according to the face relations in \(K\). More precisely, $$\lvert K\rvert=\left\{(\lambda_v)_{v\in V(K)}\in \mathbb{R}^{m}\mid \lambda_v\geq 0,\ \sum_{v\in V(K)}\lambda_v=1,\ \{v\in V(K):\lambda_v>0\}\in K\right\},$$ endowed with the subspace topology inherited from \(\mathbb{R}^{m}\).
We refer to \cite{J08} for fundamental terms and facts concerning simplicial complexes. Let $v\in V(K)$. The \emph{link} of $v$ in $K$ is defined as
$\{\sigma\in K : v\notin\sigma,\ \sigma\cup\{v\}\in K\}$,
while the \emph{deletion} of $v$ from $K$ is defined as
$\{\sigma\in K : v\notin\sigma\}$.

Let \(K\) and \(L\) be two simplicial complexes with disjoint vertex sets. The \emph{join} of \(K\) and \(L\), denoted by \(K\star L\), is the simplicial complex defined by
\[
K\star L=\{\sigma\cup\tau : \sigma\in K,\ \tau\in L\}.
\]
Equivalently, the faces of \(K \star L\) are precisely the unions of a face of \(K\) and a face of \(L\). Let \(K\) be a simplicial complex and let \(S^0=\{u,v\}\) be the simplicial complex consisting of two isolated vertices. The \emph{suspension} of \(K\), denoted by \(\Sigma K\), is defined as the join \(K\star S^0\), that is, \(\Sigma K=K\star\{u,v\}\). Let \(K\) and \(L\) be two simplicial complexes, and let \(v\in V(K)\) and \(w\in V(L)\) be vertices. The \emph{wedge} of \(K\) and \(L\), denoted by \(K\vee L\), is the simplicial complex obtained by identifying the vertices \(v\) and \(w\).

Let \(X\) and \(Y\) be two topological spaces. Two continuous maps \(f,g:X\to Y\) are said to be \emph{homotopic}, denoted by \(f\simeq g\), if there exists a continuous map \(H:X\times [0,1]\to Y\) such that \(H(x,0)=f(x)\) and \(H(x,1)=g(x)\) for every \(x\in X\). The map \(H\) is called a \emph{homotopy} from \(f\) to \(g\). The spaces \(X\) and \(Y\) are said to have the \emph{same homotopy type}, or equivalently, to be \emph{homotopy equivalent}, if there exist continuous maps \(f:X\to Y\) and \(g:Y\to X\) such that \(g\circ f\simeq \operatorname{id}_X\) and \(f\circ g\simeq \operatorname{id}_Y\). Let \(X\) and \(Y\) be two topological spaces. A continuous map \(f:X\to Y\) is said to be \emph{null-homotopic} if \(f\) is homotopic to a constant map, that is, if \(f\simeq c\) for some constant map \(c:X\to Y\). Let \(K\) and \(L\) be two simplicial complexes. If \(\lvert K\rvert\simeq \lvert L\rvert\) (homotopy equivalent), then we simply write \(K\simeq L\). If \(\lvert K\rvert\) has the homotopy type of a single point, then we simply write \(K\simeq *\). In this case, \(K\) is said to be \emph{contractible}. For further details and related concepts, we refer the reader to Hatcher~\cite{Hatcher}. 

\begin{proposition}[\cite{Hatcher}, Section 0.4]\label{cone and suspension}
Let $K$ be a subcomplex of a simplicial complex $X$. Suppose that $K$ is contractible in $X$, that is, the inclusion map, $i:A\hookrightarrow X$ is null-homotopic, then $X \cup \operatorname{cone}(K)\simeq X \vee \Sigma(K)$.
\end{proposition}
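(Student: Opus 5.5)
The plan is to build the homotopy equivalence from two ingredients: the quotient map that collapses the cone, and the fact that collapsing a contractible subcomplex is a homotopy equivalence. Since $K$ is a subcomplex of the simplicial complex $X$, the pair $(X,K)$ is a CW pair and has the homotopy extension property. Form $Y = X \cup \operatorname{cone}(K)$. Then the pair $(Y, \operatorname{cone}(K))$ is also a CW pair, and $\operatorname{cone}(K)$ is contractible. By the standard fact in Section~0.4 of Hatcher (Proposition~0.17), the quotient map
\[
q\colon Y \to Y/\operatorname{cone}(K)
\]
is a homotopy equivalence. Moreover, $Y/\operatorname{cone}(K)$ is canonically homeomorphic to $X/K$, so $Y \simeq X/K$. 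This step is purely formal.

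Next I would use the null-homotopy of the inclusion $i\colon K\hookrightarrow X$. Let $H\colon K\times[0,1]\to X$ be a homotopy from $i$ to a constant map at a point $x_0\in X$. By the homotopy extension property of $(X,K)$, extend $H$ to a homotopy $F\colon X\times[0,1]\to X$ with $F_0=\mathrm{id}_X$ and $F_1(K)=\{x_0\}$. Then $F_1$ factors through the quotient $X/K$. A cleaner packaging is the mapping-cone argument: the space $X\cup\operatorname{cone}(K)$ depends, up to homotopy equivalence, only on the homotopy class of the attaching map $i$. This is also in Hatcher, Section~0.4 (Proposition~0.18: homotopic attaching maps give homotopy equivalent spaces, for CW pairs). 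Replace $i$ by the homotopic constant map $c\equiv x_0$. The mapping cone of a constant map $K\to X$ is $X$ with $\operatorname{cone}(K)$ attached by collapsing all of $K\times\{0\}$ to $x_0$. That space is exactly $X\vee(\operatorname{cone}(K)/K)=X\vee\Sigma K$, where $\Sigma K$ here is the reduced or unreduced suspension (these are homotopy equivalent for CW complexes).

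In order, the key steps are:
\begin{enumerate}[(1)]
\item Verify the CW/HEP hypotheses for $(X\sqcup \operatorname{cone}(K), K)$.
\item Invoke the invariance of the adjunction space $X\cup_f \operatorname{cone}(K)$ under homotopy of $f$.
\item Identify the result for the constant map as $X\vee \Sigma K$.
\end{enumerate}

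The main obstacle is step (3), because of the bookkeeping between the simplicial suspension $K\star S^0$ used in the paper and the topological quotient $\operatorname{cone}(K)/K$. I would handle it in two moves. First, $\operatorname{cone}(K)/K$ is the unreduced suspension, which is homeomorphic to $|K\star S^0|$. Second, the wedge point must be a $0$-cell, so the wedge is well defined up to homotopy. I also note that the statement's "$i\colon A\hookrightarrow X$" should read $K$.
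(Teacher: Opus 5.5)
Your argument is correct and is essentially the standard proof in Hatcher's Section~0.4 that the paper cites without reproducing. In that proof one applies Proposition~0.18 to the CW pair $(\operatorname{cone}(K),K)$ with the homotopic attaching maps $i\simeq c$, and notes that attaching $\operatorname{cone}(K)$ along a constant map gives $X\vee(\operatorname{cone}(K)/K)=X\vee\Sigma K$. Your preliminary detour through Proposition~0.17 and $X/K$ is not needed, and you are right that the $A$ in the statement should read $K$.
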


\begin{definition}\label{defi:IC}
    {\rm  The \emph{independence complex} of a simple graph $G$, denoted by $\operatorname{Ind}(G)$, is the simplicial complex whose faces are the independent sets of $G$.}
\end{definition}

\begin{definition}\label{defi:PM}
    {\rm The \emph{perfect matching complex} of a simple graph $G$, denoted by $\mathcal M_p(G)$, is the simplicial complex whose faces are the matchings of $G$ that can be extended to a perfect matching of $G$.}
\end{definition}

\begin{definition}\label{defi:bm}
{\rm Let $G$ be a  simple graph. A matching $M$ of $G$ is called a \emph{bad matching} if $M$ is a minimal matching that cannot be extended to a perfect matching of $G$.}

\end{definition}

Let $G_1 \sqcup G_2$ denote the disjoint union of two graphs, $G_1$ and $G_2$. Then
Ind$(G_1 \sqcup G_2) \simeq$  Ind$(G_1)\  \star$ Ind$(G_2).$ Observe that for a vertex $v\in V(G)$,  the link and the deletion of $v$ in Ind$(G)$ are the same as Ind$(G\setminus N[v])$ and Ind$(G\setminus v)$, respectively. Thus, we have that Ind$(G)$ is same as $(\{v\} \ \star$ Ind$(G\setminus N[v]))$ $\cup$ Ind$(G\setminus v)$. We now recall some known results that will be useful in proving our results.

\begin{proposition}[\cite{a12}]\label{ld}
Let $v$ be a vertex of a graph $G$. If the inclusion {\rm Ind$(G\setminus N[v])\hookrightarrow $ Ind$(G\setminus v)$} is null-homotopic, then we have {\rm $$ \operatorname{Ind}(G) \simeq \operatorname{Ind}(G\setminus v) \vee  \Sigma \operatorname{Ind}(G\setminus N[v]).$$}
\end{proposition}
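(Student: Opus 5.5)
The plan is to recognize $\operatorname{Ind}(G)$ as the space obtained from $\operatorname{Ind}(G\setminus v)$ by attaching a cone along $\operatorname{Ind}(G\setminus N[v])$, and then to apply Proposition~\ref{cone and suspension}. Write $X=\operatorname{Ind}(G\setminus v)$ and $L=\operatorname{Ind}(G\setminus N[v])$. The first step is to check that $L$ is a subcomplex of $X$. A set independent in $G\setminus N[v]$ does not contain $v$ and is independent in $G\setminus v$, since $G\setminus N[v]$ is an induced subgraph of $G\setminus v$.

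The second step is to verify, directly from the definitions, the decomposition already noted in the text:
\[
\operatorname{Ind}(G)=X\cup(\{v\}\star L), \qquad X\cap(\{v\}\star L)=L .
\]
Take an independent set $\sigma$ of $G$. If $v\notin\sigma$, then $\sigma\in X$. If $v\in\sigma$, then $\sigma\setminus\{v\}$ contains no neighbour of $v$, so $\sigma\setminus\{v\}\in L$ and $\sigma\in\{v\}\star L$. Conversely, every face of $\{v\}\star L$ is independent in $G$, because $v$ has no neighbours in $V(G)\setminus N[v]$. For the intersection, a face of $\{v\}\star L$ lying in $X$ cannot contain $v$, so it is a face of $L$. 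Hence $\operatorname{Ind}(G)=X\cup\operatorname{cone}(L)$, with the cone attached along the subcomplex $L\subseteq X$ and the cone point $v$ being new.

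The final step is to invoke Proposition~\ref{cone and suspension}. By hypothesis the inclusion $L\hookrightarrow X$ is null-homotopic, which gives
\[
\operatorname{Ind}(G)\simeq X\vee\Sigma L=\operatorname{Ind}(G\setminus v)\vee\Sigma\operatorname{Ind}(G\setminus N[v]).
\]

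If one wants to justify the Hatcher fact itself, this is where the real content lies, and it is the step I expect to be the main obstacle. The argument would go as follows. Since $(X,L)$ is a CW pair, it has the homotopy extension property. The space $X\cup\operatorname{cone}(L)$ is then the mapping cone of the inclusion $i$, and the homotopy type of a mapping cone depends only on the homotopy class of the attaching map. Replacing $i$ by a constant map $c$ gives $X\cup_c\operatorname{cone}(L)=X\vee(\operatorname{cone}(L)/L)=X\vee\Sigma L$.

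One degenerate case should be checked separately, namely $L=\{\emptyset\}$, which occurs when $N[v]=V(G)$. Then the cone is the single vertex $v$, and $\operatorname{Ind}(G)$ is $X$ together with a disjoint point. With the convention $\Sigma\{\emptyset\}=\mathbb S^0$, this agrees with $X\vee\mathbb S^0$.
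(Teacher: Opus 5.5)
Your proposal is correct and is essentially the argument the paper relies on. The paper simply cites this result from \cite{a12}, but it records exactly the decomposition $\operatorname{Ind}(G)=(\{v\}\star\operatorname{Ind}(G\setminus N[v]))\cup\operatorname{Ind}(G\setminus v)$ together with Proposition~\ref{cone and suspension}, and these combine precisely as you describe. One small correction to your optional justification of the Hatcher fact: the homotopy extension property that makes the homotopy type of $X\cup_f\operatorname{cone}(L)$ depend only on the homotopy class of $f$ is that of the pair $(\operatorname{cone}(L),L)$ (Hatcher, Prop.~0.18), not of $(X,L)$. Both are CW pairs, so nothing breaks.
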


\begin{proposition}[\cite{e08}]\label{easy}
Let $v$ and $w$ be a pair of distinct vertices of $G$ with $N(v)\subseteq N(w)$. Then {\rm Ind$(G) \simeq$ Ind$(G\setminus w)$.}   
\end{proposition}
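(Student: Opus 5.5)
The plan is to decompose $\operatorname{Ind}(G)$ at the vertex $w$ and show that the part of the complex added by $w$ is attached along a contractible subcomplex. By the observation preceding Proposition~\ref{ld},
\[
\operatorname{Ind}(G)=\operatorname{Ind}(G\setminus w)\cup\bigl(\{w\}\star \operatorname{Ind}(G\setminus N[w])\bigr).
\]
The two pieces intersect exactly in $\operatorname{Ind}(G\setminus N[w])$, which is the link of $w$. So $\operatorname{Ind}(G)$ is $\operatorname{Ind}(G\setminus w)$ with a cone glued on along this link.

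The key combinatorial step is to show that $v$ is an isolated vertex of $G\setminus N[w]$.
\begin{itemize}
\item First, $v\notin N(w)$. Otherwise $w\in N(v)\subseteq N(w)$, which contradicts $w\notin N(w)$ for a simple graph.
\item Since also $v\neq w$, we get $v\in V(G\setminus N[w])$.
\item Every neighbour of $v$ lies in $N(v)\subseteq N(w)$, so all of them are deleted in $G\setminus N[w]$. Hence $v$ has no neighbours there.
\end{itemize}
It follows that $\operatorname{Ind}(G\setminus N[w])=\{v\}\star\operatorname{Ind}(G\setminus (N[w]\cup\{v\}))$, which is a cone with apex $v$ and therefore contractible.

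To finish, note that a contractible subcomplex has null-homotopic inclusion into $\operatorname{Ind}(G\setminus w)$. Proposition~\ref{ld} (equivalently Proposition~\ref{cone and suspension}) then gives
\[
\operatorname{Ind}(G)\simeq \operatorname{Ind}(G\setminus w)\vee\Sigma\operatorname{Ind}(G\setminus N[w]).
\]
The suspension of a contractible complex is contractible, and wedging with a contractible space does not change the homotopy type. Hence $\operatorname{Ind}(G)\simeq\operatorname{Ind}(G\setminus w)$.

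Alternatively, one can argue purely combinatorially by explicit elementary collapses. Pair each face $\sigma$ containing $w$ but not $v$ with the face $\sigma\cup\{v\}$; this is a face because $v\notin N(w)$ and $v$ is adjacent to nothing else in $\sigma\setminus\{w\}\subseteq V(G\setminus N[w])$. Performing these collapses in order of decreasing dimension removes every face containing $w$. This exhibits $\operatorname{Ind}(G\setminus w)$ as a deformation retract of $\operatorname{Ind}(G)$.

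The only point needing care is the verification that $v\notin N[w]$, which relies on the graph being simple. Everything else is formal.
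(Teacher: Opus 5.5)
Your proof is correct. The paper gives no proof of this statement; it quotes it from Engstr\"om \cite{e08}. Your core observation is the standard argument for this fold lemma: $v\notin N[w]$ and $N(v)\subseteq N(w)$ make $v$ an isolated vertex of $G\setminus N[w]$, so the link $\operatorname{Ind}(G\setminus N[w])$ of $w$ is a cone with apex $v$.

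Of your two finishes, the collapsing one is the better of the two.
\begin{itemize}
\item It is self-contained and gives a stronger conclusion, namely that $\operatorname{Ind}(G)$ collapses onto $\operatorname{Ind}(G\setminus w)$.
\item The pairing $\sigma\leftrightarrow\sigma\cup\{v\}$ matches all faces containing $w$, and the unmatched faces are exactly those of $\operatorname{Ind}(G\setminus w)$.
\item Processing the pairs level by level in decreasing dimension works. At each stage the remaining faces form a complex in which every face containing $w$ but not $v$ has dimension one less than the top remaining level. Hence each $\sigma$ is a free face of $\sigma\cup\{v\}$ when it is removed, as you claim.
\end{itemize}
The first finish is also valid, with no circularity, since Proposition~\ref{ld} rests only on the cone/suspension argument. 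It is heavier than needed, though. Gluing a cone onto $\operatorname{Ind}(G\setminus w)$ along a contractible subcomplex already leaves the homotopy type unchanged, without passing through the wedge decomposition.
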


The path graph $P_n$ is a graph with the vertex set $V(P_n)=\{v_1,v_2,\dots, v_n\}$ and the edge set $E(P_n)=\{v_iv_{i+1} \ |\ 1\le i\le n-1 \}$, where $n\ge 1$.

\begin{proposition}[\cite{k99}]\label{path}
   For $n\ge 1$,
   $$ \operatorname{ Ind}(P_n)\simeq \begin{cases}
       \mathbb{S}^{k-1} & \text{if } n=3k,3k-1,\\
       \ast & \text{if } n=3k+1.
   \end{cases}$$
\end{proposition}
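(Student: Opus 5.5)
We prove the statement by strong induction on $n$. Each inductive step removes three vertices from the path and suspends the complex once. This matches the formula: replacing $n$ by $n+3$ replaces $k$ by $k+1$, which shifts the sphere dimension by one and keeps contractibility.

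\emph{Base cases $n=1,2,3$.} For $n=1$, $\operatorname{Ind}(P_1)$ is a single vertex, hence contractible, which is the case $n=3\cdot 0+1$. For $n=2$, $\operatorname{Ind}(P_2)$ consists of two isolated vertices, so it is $\mathbb{S}^0$; this is the case $n=3k-1$ with $k=1$. For $n=3$ we have $N(v_1)=\{v_2\}\subseteq N(v_3)=\{v_2\}$. By Proposition~\ref{easy}, $\operatorname{Ind}(P_3)\simeq \operatorname{Ind}(P_3\setminus v_3)=\operatorname{Ind}(P_2)\simeq \mathbb{S}^0$, which is the case $n=3k$ with $k=1$.

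\emph{Inductive step, $n\ge 4$.} Take $v=v_1$ and $w=v_3$ in Proposition~\ref{easy}. Since $N(v_1)=\{v_2\}\subseteq\{v_2,v_4\}=N(v_3)$, we get
$$\operatorname{Ind}(P_n)\simeq \operatorname{Ind}(P_n\setminus v_3).$$
The graph $P_n\setminus v_3$ is the disjoint union of the edge $v_1v_2$, a copy of $P_2$, and the path on $v_4,\dots,v_n$, a copy of $P_{n-3}$. Since $n-3\ge 1$, this second path is nonempty. By the join formula for disjoint unions recalled above,
$$\operatorname{Ind}(P_n)\simeq \operatorname{Ind}(P_2)\star\operatorname{Ind}(P_{n-3})= \mathbb{S}^0\star \operatorname{Ind}(P_{n-3})=\Sigma\operatorname{Ind}(P_{n-3}).$$

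\emph{Applying the induction hypothesis.} If $n=3k$ or $n=3k-1$, then $n-3=3(k-1)$ or $3(k-1)-1$. By induction $\operatorname{Ind}(P_{n-3})\simeq\mathbb{S}^{k-2}$, so $\operatorname{Ind}(P_n)\simeq\Sigma\mathbb{S}^{k-2}\cong\mathbb{S}^{k-1}$. If $n=3k+1$, then $n-3=3(k-1)+1$ and $\operatorname{Ind}(P_{n-3})$ is contractible. The suspension of a contractible complex is contractible, so $\operatorname{Ind}(P_n)\simeq\ast$.

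The only point needing care is the bookkeeping of small cases, so that the induction never reaches the empty graph $P_0$. Its independence complex $\{\emptyset\}$ would play the role of $\mathbb{S}^{-1}$, and we avoid having to adopt that convention. Choosing the three base cases $n=1,2,3$ and requiring $n\ge 4$ in the inductive step guarantees $n-3\ge1$, so every complex appearing in the argument is nonempty. Beyond this, the argument is a direct combination of Proposition~\ref{easy} with the join formula.
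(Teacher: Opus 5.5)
Your proof is correct. The paper does not prove this statement itself; it quotes it from Kozlov \cite{k99}, so there is no internal argument to compare against.

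What you give is a self-contained derivation that uses only tools the paper already recalls. You apply the fold lemma (Proposition~\ref{easy}) to the pair $v_1,v_3$, then the join formula for disjoint unions, to get $\operatorname{Ind}(P_n)\simeq\Sigma\operatorname{Ind}(P_{n-3})$. This is exactly the maneuver the paper uses in the proof of Lemma~\ref{lemma:H}, where it deletes $b_{n-2}$ via $N(a_n)\subset N(b_{n-2})$ and splits off a copy of $P_3$. So your argument would make that part of the paper independent of the citation, at the cost of a few lines.

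Your choice of base cases $n=1,2,3$ with $n\ge 4$ in the inductive step is sound. It covers every $n\ge1$ exactly once and never invokes $P_0$. The only facts you use implicitly are that unreduced suspension preserves homotopy equivalences (hence contractibility) and that $\Sigma\mathbb{S}^{m}\cong\mathbb{S}^{m+1}$; both are standard.
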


The cycle graph $C_n$ is a graph with the vertex set $V(C_n)=\{v_1,v_2,\dots, v_n\}$ and the edge set $E(C_n)=\{v_iv_{i+1} \ |\ 1\le i\le n-1 \} \cup \{v_1v_n\}$, where $n\ge 3$.

\begin{proposition}[\cite{k99}]\label{cycle}
   For $n\ge 3$,
   $$ \operatorname{ Ind}(C_n)\simeq \begin{cases}
       \mathbb{S}^{k-1} \vee \mathbb{S}^{k-1} & \text{if } n=3k,\\
       \mathbb{S}^{k-1} & \text{if } n=3k\pm 1.
   \end{cases}$$
\end{proposition}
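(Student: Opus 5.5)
We argue by a single link--deletion step at one vertex of $C_n$, feeding in Proposition~\ref{path} for the resulting paths and using Proposition~\ref{ld}. Write the vertices as $v_1,\dots,v_n$ and split at $v=v_n$. Then $C_n\setminus v_n$ is the path $v_1v_2\cdots v_{n-1}\cong P_{n-1}$. Also $C_n\setminus N[v_n]$ is the path $v_2\cdots v_{n-2}\cong P_{n-3}$. We use the convention that $\operatorname{Ind}$ of the empty graph is $\{\emptyset\}\simeq\mathbb{S}^{-1}$ and $\operatorname{Ind}(P_0)=\mathbb{S}^{-1}$, so that $\Sigma\mathbb{S}^{-1}=\mathbb{S}^0$. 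With this, $\operatorname{Ind}(C_n)=(\{v_n\}\star\operatorname{Ind}(P_{n-3}))\cup\operatorname{Ind}(P_{n-1})$. If the inclusion $\operatorname{Ind}(P_{n-3})\hookrightarrow\operatorname{Ind}(P_{n-1})$ is null-homotopic, Proposition~\ref{ld} gives
\[
\operatorname{Ind}(C_n)\simeq\operatorname{Ind}(P_{n-1})\vee\Sigma\operatorname{Ind}(P_{n-3}).
\]

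The key observation for null-homotopy is that the inclusion factors through an intermediate complex. The vertices $v_2,\dots,v_{n-2}$ avoid $v_1$, so
\[
\operatorname{Ind}(P_{n-3})\subseteq\operatorname{Ind}(C_n\setminus\{v_n,v_1\})=\operatorname{Ind}(v_2\cdots v_{n-1})\cong\operatorname{Ind}(P_{n-2})\subseteq\operatorname{Ind}(P_{n-1}).
\]
Hence the inclusion is null-homotopic as soon as one of $\operatorname{Ind}(P_{n-3})$, $\operatorname{Ind}(P_{n-2})$, $\operatorname{Ind}(P_{n-1})$ is contractible. Among three consecutive path lengths, exactly one is $\equiv 1 \pmod 3$. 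By Proposition~\ref{path} that one has contractible independence complex, so the hypothesis of Proposition~\ref{ld} always holds.

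Now we read off the three cases from Proposition~\ref{path}.
\begin{enumerate}[$(i)$]
\item For $n=3k$: $\operatorname{Ind}(P_{3k-1})\simeq\mathbb{S}^{k-1}$ and $\operatorname{Ind}(P_{3k-3})\simeq\mathbb{S}^{k-2}$. This gives $\mathbb{S}^{k-1}\vee\Sigma\mathbb{S}^{k-2}=\mathbb{S}^{k-1}\vee\mathbb{S}^{k-1}$.
\item For $n=3k+1$: $\operatorname{Ind}(P_{3k})\simeq\mathbb{S}^{k-1}$ and $\operatorname{Ind}(P_{3k-2})\simeq\ast$. This gives $\mathbb{S}^{k-1}$.
\item For $n=3k-1$: $\operatorname{Ind}(P_{3k-2})\simeq\ast$ and $\operatorname{Ind}(P_{3k-4})\simeq\mathbb{S}^{k-2}$, since $3k-4=3(k-1)-1$. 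This gives $\Sigma\mathbb{S}^{k-2}=\mathbb{S}^{k-1}$.
\end{enumerate}
Together these cover $n=3k$ and $n=3k\pm1$.

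The only delicate point is the boundary cases $n=3,4,5$, where $P_{n-3}$ has $0$, $1$ or $2$ vertices. I would verify these directly: $\operatorname{Ind}(C_3)$ is three points, $\operatorname{Ind}(C_4)$ is two disjoint edges, and $\operatorname{Ind}(C_5)$ is a $5$-cycle. Alternatively one can check that the empty-graph convention keeps the argument valid there; for $n=3$ it gives $\operatorname{Ind}(P_2)\vee\Sigma\mathbb{S}^{-1}=\mathbb{S}^0\vee\mathbb{S}^0$, which is three points.
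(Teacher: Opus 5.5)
Your proof is correct. The paper does not prove this proposition: it quotes it from \cite{k99} and uses it as a black box, just as it does Proposition~\ref{path}.

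Your argument derives the cycle case from the path case with a single application of Proposition~\ref{ld} at $v_n$. This is the same link--deletion machinery the paper later applies to $M_{2n}$ and $\mathcal C_{2n}$, so it makes the section more self-contained at essentially no cost. It is a reduction to Proposition~\ref{path}, not a proof from scratch.

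The factorization
\[
\operatorname{Ind}(P_{n-3})\subseteq\operatorname{Ind}(P_{n-2})\subseteq\operatorname{Ind}(P_{n-1})
\]
gives a clean, uniform certificate of null-homotopy. It only does real work when $n=3k$. For $n=3k+1$ the link $\operatorname{Ind}(P_{n-3})$ is already contractible, and for $n=3k-1$ the deletion $\operatorname{Ind}(P_{n-1})$ is. Even for $n=3k$ a dimension argument would do: any map $\mathbb{S}^{k-2}\to\mathbb{S}^{k-1}$ is null-homotopic. That is essentially how the paper justifies the hypothesis of Proposition~\ref{ld} in Theorems~\ref{IC} and~\ref{ICCL}. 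Your version has the advantage of not needing the homotopy types of the link and deletion in advance.

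The small cases are handled correctly, both through the convention $\operatorname{Ind}(\emptyset)=\{\emptyset\}$ with $\Sigma\{\emptyset\}=\mathbb{S}^0$ and by your direct checks for $n=3,4,5$. Three consecutive path lengths starting at $n-3\ge 0$ always contain a length $\equiv 1 \pmod 3$ that is at least $1$, so Proposition~\ref{path} applies to it.
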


We use the following result from~\cite{b95} to study perfect matching complexes.

\begin{proposition}[\cite{b95}]\label{prop:bjorner}
Let $\Delta=\Delta_0\cup \Delta_1\cup \cdots \cup \Delta_n$ be a simplicial complex with subcomplexes $\Delta_i$, and assume that $\Delta_i\cap \Delta_j\subseteq \Delta_0$ for all $1\le i<j\le n$. 
\begin{enumerate}[(i)]
\item  If $\Delta_i$ is contractible for all $1\le i\le n$, then $$\Delta\simeq \Delta_0 \cup \bigcup_{1\le i\le n} \operatorname{cone} (\Delta_0\cap \Delta_i).$$

    \item If $\Delta_i$ is contractible for all $0\le i\le n$, then $$\Delta\simeq \vee_{1\le i\le n} \Sigma (\Delta_0\cap \Delta_i).$$

\end{enumerate}

\end{proposition}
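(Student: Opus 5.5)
We prove part $(i)$ by attaching the pieces $\Delta_1,\dots,\Delta_n$ to $\Delta_0$ one at a time, each replaced by a cone. Part $(ii)$ then follows by collapsing the contractible subcomplex $\Delta_0$.

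Write $A_i=\Delta_0\cap\Delta_i$ for $1\le i\le n$. The hypothesis $\Delta_i\cap\Delta_j\subseteq\Delta_0$ for $i\neq j$ gives
\[
\Delta_i\cap\Bigl(\Delta_0\cup\bigcup_{j\neq i}\Delta_j\Bigr)=A_i .
\]
So $\Delta$ is obtained from $\Delta_0$ by successively gluing each $\Delta_i$ along the subcomplex $A_i$, and these gluings do not interact. Set $X_k=\Delta_0\cup\Delta_1\cup\cdots\cup\Delta_k$ and $Y_k=\Delta_0\cup\bigcup_{i\le k}\operatorname{cone}(A_i)$, where each cone is taken on a new apex vertex. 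We will show by induction on $k$ that $X_k\simeq Y_k$, via homotopy equivalences that restrict to the identity on $\Delta_0$.

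For the inductive step, fix $i$. The pair $(\Delta_i,A_i)$ is a CW pair with $\Delta_i$ contractible. Hence the inclusion $A_i\hookrightarrow\Delta_i$ is null-homotopic, and we can build a map $f_i\colon\operatorname{cone}(A_i)\to\Delta_i$ that is the identity on $A_i$: send the cone lines to a null-homotopy of the inclusion. Since both spaces are contractible, $f_i$ is a homotopy equivalence, and it restricts to the identity on the common subspace $A_i$.

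We then apply the gluing theorem for adjunction spaces (Hatcher, Section 0.4, Proposition 0.18 and its corollaries; equivalently R.~Brown's gluing theorem). It says that if $(X,A)$ and $(X',A)$ are CW pairs and $f\colon X'\to X$ is a homotopy equivalence that restricts to the identity on $A$, then for any $B\supseteq A$ the spaces $B\cup_A X'$ and $B\cup_A X$ are homotopy equivalent. Taking $B=X_{k-1}$ (or $Y_{k-1}$ after the previous replacements), $X'=\operatorname{cone}(A_k)$ and $X=\Delta_k$, we obtain $X_k\simeq Y_k$. After $n$ steps this gives $(i)$.

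For $(ii)$, assume in addition that $\Delta_0$ is contractible. By $(i)$, $\Delta\simeq Y=\Delta_0\cup\bigcup_i\operatorname{cone}(A_i)$. Now $\Delta_0$ is a contractible subcomplex of $Y$, so by Hatcher, Proposition 0.17, the quotient map $Y\to Y/\Delta_0$ is a homotopy equivalence. In $Y/\Delta_0$ each $A_i$ is collapsed to the same point, so
\[
Y/\Delta_0=\bigvee_{1\le i\le n}\operatorname{cone}(A_i)/A_i\cong\bigvee_{1\le i\le n}\Sigma A_i .
\]
Here the suspension is the reduced one, which agrees up to homotopy with the unreduced suspension for CW complexes. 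When $A_i=\emptyset$, the convention $\Sigma\emptyset=\mathbb{S}^0$ makes the formula consistent.

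The main obstacle is justifying the replacement of $\Delta_i$ by $\operatorname{cone}(A_i)$ relative to $A_i$. The key point is that a homotopy equivalence fixing $A_i$ pointwise glues to a global homotopy equivalence. This rests on the homotopy extension property of CW pairs, which holds here because every space involved is a simplicial complex and every gluing is along a subcomplex.
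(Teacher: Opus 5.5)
The paper does not prove this proposition. It quotes it from Bj\"orner's survey and uses it as a black box, so there is no argument in the paper to compare against. Your proposal is a correct, self-contained proof along the standard lines. Since $\Delta_i$ meets the rest of $\Delta$ exactly in $A_i=\Delta_0\cap\Delta_i$, each contractible $\Delta_i$ can be exchanged for $\operatorname{cone}(A_i)$ relative to $A_i$. For $(ii)$, collapsing the contractible subcomplex $\Delta_0$ (Hatcher, Proposition~0.17) leaves $\bigvee_i \operatorname{cone}(A_i)/A_i$.

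A few points of presentation are worth fixing; none is a gap.

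\textbf{Citation.} The Hatcher result you actually need is Proposition~0.19, not 0.18. It says: if $(X,A)$ and $(X',A)$ have the homotopy extension property and $f\colon X'\to X$ is a homotopy equivalence with $f|_A=\mathrm{id}$, then $f$ is a homotopy equivalence rel $A$. The rel-$A$ inverse and the rel-$A$ homotopies then extend by the identity over $B$.

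\textbf{Induction step.} As written, one gluing with $B=X_{k-1}$ only gives $X_k\simeq X_{k-1}\cup_{A_k}\operatorname{cone}(A_k)$. Reaching $Y_k$ needs a second gluing along the inductive equivalence $X_{k-1}\simeq Y_{k-1}$. To carry the property ``identity on $\Delta_0$'' forward, you need the gluing theorem in the sharper form that $\mathrm{id}_B\cup f$ is itself a homotopy equivalence, not merely that the two spaces are homotopy equivalent.

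It is cleaner to do all replacements at once. Both $\operatorname{cone}(A_i)\cap\operatorname{cone}(A_j)$ and $\Delta_i\cap\Delta_j$ lie in $A_i\cap A_j$. On that set every $f_i$ and every rel-$A_i$ inverse $g_i$ is the identity, and every rel-$A_i$ homotopy is stationary. So $\mathrm{id}_{\Delta_0}\cup\bigcup_i f_i$, $\mathrm{id}_{\Delta_0}\cup\bigcup_i g_i$ and the corresponding homotopies glue directly, with no induction.

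\textbf{Suspension.} $\operatorname{cone}(A_i)/A_i$ is homeomorphic to the \emph{unreduced} suspension: its two suspension points are the apex and the image of $A_i$. This is exactly the paper's $\Sigma A_i=A_i\star\{u,v\}$. Your remark that it is the reduced suspension is inaccurate, though harmless for the homotopy type.
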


\section{M\"obius and Circular Ladder Graphs}\label{defi}
 Let us first review the definition of the M\"obius ladder graph. It has an even number of vertices. Let us denote this graph by $M_{2n}$, where $2n$ is the number of vertices (Figure \ref{fig:ML}). The vertex set $V(M_{2n})= \{v_1,v_2,\dots, v_{2n}\}$ and the edge set $$E(M_{2n})=\{v_iv_{i+1},\  v_1v_{2n},\ v_jv_{j+n}\ |\ 1\le i\le 2n-1,\ 1\le j\le n\}.$$

\begin{figure}[ht]
\tikzstyle{vert}=[circle, draw, fill=black!100, inner sep=0pt, minimum width=3pt]
\tikzstyle{ver}=[]
\tikzstyle{extra}=[circle, draw, fill=black!50, inner sep=0pt, minimum width=4pt]
\tikzstyle{edge} = [draw,thick,-]
\centering
\begin{tikzpicture}[scale=1]

\foreach \x/\y/\z in {5/-1/0,5/1/1,3/-1/2,3/1/3,1/-1/4,1/1/5,-1/-1/6,-1/1/7,-3/-1/8,-3/1/9,-5/-1/10,-5/1/11}
{\node[vert] (a\z) at (\x,\y){};}

\foreach \x/\y in {2.5/-1,2/-1,1.5/-1,2.5/1,2/1,1.5/1}{\node[extra] () at (\x,\y){};}

\foreach \x/\y in {a0/a2,a6/a4,a8/a6,a1/a3,a7/a5,a9/a7,a0/a1,a2/a3,a4/a5,a6/a7,a8/a9,a0/a11,a1/a10,a10/a11,a8/a10,a9/a11}
{\draw [edge]  (\x) -- (\y);}

\foreach \x/\y/\z in {4/-1.3/c_{n-1},4/1.3/b_{n-1},0/-1.3/c_3,0/1.3/b_3,-2/-1.3/c_2,-2/1.3/b_2,-4/-1.3/c_1,-4/1.3/b_1,-5.3/0/a_1,-3.3/0/a_2,-1.3/0/a_3,1.3/0/a_4,3.5/0/a_{n-1},5.3/0/a_n, 1.8/0.6/d_2, 1.8/-0.6/d_1}
{\node[ver] () at (\x,\y){\small{$ \z$}};}

\node[ver] () at (5.25,-1.5){\small{$ v_{2n}$}};
\node[ver] () at (5.25,1.5){\small{$ v_n$}};
\node[ver] () at (3,-1.5){\small{$ v_{2n-1}$}};
\node[ver] () at (3,1.5){\small{$ v_{n-1}$}};
\node[ver] () at (1,-1.5){\small{$ v_{n+4}$}};
\node[ver] () at (1,1.5){\small{$ v_4$}};
\node[ver] () at (-1,-1.5){\small{$ v_{n+3}$}};
\node[ver] () at (-1,1.5){\small{$ v_3$}};
\node[ver] () at (-3,-1.5){\small{$ v_{n+2}$}};
\node[ver] () at (-3,1.5){\small{$ v_2$}};
\node[ver] () at (-5.25,1.5){\small{$ v_1$}};
\node[ver] () at (-5.25,-1.5){\small{$ v_{n+1}$}};


\end{tikzpicture}
\caption{M\"obius ladder graph $M_{2n}$ with $2n$ vertices.}\label{fig:ML}
\end{figure}

\begin{figure}[ht]
\tikzstyle{vert}=[circle, draw, fill=black!100, inner sep=0pt, minimum width=3pt]
\tikzstyle{ver}=[]
\tikzstyle{extra}=[circle, draw, fill=black!50, inner sep=0pt, minimum width=4pt]
\tikzstyle{edge} = [draw,thick,-]
\centering
\begin{tikzpicture}[scale=.64]

\begin{scope}[shift={(-10,0)}]
\foreach \x/\y/\z in {5/-1/0,5/1/1,3/-1/2,3/1/3,1/-1/4,1/1/5,-1/-1/6,-1/1/7,-3/-1/8,-3/1/9,-5/-1/10}
{\node[vert] (a\z) at (\x,\y){};}
\foreach \x/\y in {2.5/-1,2/-1,1.5/-1,2.5/1,2/1,1.5/1}{\node[extra] () at (\x,\y){};}

\foreach \x/\y in {a0/a2,a6/a4,a8/a6,a1/a3,a7/a5,a9/a7,a0/a1,a2/a3,a4/a5,a6/a7,a8/a9,a1/a10,a8/a10}
{\draw [edge]  (\x) -- (\y);}

\node[ver] () at (5.25,-1.5){\small{$ v_{2n}$}};
\node[ver] () at (5.25,1.5){\small{$ v_n$}};
\node[ver] () at (3,-1.5){\small{$ v_{2n-1}$}};
\node[ver] () at (3,1.5){\small{$ v_{n-1}$}};
\node[ver] () at (1,-1.5){\small{$ v_{n+4}$}};
\node[ver] () at (1,1.5){\small{$ v_4$}};
\node[ver] () at (-1,-1.5){\small{$ v_{n+3}$}};
\node[ver] () at (-1,1.5){\small{$ v_3$}};
\node[ver] () at (-3,-1.5){\small{$ v_{n+2}$}};
\node[ver] () at (-3,1.5){\small{$ v_2$}};

\node[ver] () at (-5.25,-1.5){\small{$ v_{n+1}$}};

\node[ver] () at (0,-2.5){(a)};
\end{scope}

\begin{scope}[shift={(3,0)}]
\foreach \x/\y/\z in {5/1/1,3/-1/2,3/1/3,1/-1/4,1/1/5,-1/-1/6,-1/1/7,-3/-1/8}
{\node[vert] (a\z) at (\x,\y){};}
\foreach \x/\y in {2.5/-1,2/-1,1.5/-1,2.5/1,2/1,1.5/1}{\node[extra] () at (\x,\y){};}

\foreach \x/\y in {a6/a4,a8/a6,a1/a3,a7/a5,a2/a3,a4/a5,a6/a7}
{\draw [edge]  (\x) -- (\y);}

\node[ver] () at (5.25,1.5){\small{$ v_n$}};
\node[ver] () at (3,-1.5){\small{$ v_{2n-1}$}};
\node[ver] () at (3,1.5){\small{$ v_{n-1}$}};
\node[ver] () at (1,-1.5){\small{$ v_{n+4}$}};
\node[ver] () at (1,1.5){\small{$ v_4$}};
\node[ver] () at (-1,-1.5){\small{$ v_{n+3}$}};
\node[ver] () at (-1,1.5){\small{$ v_3$}};
\node[ver] () at (-3,-1.5){\small{$ v_{n+2}$}};

\node[ver] () at (0,-2.5){(b)};
\end{scope}

\end{tikzpicture}
\caption{(a) $M_{2n}\setminus v_1$, (b) $M_{2n}\setminus N[v_1]$.}\label{fig:LD}
\end{figure}

Let us now review the definition of the circular ladder graph. It has an even number of vertices. Let us denote this graph by $\mathcal C_{2n}$, where $2n$ is the number of vertices (Figure \ref{fig:CL}). The vertex set $V(\mathcal C_{2n})= \{v_1 ,v_2 ,\dots, v_{2n} \}$ and the edge set $$E(\mathcal C_{2n})=\{v_i  v_{i+1} ,\  v_1  v_{n} ,\ v_{j}  v_{j+1} , \ v_{n+1}  v_{2n} ,\ v_k  v_{k+n} \ |\ 1\le i\le n-1,\ n+1\le j\le 2n-1, \ 1\le k\le n\}.$$ Note that $\mathcal C_{2n}$ is cartesian product of the $n$-cycle $C_n$ and the path $P_2$.

\begin{figure}[ht]
\tikzstyle{vert}=[circle, draw, fill=black!100, inner sep=0pt, minimum width=3pt]
\tikzstyle{ver}=[]
\tikzstyle{extra}=[circle, draw, fill=black!50, inner sep=0pt, minimum width=4pt]
\tikzstyle{edge} = [draw,thick,-]
\centering
\begin{tikzpicture}[scale=1]

\foreach \x/\y/\z in {5/-1/0,5/1/1,3/-1/2,3/1/3,1/-1/4,1/1/5,-1/-1/6,-1/1/7,-3/-1/8,-3/1/9,-5/-1/10,-5/1/11}
{\node[vert] (a\z) at (\x,\y){};}

\foreach \x/\y in {2.5/-1,2/-1,1.5/-1,2.5/1,2/1,1.5/1}{\node[extra] () at (\x,\y){};}

\foreach \x/\y in {a0/a2,a6/a4,a8/a6,a1/a3,a7/a5,a9/a7,a0/a1,a2/a3,a4/a5,a6/a7,a8/a9,a10/a11,a8/a10,a9/a11}
{\draw [edge]  (\x) -- (\y);}

\foreach \x/\y/\z in {4/-1.3/c_{n-1},4/1.3/b_{n-1},0/-1.3/c_3,0/1.3/b_3,-2/-1.3/c_2,-2/1.3/b_2,-4/-1.3/c_1,-4/1.3/b_1,-5.3/0/a_1,-3.3/0/a_2,-1.3/0/a_3,1.3/0/a_4,3.5/0/a_{n-1},5.3/0/a_n, 0/-2.3/c_n, 0/2.3/b_n}
{\node[ver] () at (\x,\y){\small{$ \z $}};}

\foreach \x/\y/\z/\w in {a11/a1/0/2,a10/a0/0/-2}
{\draw [edge] plot [smooth,tension=1.5] coordinates{(\x) (\z,\w) (\y)};}

\node[ver] () at (5.25,-1.5){\small{$ v_{2n}$}};
\node[ver] () at (5.25,1.5){\small{$ v_n$}};
\node[ver] () at (3,-1.5){\small{$ v_{2n-1}$}};
\node[ver] () at (3,1.5){\small{$ v_{n-1}$}};
\node[ver] () at (1,-1.5){\small{$ v_{n+4}$}};
\node[ver] () at (1,1.5){\small{$ v_4$}};
\node[ver] () at (-1,-1.5){\small{$ v_{n+3}$}};
\node[ver] () at (-1,1.5){\small{$ v_3$}};
\node[ver] () at (-3,-1.5){\small{$ v_{n+2}$}};
\node[ver] () at (-3,1.5){\small{$ v_2$}};
\node[ver] () at (-5.25,1.5){\small{$ v_1$}};
\node[ver] () at (-5.25,-1.5){\small{$ v_{n+1}$}};


\end{tikzpicture}
\caption{Circular ladder graph $\mathcal C_{2n}$ with $2n$ vertices.}\label{fig:CL}
\end{figure}

\begin{figure}[ht]
\tikzstyle{vert}=[circle, draw, fill=black!100, inner sep=0pt, minimum width=3pt]
\tikzstyle{ver}=[]
\tikzstyle{extra}=[circle, draw, fill=black!50, inner sep=0pt, minimum width=4pt]
\tikzstyle{edge} = [draw,thick,-]
\centering
\begin{tikzpicture}[scale=.64]

\begin{scope}[shift={(-10,0)}]
\foreach \x/\y/\z in {5/-1/0,5/1/1,3/-1/2,3/1/3,1/-1/4,1/1/5,-1/-1/6,-1/1/7,-3/-1/8,-3/1/9,-5/-1/10}
{\node[vert] (a\z) at (\x,\y){};}
\foreach \x/\y in {2.5/-1,2/-1,1.5/-1,2.5/1,2/1,1.5/1}{\node[extra] () at (\x,\y){};}

\foreach \x/\y in {a0/a2,a6/a4,a8/a6,a1/a3,a7/a5,a9/a7,a0/a1,a2/a3,a4/a5,a6/a7,a8/a9,a8/a10}
{\draw [edge]  (\x) -- (\y);}

\node[ver] () at (5.25,-1.5){\small{$ v_{2n}$}};
\node[ver] () at (5.25,1.5){\small{$ v_n$}};
\node[ver] () at (3,-1.5){\small{$ v_{2n-1}$}};
\node[ver] () at (3,1.5){\small{$ v_{n-1}$}};
\node[ver] () at (1,-1.5){\small{$ v_{n+4}$}};
\node[ver] () at (1,1.5){\small{$ v_4$}};
\node[ver] () at (-1,-1.5){\small{$ v_{n+3}$}};
\node[ver] () at (-1,1.5){\small{$ v_3$}};
\node[ver] () at (-3,-1.5){\small{$ v_{n+2}$}};
\node[ver] () at (-3,1.5){\small{$ v_2$}};

\node[ver] () at (-5.25,-1.5){\small{$ v_{n+1}$}};

\foreach \x/\y/\z/\w in {a10/a0/0/-2.1}
{\draw [edge] plot [smooth,tension=1.5] coordinates{(\x) (\z,\w) (\y)};}

\node[ver] () at (0,-2.8){(a)};
\end{scope}

\begin{scope}[shift={(3,0)}]
\foreach \x/\y/\z in {5/-1/0,3/-1/2,3/1/3,1/-1/4,1/1/5,-1/-1/6,-1/1/7,-3/-1/8}
{\node[vert] (a\z) at (\x,\y){};}
\foreach \x/\y in {2.5/-1,2/-1,1.5/-1,2.5/1,2/1,1.5/1}{\node[extra] () at (\x,\y){};}

\foreach \x/\y in {a6/a4,a8/a6,a0/a2,a7/a5,a2/a3,a4/a5,a6/a7}
{\draw [edge]  (\x) -- (\y);}

\node[ver] () at (5.25,-1.5){\small{$ v_{2n}$}};
\node[ver] () at (3,-1.5){\small{$ v_{2n-1}$}};
\node[ver] () at (3,1.5){\small{$ v_{n-1}$}};
\node[ver] () at (1,-1.5){\small{$ v_{n+4}$}};
\node[ver] () at (1,1.5){\small{$ v_4$}};
\node[ver] () at (-1,-1.5){\small{$ v_{n+3}$}};
\node[ver] () at (-1,1.5){\small{$ v_3$}};
\node[ver] () at (-3,-1.5){\small{$ v_{n+2}$}};

\node[ver] () at (0,-2.5){(b)};
\end{scope}

\end{tikzpicture}
\caption{(a) $\mathcal C_{2n}\setminus v_1$, (b) $\mathcal C_{2n}\setminus N[v_1]$.}\label{fig:CLD}
\end{figure}

\section{Independence Complex}\label{Section:3}

In this section, we study homotopy types of the independence complexes of the M\"obius and circular ladder graphs. Let us first study the homotopy type of the independence complex of the M\"obius ladder graph.

\subsection{Independence complex of M\"obius ladder graph}

\begin{lemma}\label{deletion}
For $n\ge 2$,
$$ \operatorname{ Ind}(M_{2n}\setminus v_1)\simeq \begin{cases}
       \mathbb{S}^{2k-1} & \text{if } n=4k, \\ 
       \ast & \text{if } n=4k+1, \\ 
       \mathbb{S}^{2k} \vee \mathbb{S}^{2k} & \text{if } n=4k+2, \\ 
        
       \mathbb{S}^{2k} & \text{if } n=4k+3.
   \end{cases}$$
    
\end{lemma}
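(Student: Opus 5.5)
The plan is to work with the explicit structure of $M_{2n}\setminus v_1$. Removing $v_1$ leaves a ladder $L$ with $n-1$ rungs. Its top row is $v_2,\dots,v_n$, its bottom row is $v_{n+2},\dots,v_{2n}$, and its rungs are $v_jv_{j+n}$ for $2\le j\le n$. There is one extra vertex $x=v_{n+1}$, whose neighbourhood is $N(x)=\{v_n,v_{n+2}\}$. So $x$ joins the top-right corner of $L$ to the bottom-left corner. Write $L_m=P_2\times P_m$ for the ladder with $m$ rungs and $H_m$ for $L_m$ together with such an extra vertex. Then $M_{2n}\setminus v_1=H_{n-1}$, and the lemma becomes a statement about $\operatorname{Ind}(H_m)$ for $m=n-1$.

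\textbf{Step 1 (ladders).} First I compute $\operatorname{Ind}(L_m)$ using Proposition~\ref{easy}. Let $a_1,b_1$ be the first rung and $a_2,b_2$ the second, with $a_1a_2$ and $b_1b_2$ edges of the rows.
\begin{itemize}
\item Since $N(a_1)=\{b_1,a_2\}\subseteq N(b_2)$, we may delete $b_2$.
\item Now $N(b_1)=\{a_1\}\subseteq N(a_2)$, so we may delete $a_2$.
\item What remains is the isolated edge $a_1b_1$ disjoint from a copy of $L_{m-2}$.
\end{itemize}
Hence $\operatorname{Ind}(L_m)\simeq \Sigma\operatorname{Ind}(L_{m-2})$. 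With $\operatorname{Ind}(L_1)\simeq\operatorname{Ind}(L_2)\simeq\mathbb S^0$ this gives $\operatorname{Ind}(L_m)\simeq \mathbb S^{\lceil m/2\rceil-1}$. The same two folds work for a ladder with one or two corners removed, and reduce it to a path $P_j$ or a shorter ladder. So Proposition~\ref{path} and the recursion above determine the independence complexes of all these truncated ladders.

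\textbf{Step 2 (link and deletion at $x$).} Next I split $\operatorname{Ind}(H_m)$ at $x$.
\begin{itemize}
\item The deletion is $\operatorname{Ind}(H_m\setminus x)=\operatorname{Ind}(L_m)\simeq\mathbb S^{\lceil m/2\rceil-1}$.
\item The link is $\operatorname{Ind}(H_m\setminus N[x])$, where $H_m\setminus N[x]$ is $L_m$ with two opposite corners removed. Folding from the two ends as in Step 1 computes it as a sphere or a point. I expect it to be contractible or a sphere whose dimension depends on $m \bmod 4$, because the two truncated ends interact differently according to the parity of $m$ and of $\lceil m/2\rceil$.
\end{itemize}
In the cases where the link is contractible, $\operatorname{Ind}(H_m)\simeq\operatorname{Ind}(L_m)$ directly. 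In the remaining cases I apply Proposition~\ref{ld}. This gives $\operatorname{Ind}(H_m)\simeq \mathbb S^{\lceil m/2\rceil-1}\vee\Sigma(\text{link})$, which should produce the $\mathbb S^{2k}\vee\mathbb S^{2k}$ for $n=4k+2$ and the single spheres or point in the other residues. The point for $n=4k+1$ should come out as a suspension of the sphere cancelling the deletion's sphere. There I would instead fold $x$ away directly: since $N(x)\subseteq N(w)$ for a suitable vertex $w$ after the first folds at the left end, the graph collapses to a tree-like path whose independence complex is contractible by Proposition~\ref{path}.

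\textbf{Main obstacle.} The main obstacle is verifying that the inclusion $\operatorname{Ind}(H_m\setminus N[x])\hookrightarrow\operatorname{Ind}(L_m)$ is null-homotopic when both are spheres of the same dimension. When the link's dimension is smaller, null-homotopy is automatic. In the equal-dimension case I would try first to exhibit a vertex $w$ of $L_m$ such that the image lies in the star of $w$ in $\operatorname{Ind}(L_m)$, making the inclusion factor through a cone. Failing that, I would avoid the issue by choosing a different vertex to split on, for instance a corner of $L$ adjacent to $x$, or by first performing the Step 1 folds at the end of $L$ away from $x$. That would give a recursion $\operatorname{Ind}(H_m)\simeq\Sigma\operatorname{Ind}(H'_{m-2})$ for a twisted variant $H'$, iterated twice to get period $4$, with the small cases $n=2,3,4,5$ checked by hand.
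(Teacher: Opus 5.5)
The reduction to the ladder $L_m$ ($m=n-1$) plus $x=v_{n+1}$, and your computation $\operatorname{Ind}(L_m)\simeq\mathbb{S}^{\lceil m/2\rceil-1}$, are correct. The argument breaks down for $n=4k+1$.

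The link graph is $H_m\setminus N[x]=L_m\setminus\{v_n,v_{n+2}\}$, a ladder with pendants at opposite corners. Each round of folds from one end switches ``opposite corners'' and ``same side'', which is where the period $4$ comes from. This graph is $M_{2n}\setminus N[v_{n+1}]\cong M_{2n}\setminus N[v_1]$, so its values are those of Lemma~\ref{link}: $\ast$ for $n\equiv 0,3\pmod 4$ and $\mathbb{S}^{2k-1}$ for $n=4k+1,4k+2$.

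So $n=4k+1$ is the only case where link and deletion are spheres of the same dimension, both $\mathbb{S}^{2k-1}$. There the inclusion is \emph{not} null-homotopic. If it were, Proposition~\ref{ld} would give $\mathbb{S}^{2k-1}\vee\mathbb{S}^{2k}$, whereas the lemma asserts contractibility. In fact the inclusion is a homotopy equivalence, and contractibility cannot come from any wedge decomposition; ``a suspension cancelling the deletion's sphere'' is not a mechanism. Your ``main obstacle'' is therefore an attempt to prove a false statement, in exactly the one case where it arises.

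Your substitutes do not work as stated.
\begin{itemize}
\item For $n\ge 4$, no vertex other than $x$ is adjacent to both $v_n$ and $v_{n+2}$. The first fold at the left end ($N(v_2)\subseteq N(v_{n+3})$, deleting $v_{n+3}$) leaves $N(x)=\{v_n,v_{n+2}\}$ untouched, so $x$ never becomes foldable.
\item There is no end of $L$ ``away from $x$'': $x$ touches a corner at each end. The second fold of your Step~1 fails at both ends; for example, after deleting $v_{n+3}$ one has $N(v_{n+2})=\{v_2,v_{n+1}\}\not\subseteq N(v_3)$.
\end{itemize}

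The missing idea, which is the paper's proof, is to keep folding along a zigzag: $N(v_{n+4})\subseteq N(v_5)$, then $N(v_6)\subseteq N(v_{n+7})$, and so on, deleting $v_{n+3},v_5,v_{n+7},v_9,\dots$. For $n=4k+1$ the chain ends with $N(v_{2n-1})\subseteq N(v_n)$. This deletes $v_n$, turns $x$ into a leaf, and leaves the path $P_{6k+1}$, which is contractible by Proposition~\ref{path}. In the other residues the chain leaves $P_{6k-1}$, $C_{6k+3}$ or $C_{6k+4}$, so no null-homotopy question ever arises.

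Your vaguer option of splitting at a corner adjacent to $x$ can also be made to work. At $v_{n+2}$, the link is a ladder with $m-2$ rungs and one pendant, which is contractible for $m$ even. The deletion folds to the suspension of an opposite-pendant ladder with $m-3$ rungs, which is contractible when $m-3\equiv 1\pmod 4$. But as written none of this is carried out, so the case $n\equiv 1\pmod 4$ is not proved.
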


\begin{proof}
In $M_{2n}\setminus v_1$, consider $v_2$, and observe that $N(v_2)\subset N(v_{n+3})$. Thus, Ind$(M_{2n}\setminus v_1)\simeq$ Ind$(M_{2n}\setminus \{v_1,v_{n+3}\})$ due to Proposition \ref{easy}. In $M_{2n}\setminus \{v_1,v_{n+3}\}$, $N(v_{n+4})\subset N(v_5)$. Applying Proposition \ref{easy} on $M_{2n}\setminus \{v_1,v_{n+3}\}$, we get that Ind$(M_{2n}\setminus v_1)\simeq$ Ind$(M_{2n}\setminus \{v_1,v_{n+3}\})\simeq$ Ind$(M_{2n}\setminus \{v_1,v_{n+3},v_5\})$. Therefore, applying Proposition \ref{easy} iteratively, we finally get that $$\operatorname{Ind}(M_{2n}\setminus v_1)\simeq \begin{cases}
       \operatorname{Ind}(M_{2n}\setminus \{v_1,v_{n+3},v_5,v_{n+7},\dots, v_{n-3}, v_{2n-1}\}) & \text{if } n=4k, \\ 
       \operatorname{Ind}(M_{2n}\setminus \{v_1,v_{n+3},v_5,v_{n+7},\dots, v_{n-4}, v_{2n-2}, v_n\}) & \text{if } n=4k+1, \\ 
       \operatorname{Ind}(M_{2n}\setminus \{v_1,v_{n+3},v_5,v_{n+7},\dots, v_{n-5}, v_{2n-3}, v_{n-1}\}) & \text{if } n=4k+2, \\ 
        
       \operatorname{Ind}(M_{2n}\setminus \{v_1,v_{n+3},v_5,v_{n+7},\dots, v_{n-6}, v_{2n-4}, v_{n-2},v_{2n}\}) & \text{if } n=4k+3.
   \end{cases}$$

For $n=4k$, note that $N(v_{2n})=\{v_n\}\subset N(v_{n-1})$ in $M_{2n}\setminus \{v_1,v_{n+3},v_5,v_{n+7},\dots, v_{n-3}, v_{2n-1}\}$. Therefore, $\operatorname{Ind}(M_{2n}\setminus v_1)=
       \operatorname{Ind}(M_{2n}\setminus \{v_1,v_{n+3},v_5,v_{n+7},\dots, v_{n-3}, v_{2n-1},v_{n-1}\})$ for $n=4k$. Observe that $M_{2n}\setminus \{v_1,v_{n+3},v_5,v_{n+7},\dots, v_{n-3}, v_{2n-1},v_{n-1}\}$ is a path with endpoints $v_{n-2}$ and $v_{2n}$. Since $M_{2n}$ has $2n=8k$ vertices and the cardinality of the set $ \{v_1,v_{n+3},v_5,v_{n+7},\dots, v_{n-3}, v_{2n-1},\\v_{n-1}\}$ is $2k+1$, $M_{2n}\setminus \{v_1,v_{n+3},v_5,v_{n+7},\dots, v_{n-3}, v_{2n-1},v_{n-1}\}$ is the path $P_{6k-1}$.

       For $n=4k+1$, $M_{2n}\setminus \{v_1,v_{n+3},v_5,v_{n+7},\dots, v_{n-4}, v_{2n-2}, v_n\}$ is a path with endpoints $v_{n+1}$ and $v_{2n}$. Since the cardinality of the set $\{v_1,v_{n+3},v_5,v_{n+7},\dots, v_{n-4}, v_{2n-2}, v_n\}$ is $2k+1$, $M_{2n}\setminus \{v_1,v_{n+3},v_5,v_{n+7},\dots, v_{n-4}, v_{2n-2}, v_n\}$ is the path $P_{6k+1}$.

       For $n=4k+2$, $M_{2n}\setminus \{v_1,v_{n+3},v_5,v_{n+7},\dots, v_{n-5}, v_{2n-3}, v_{n-1}\}$ is a cycle with $2(4k+2)-2k-1=6k+3$ vertices. Therefore, $M_{2n}\setminus \{v_1,v_{n+3},v_5,v_{n+7},\dots, v_{n-5}, v_{2n-3}, v_{n-1}\}$ is the cycle $C_{6k+3}$.

       For $n=4k+3$, $M_{2n}\setminus \{v_1,v_{n+3},v_5,v_{n+7},\dots, v_{n-6}, v_{2n-4}, v_{n-2},v_{2n}\}$ is a cycle with $2(4k+3)-2k-2=6k+4$ vertices. Therefore, $M_{2n}\setminus \{v_1,v_{n+3},v_5,v_{n+7},\dots, v_{n-6}, v_{2n-4}, v_{n-2},v_{2n}\}$ is the cycle $C_{6k+4}$. In summary, we get the following,

    $$\operatorname{Ind}(M_{2n}\setminus v_1)\simeq \begin{cases}
       \operatorname{Ind}(P_{6k-1}) & \text{if } n=4k, \\ 
       \operatorname{Ind}(P_{6k+1}) & \text{if } n=4k+1, \\ 
       \operatorname{Ind}(C_{3(2k+1)}) & \text{if } n=4k+2, \\ 
        
       \operatorname{Ind}(C_{3(2k+1)+1}) & \text{if } n=4k+3.
   \end{cases}$$   
Now, the result follows from Propositions \ref{path} and \ref{cycle}.
\end{proof}

\begin{lemma}\label{link}
For $n\ge 2$,
$$ \operatorname{ Ind}(M_{2n}\setminus N[v_1])\simeq \begin{cases}
       \ast & \text{if } n=4k,4k+3, \\ 
        
       \mathbb{S}^{2k-1} & \text{if } n=4k+1,4k+2.
   \end{cases}$$
    
\end{lemma}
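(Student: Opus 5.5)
The plan is to mirror the proof of Lemma \ref{deletion}: use Proposition \ref{easy} repeatedly to fold $M_{2n}\setminus N[v_1]$ down to a path, and then read off the answer from Proposition \ref{path}.

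First I describe the graph. Since $N[v_1]=\{v_1,v_2,v_{2n},v_{n+1}\}$, the graph $M_{2n}\setminus N[v_1]$ (Figure \ref{fig:LD}(b)) is a ladder.
\begin{itemize}
\item Its top rail is the path $v_3v_4\cdots v_n$ and its bottom rail is the path $v_{n+2}v_{n+3}\cdots v_{2n-1}$.
\item Its rungs are $v_jv_{j+n}$ for $3\le j\le n-1$.
\item The vertex $v_n$ is a leaf attached to $v_{n-1}$, because its partners $v_{n+1}$ and $v_{2n}$ were deleted.
\item The vertex $v_{n+2}$ is a leaf attached to $v_{n+3}$, because its partner $v_2$ was deleted.
\end{itemize}
The two leaves sit at opposite ends and on opposite rails; this twist is what distinguishes the graph from the circular case.

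Next I fold starting from the leaf $v_{n+2}$. Since $N(v_{n+2})=\{v_{n+3}\}\subseteq N(v_3)$, Proposition \ref{easy} lets me delete $v_3$. Now $v_{n+2}$ is still a leaf, and $N(v_{n+2})\subseteq N(v_{n+4})$, so I delete $v_{n+4}$. After that, $v_4$ becomes a leaf attached to $v_5$, so $N(v_4)\subseteq N(v_{n+6})$ and I delete $v_{n+6}$. Continuing in this way, the deleted vertices alternate between the rails with period $4$ in the column index, in the same pattern $v_1,v_{n+3},v_5,v_{n+7},\dots$ as in Lemma \ref{deletion}, shifted by two. After every deletion some vertex has become a leaf, so Proposition \ref{easy} applies again. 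The process continues until it reaches the right end, where the leaf $v_n$ (attached to $v_{n-1}$) supplies one or two final foldings, for example $N(v_n)\subseteq N(v_{2n-1})$.

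At that point the remaining graph should be a path $P_m$, with its endpoints determined by $n \bmod 4$. I will count the vertices: the graph has $2n-4$ vertices, and I subtract roughly $n/2$ deleted vertices, treating the four residues $n=4k,4k+1,4k+2,4k+3$ separately as in Lemma \ref{deletion}. The target is
\begin{itemize}
\item $m\equiv 1 \pmod 3$ when $n=4k$ or $n=4k+3$, which gives a contractible complex;
\item $m\in\{3k-1,3k\}$, with the appropriate $k$, when $n=4k+1$ or $n=4k+2$.
\end{itemize}
For example, I expect $m=6k-1$ or $m=6k$ in the latter cases, giving $\mathbb{S}^{2k-1}$ by Proposition \ref{path}.

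The main obstacle is the bookkeeping at the right-hand end. There, the periodic folding pattern meets the pendant vertex $v_n$, and whether the last folding deletes one vertex or two depends on $n \bmod 4$. If that deletion is handled wrongly, the result could be a tree with a branch rather than a path. I would verify the four boundary cases by hand on the small instances $n=4,5,6,7$, and also $n=2,3$ as degenerate checks, before writing the general count. I would also confirm that the final graph is a path and not a disjoint union; if it were a disjoint union, I would use $\operatorname{Ind}(G_1\sqcup G_2)\simeq \operatorname{Ind}(G_1)\star\operatorname{Ind}(G_2)$ instead.
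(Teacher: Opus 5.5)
Your plan is the paper's: fold with Proposition~\ref{easy}, then read off the answer from Proposition~\ref{path}. Your description of $M_{2n}\setminus N[v_1]$ is also correct. However, the folding sequence you actually write down breaks at the third step. After $v_3$ and $v_{n+4}$ are removed, $N(v_4)=\{v_5\}$, but $N(v_{n+6})=\{v_{n+5},v_{n+7},v_6\}$ does not contain $v_5$. So $N(v_4)\subseteq N(v_{n+6})$ is false, and the only vertices $v_4$ lets you delete are the other neighbours of $v_5$, namely $v_6$ and $v_{n+5}$.

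Your first two steps also contradict the claim that a path will remain. Once $v_3$ and $v_{n+4}$ are gone, $v_{n+3}$ is adjacent only to $v_{n+2}$, so the edge $v_{n+2}v_{n+3}$ is a separate component. Nor is $v_3,v_{n+4},v_{n+6},\dots$ the ``shifted'' pattern $v_3,v_{n+5},v_7,v_{n+9},\dots$ you say you are following. That pattern is valid, via $N(v_4)\subseteq N(v_{n+5})$, $N(v_{n+6})\subseteq N(v_7)$, \dots, and it produces a zigzag path away from the right end. But those steps fold degree-two vertices, not leaves, contrary to your remark that a leaf appears after every deletion.

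The larger gap is that the actual content of the lemma is only announced as a target inferred from the answer (``I expect $m=6k-1$ or $m=6k$''). That content is: for each residue of $n$ modulo $4$, exactly which vertices are deleted, what graph remains at the right end, and how many vertices it has. The end state need not even be a path. In the paper's reduction for $n=4k+3$, $v_n$ becomes an isolated vertex, and that is where contractibility comes from.

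The paper carries this out with the deletions $v_{n+4},v_6,v_{n+8},v_{10},\dots$, using $N(v_3)\subseteq N(v_{n+4})$, $N(v_{n+5})\subseteq N(v_6)$, \dots. It obtains $P_{6k-2}$, $P_{6k-1}$, $P_{6k}$ for $n=4k,4k+1,4k+2$.

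If you want to keep your leaf-based start, accept the disconnection instead:
\begin{enumerate}
\item delete $v_6,v_{n+5}$ to split off the edge $v_4v_5$;
\item then delete $v_7,v_{n+8}$ to split off $v_{n+6}v_{n+7}$;
\item continue in the same way.
\end{enumerate}
Each split-off edge contributes a join factor $\operatorname{Ind}(P_2)\simeq\mathbb{S}^0$. Checking the right end gives exactly $2k$ such factors and nothing else for $n=4k+1,4k+2$, and an isolated vertex for $n=4k,4k+3$. Either way, that residue-by-residue verification is the proof, and it must be carried out rather than projected.
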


\begin{proof}
In $(M_{2n}\setminus N[v_1])$, consider $v_3$ and observe that $N(v_3)\subset N(v_{n+4})$. Thus, Ind$(M_{2n}\setminus N[v_1])\simeq$ Ind$(M_{2n}\setminus (N[v_1]\cup\{v_{n+4}\}))$ due to Proposition \ref{easy}. In $M_{2n}\setminus (N[v_1]\cup\{v_{n+4}\})$, $N(v_{n+5})\subset N(v_6)$. Applying Proposition \ref{easy} on $M_{2n}\setminus (N[v_1]\cup\{v_{n+4}\})$, we get that Ind$(M_{2n}\setminus N[v_1])\simeq$ Ind$(M_{2n}\setminus (N[v_1]\cup\{v_{n+4}\}))\simeq$ Ind$(M_{2n}\setminus (N[v_1]\cup\{v_{n+4}, v_6\}))$. Therefore, applying Proposition \ref{easy} iteratively, we finally get that $$\operatorname{Ind}(M_{2n}\setminus N[v_1])\simeq \begin{cases}
       \operatorname{Ind}(M_{2n}\setminus (N[v_1]\cup \{v_{n+4},v_6,v_{n+8},\dots, v_{2n-4}, v_{n-2}\})) & \text{if } n=4k, \\ 
       \operatorname{Ind}(M_{2n}\setminus (N[v_1]\cup \{v_{n+4},v_6,v_{n+8},\dots, v_{n-3}, v_{2n-1}\})) & \text{if } n=4k+1, \\ 
       \operatorname{Ind}(M_{2n}\setminus (N[v_1]\cup \{v_{n+4},v_6,v_{n+8},\dots, v_{2n-2}, v_{n}\})) & \text{if } n=4k+2, \\ 
        
       \operatorname{Ind}(M_{2n}\setminus (N[v_1]\cup \{v_{n+4},v_6,v_{n+8},\dots, v_{2n-3}, v_{n-1}\})) & \text{if } n=4k+3.
   \end{cases}$$

 For $n=4k$, $M_{2n}\setminus (N[v_1]\cup \{v_{n+4},v_6,v_{n+8},\dots, v_{2n-4}, v_{n-2}\})$ is a path with endpoints $v_{n+2}$ and $v_{n}$. Since the cardinality of the set $N[v_1]\cup \{v_{n+4},v_6,v_{n+8},\dots, v_{2n-4}, v_{n-2}\}$ is $4+2(k-1)=2k+2$, $M_{2n}\setminus (N[v_1]\cup \{v_{n+4},v_6,v_{n+8},\dots, v_{2n-4}, v_{n-2}\})$ is the path $P_{6k-2}=P_{3(2k-1)+1}$.

 For $n=4k+1$, $M_{2n}\setminus (N[v_1]\cup \{v_{n+4},v_6,v_{n+8},\dots, v_{n-3}, v_{2n-1}\})$ is a path with endpoints $v_{n+2}$ and $v_{n}$. Since the cardinality of the set $N[v_1]\cup \{v_{n+4},v_6,v_{n+8},\dots, v_{n-3}, v_{2n-1}\}$ is $4+2(k-1)+1=2k+3$, $M_{2n}\setminus (N[v_1]\cup \{v_{n+4},v_6,v_{n+8},\dots, v_{n-3}, v_{2n-1}\})$ is the path $P_{6k-1}$.

  For $n=4k+2$, $M_{2n}\setminus (N[v_1]\cup \{v_{n+4},v_6,v_{n+8},\dots, v_{2n-2}, v_n\})$ is a path with endpoints $v_{n+2}$ and $v_{2n-1}$. Since the cardinality of the set $N[v_1]\cup \{v_{n+4},v_6,v_{n+8},\dots,  v_{2n-2}, v_n\}$ is $4+2(k-1)+2=2k+4$, $M_{2n}\setminus (N[v_1]\cup \{v_{n+4},v_6,v_{n+8},\dots, v_{2n-2}, v_n\})$ is the path $P_{6k}$.

For $n=4k+3$, note that $v_n$ is an isolated vertex in $M_{2n}\setminus (N[v_1]\cup \{v_{n+4},v_6,v_{n+8},\dots, v_{2n-3},\\ v_{n-1}\})$. Therefore, Ind$(M_{2n}\setminus N[v_1])$ is contractible.  In summary, we get the following,

    $$\operatorname{Ind}(M_{2n}\setminus N[v_1])\simeq \begin{cases}
       \operatorname{Ind}(P_{3(2k-1)+1}) & \text{if } n=4k, \\ 
       \operatorname{Ind}(P_{6k-1}) & \text{if } n=4k+1, \\ 
       \operatorname{Ind}(P_{6k}) & \text{if } n=4k+2, \\ 
        
       \ast & \text{if } n=4k+3.
   \end{cases}$$   
Now, the result follows from Proposition \ref{path}.
\end{proof}

\begin{theorem}\label{IC}
For $n\ge 2$,
$$ \operatorname{ Ind}(M_{2n})\simeq \begin{cases}
       \mathbb{S}^{2k-1} & \text{if } n=4k, \\ \bigvee_3  \mathbb{S}^{2k} & \text{if } n=4k+2 \\ \mathbb{S}^{2k} & \text{if } n=4k+1,4k+3.
   \end{cases}$$
    
\end{theorem}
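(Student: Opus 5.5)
We apply the link--deletion decomposition at the vertex $v_1$. As noted in Section~\ref{pre}, $\operatorname{Ind}(M_{2n})=(\{v_1\}\star\operatorname{Ind}(M_{2n}\setminus N[v_1]))\cup\operatorname{Ind}(M_{2n}\setminus v_1)$. By Proposition~\ref{ld}, it suffices to show that the inclusion $\operatorname{Ind}(M_{2n}\setminus N[v_1])\hookrightarrow\operatorname{Ind}(M_{2n}\setminus v_1)$ is null-homotopic. Once that is established, we obtain
\[
\operatorname{Ind}(M_{2n})\simeq\operatorname{Ind}(M_{2n}\setminus v_1)\vee\Sigma\operatorname{Ind}(M_{2n}\setminus N[v_1]),
\]
and Lemmas~\ref{deletion} and~\ref{link} supply both pieces.

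The null-homotopy is established case by case.
\begin{itemize}
\item For $n=4k$ and $n=4k+3$, Lemma~\ref{link} says the link is contractible, so any map out of it is null-homotopic.
\item For $n=4k+1$, Lemma~\ref{deletion} says the deletion is contractible, so any map into it is null-homotopic.
\item For $n=4k+2$, the link is $\simeq\mathbb{S}^{2k-1}$ and the deletion is $\simeq\mathbb{S}^{2k}\vee\mathbb{S}^{2k}$, which is $(2k-1)$-connected. A map from a complex of dimension $2k-1$ (up to homotopy) into a $(2k-1)$-connected space is null-homotopic by cellular approximation. Equivalently, $[\mathbb{S}^{2k-1},\mathbb{S}^{2k}\vee\mathbb{S}^{2k}]=\pi_{2k-1}=0$, which also covers the case $k=0$, where the deletion is connected. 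Here I take the homotopy equivalences of Lemma~\ref{link} at face value, with the link being homotopy equivalent to a CW complex of dimension $2k-1$.
\end{itemize}

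The wedge formula then yields the four cases.
\begin{itemize}
\item $n=4k$: $\mathbb{S}^{2k-1}\vee\Sigma(\ast)\simeq\mathbb{S}^{2k-1}$.
\item $n=4k+1$: $\ast\vee\Sigma\mathbb{S}^{2k-1}\simeq\mathbb{S}^{2k}$.
\item $n=4k+2$: $(\mathbb{S}^{2k}\vee\mathbb{S}^{2k})\vee\Sigma\mathbb{S}^{2k-1}\simeq\bigvee_3\mathbb{S}^{2k}$.
\item $n=4k+3$: $\mathbb{S}^{2k}\vee\ast\simeq\mathbb{S}^{2k}$.
\end{itemize}

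The main obstacle is the boundary cases with small $n$, where the index conventions in the lemmas degenerate. When $k=0$ we have $\mathbb{S}^{-1}=\emptyset$, and the suspension of the empty complex is $\mathbb{S}^0$; this is consistent for $n=2$, giving three points. For $n=2,3$, if the lemmas' iterated fold arguments are unclear, I would verify directly. $M_4=K_4$ gives $\operatorname{Ind}=4$ points $=\bigvee_3\mathbb{S}^0$. $M_6=K_{3,3}$ gives two disjoint triangles $\simeq\mathbb{S}^0$.
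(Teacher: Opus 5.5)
Your proposal is correct and is essentially the paper's proof: link--deletion at $v_1$ via Proposition~\ref{ld} together with Lemmas~\ref{deletion} and~\ref{link}, except that you justify the null-homotopy for $n=4k+2$ by a connectivity argument, a step the paper only asserts. One harmless slip: for $k=0$ the deletion $\operatorname{Ind}(K_3)$ is three points, not a connected space; this does not matter, since there the link is empty and you verify $n=2$ directly anyway.
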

\begin{proof}
    Consider $M_{2n}\setminus v_1$ and $M_{2n}\setminus N[v_1]$. Due to Lemma \ref{link}, {\rm Ind}$(M_{2n}\setminus N[v_1])\simeq \ast$ for $n=4k$. Thus, Proposition \ref{ld} can be applied on $M_{2n}$ w.r.t. $v_1$ for $n=4k$. Now the result follows from Lemma \ref{deletion} for $n=4k$.

    For $n=4k+2$, Ind$(M_{2n}\setminus N[v_1])\simeq \mathbb{S}^{2k-1}$ by Lemma \ref{link} and Ind$(M_{2n}\setminus v_1)\simeq \mathbb{S}^{2k}\vee \mathbb{S}^{2k}$ by Lemma \ref{deletion}. Thus, Proposition \ref{ld} can be applied on $M_{2n}$ w.r.t. $v_1$ for $n=4k+2$. Hence, the result for $n$ even follows. The result for $n$ odd also follows from Lemma \ref{deletion}, \ref{link} and Proposition \ref{ld}.
    \end{proof}

\subsection{Independence complex of circular ladder graph}
Let us now study the homotopy type of the independence complex of the circular ladder graph.

\begin{lemma}\label{deletionCL}
For $n\ge 2$,
$$ \operatorname{ Ind}(\mathcal C_{2n}\setminus v_1)\simeq \begin{cases}
       \mathbb{S}^{2k-1}\vee \mathbb{S}^{2k-1} & \text{if } n=4k, \\ 
      \mathbb{S}^{2k-1} & \text{if } n=4k+1, \\ 
       \mathbb{S}^{2k}  & \text{if } n=4k+2, \\ 
        
       \ast & \text{if } n=4k+3.
   \end{cases}$$
   \end{lemma}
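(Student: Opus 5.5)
The plan is to follow the proof of Lemma \ref{deletion}: repeatedly apply Proposition \ref{easy} to reduce $\mathcal C_{2n}\setminus v_1$ to a path or a cycle, and then read off the homotopy type from Propositions \ref{path} and \ref{cycle}.

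\emph{Start of the chain.} In $\mathcal C_{2n}\setminus v_1$ the vertex $v_2$ has $N(v_2)=\{v_3,v_{n+2}\}$. This is contained in $N(v_{n+3})=\{v_3,v_{n+2},v_{n+4}\}$, so Proposition \ref{easy} lets us delete $v_{n+3}$. In the resulting graph $N(v_{n+4})=\{v_4,v_{n+5}\}\subseteq N(v_5)$, so we may delete $v_5$. Next $N(v_6)\subseteq N(v_{n+7})$, so we delete $v_{n+7}$, and so on. The chain of deletions alternates between the two cycles of the ladder with period $4$:
$$v_{n+3},\ v_5,\ v_{n+7},\ v_9,\ \dots$$
Each deleted vertex has become dominated because its two predecessors along the ladder have degree $2$. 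Throughout, the remaining graph stays a "zig-zag" strip: the two rails with every other rung cut. Concretely, the surviving vertices up to the current position form a single path running $v_{n+1},v_{n+2},v_2,v_3,v_4,v_{n+4},v_{n+5},v_{n+6},v_6,\dots$.

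\emph{End of the chain.} The iteration stops when it reaches the end of the ladder, where $v_n$ has lost its neighbour $v_1$ and $v_{2n}$ is still attached to $v_{n+1}$. The behaviour there depends on $n \bmod 4$, and I would split into the four cases exactly as in Lemma \ref{deletion}, listing the deleted set explicitly in each case. Possibly one more application of Proposition \ref{easy} is needed at the end, for instance using a degree-one vertex such as $v_n$ or $v_{2n}$.

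The expected outcomes are determined by counting: the number of vertices left is $2n-1$ minus the number of folded vertices, which is roughly $n/2$.
\begin{itemize}
\item For $n=4k$: about $2k$ vertices are removed in total, including $v_1$, and the wraparound edge $v_{n+1}v_{2n}$ closes the strip into a cycle $C_{6k}$. Proposition \ref{cycle} then gives $\mathbb S^{2k-1}\vee\mathbb S^{2k-1}$.
\item For $n=4k+1$: the result should be $C_{6k\pm1}$ or the path $P_{6k}$ (or $P_{6k-1}$), giving $\mathbb S^{2k-1}$.
\item For $n=4k+2$: the result should be $C_{6k+2}$ or $C_{6k+4}$, or a path $P_{6k+2}$ or $P_{6k+3}$, giving $\mathbb S^{2k}$.
\item For $n=4k+3$: we expect either an isolated vertex to appear or a path $P_{3m+1}$ to remain, so that Proposition \ref{path} gives contractibility.
\end{itemize}
Small cases with $k=0$, such as $n=2,3$, should be checked directly.

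\emph{Main obstacle.} The key step is to track precisely which vertex is folded last near $v_n, v_{2n-1}, v_{2n}$ in each residue class, since the fold pattern of period $4$ meets the wraparound edges $v_nv_1$ (now deleted) and $v_{n+1}v_{2n}$. I would first write out $n=4,5,6,7$ by hand to pin down the terminal deleted vertices. Then I would verify the general pattern and count the remaining vertices, using the fact that the deleted set has cardinality $1+\lfloor (n-2)/2\rfloor$ (up to boundary corrections). Finally I would confirm whether the survivor is a path or a cycle by checking whether $v_{n+1}v_{2n}$ joins the two ends of the zig-zag path.
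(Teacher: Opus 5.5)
Your strategy is the paper's: fold along the chain $v_{n+3},v_5,v_{n+7},v_9,\ldots$ (top positions $\equiv 1$, bottom positions $\equiv 3 \pmod 4$). Your description of the surviving zig-zag path is correct. However, the proposal stops exactly where the content of the lemma lies. Except for $n=4k$, you never determine which graph remains. Instead you list every path or cycle whose independence complex has the target homotopy type, which reads the answer backwards rather than deriving it. Your count $1+\lfloor (n-2)/2\rfloor$ of deleted vertices is also off. For odd $n$ the chain removes $(n+1)/2$ vertices (including $v_1$), and for $n\equiv 2\pmod 4$ one extra fold is needed.

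What must actually be checked is how the chain terminates in each residue class.
\begin{itemize}
\item For $n=4k$ it ends with $v_{n-3}$ and $v_{2n-1}$. Then $N(v_{2n})=\{v_n,v_{n+1}\}$, and the edge $v_{2n}v_{n+1}$ closes the strip into $C_{6k}$.
\item For $n=4k+1$, once $v_{2n-2}$ is gone, $N(v_{2n-1})=\{v_{n-1},v_{2n}\}=N(v_n)$. So the last fold deletes $v_n$ itself, and the result is the cycle $C_{6k+1}$, not $C_{6k-1}$ or a path.
\item For $n=4k+2$ the chain ends with $v_{n-1}$, leaving $v_n$ as a leaf on $v_{2n}$. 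Since $\{v_{2n}\}\subseteq N(v_{n+1})$, one folds $v_{n+1}$ and obtains the path $P_{6k+2}$ from $v_{n+2}$ to $v_n$.
\item For $n=4k+3$ the chain ends by deleting $v_{2n}$, which is dominated via $N(v_{n-1})=\{v_n,v_{2n-1}\}$. This cuts the wraparound and leaves $P_{6k+4}=P_{3(2k+1)+1}$; no isolated vertex arises.
\end{itemize}
These descriptions also cover $n=2,3,4,5$, where the relevant early vertex ($v_{n-1}$, $v_{n-2}$, $v_{n-3}$, or $v_{2n-2}$) is $v_1$ or $v_{n+3}$. With these four computations, giving vertex counts $6k,\,6k+1,\,6k+2,\,6k+4$, Propositions \ref{path} and \ref{cycle} yield the lemma. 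This is precisely what the paper's proof records.
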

   
\begin{proof}
   In $\mathcal C_{2n}\setminus v_1$, consider $v_2$, and observe that $N(v_2)\subset N(v_{n+3})$. Thus, Ind$(\mathcal C_{2n}\setminus v_1)\simeq$ Ind$(\mathcal C_{2n}\setminus \{v_1,v_{n+3}\})$ due to Proposition \ref{easy}. In $\mathcal C_{2n}\setminus \{v_1,v_{n+3}\}$, $N(v_{n+4})\subset N(v_5)$. Applying Proposition \ref{easy} on $\mathcal C_{2n}\setminus \{v_1,v_{n+3}\}$, we get that Ind$(\mathcal C_{2n}\setminus v_1)\simeq$ Ind$(\mathcal C_{2n}\setminus \{v_1,v_{n+3}\})\simeq$ Ind$(\mathcal C_{2n}\setminus \{v_1,v_{n+3},v_5\})$. Therefore, applying Proposition \ref{easy} iteratively, we finally get that $$\operatorname{Ind}(\mathcal C_{2n}\setminus v_1)\simeq \begin{cases}
       \operatorname{Ind}(\mathcal C_{2n}\setminus \{v_1,v_{n+3},v_5,v_{n+7},\dots, v_{n-3}, v_{2n-1}\}) & \text{if } n=4k, \\ 
       \operatorname{Ind}(\mathcal C_{2n}\setminus \{v_1,v_{n+3},v_5,v_{n+7},\dots, v_{n-4}, v_{2n-2}, v_n\}) & \text{if } n=4k+1, \\ 
       \operatorname{Ind}(\mathcal C_{2n}\setminus \{v_1,v_{n+3},v_5,v_{n+7},\dots, v_{n-5}, v_{2n-3}, v_{n-1}\}) & \text{if } n=4k+2, \\ 
        
       \operatorname{Ind}(\mathcal C_{2n}\setminus \{v_1,v_{n+3},v_5,v_{n+7},\dots, v_{n-6}, v_{2n-4}, v_{n-2},v_{2n}\}) & \text{if } n=4k+3.
   \end{cases}$$

   For $n=4k$, $\mathcal C_{2n}\setminus \{v_1,v_{n+3},v_5,v_{n+7},\dots, v_{n-3}, v_{2n-1}\}$ is a cycle with $2(4k)-2k=6k$ vertices. Therefore, $\mathcal C_{2n}\setminus \{v_1,v_{n+3},v_5,v_{n+7},\dots, v_{n-3}, v_{2n-1}\}$ is the cycle $C_{6k}$.

    For $n=4k+1$, $\mathcal C_{2n}\setminus \{v_1,v_{n+3},v_5,v_{n+7},\dots, v_{n-4}, v_{2n-2}, v_n\}$ is a cycle with $2(4k+1)-2k-1=6k+1$ vertices. Therefore, $\mathcal C_{2n}\setminus \{v_1,v_{n+3},v_5,v_{n+7},\dots, v_{n-4}, v_{2n-2}, v_n\}$ is the cycle $C_{6k+1}$.

For $n=4k+2$, note that $N(v_{n})=\{v_{2n}\}\subset N(v_{n+1})$ in $\mathcal C_{2n}\setminus \{v_1,v_{n+3},v_5,v_{n+7},\dots, v_{n-5},\\ v_{2n-3}, v_{n-1}\}$. Therefore, $\operatorname{Ind}(\mathcal C_{2n}\setminus v_1)=
       \operatorname{Ind}(\mathcal C_{2n}\setminus \{v_1,v_{n+3},v_5,v_{n+7},\dots, v_{n-5}, v_{2n-3},v_{n-1}, v_{n+1}\})$ for $n=4k+2$. Observe that $\mathcal C_{2n}\setminus \{v_1,v_{n+3},v_5,v_{n+7},\dots, v_{n-5}, v_{2n-3},v_{n-1}, v_{n+1}\}$ is a path with endpoints $v_{n+2}$ and $v_{n}$. Since $\mathcal C_{2n}$ has $2n=8k+4$ vertices and the cardinality of the set $ \{v_1,v_{n+3},v_5,v_{n+7},\dots, v_{n-5}, v_{2n-3},v_{n-1}, v_{n+1}\}$ is $2k+2$, $\mathcal C_{2n}\setminus \{v_1,v_{n+3},v_5,v_{n+7},\dots, v_{n-5}, v_{2n-3},\\v_{n-1}, v_{n+1}\}$ is the path $P_{6k+2}$.

       For $n=4k+3$, $\mathcal C_{2n}\setminus \{v_1,v_{n+3},v_5,v_{n+7},\dots, v_{n-6}, v_{2n-4}, v_{n-2},v_{2n}\}$ is a path with endpoints $v_{n+1}$ and $v_{n}$. Since the cardinality of the set $\{v_1,v_{n+3},v_5,v_{n+7},\dots, v_{n-6}, v_{2n-4}, v_{n-2},v_{2n}\}$ is $2k+2$, $\mathcal C_{2n}\setminus \{v_1,v_{n+3},v_5,v_{n+7},\dots, v_{n-6}, v_{2n-4}, v_{n-2},v_{2n}\}$ is the path $P_{6k+4}$. In summary, we get the following,

    $$\operatorname{Ind}(\mathcal C_{2n}\setminus v_1)\simeq \begin{cases}
       \operatorname{Ind}(C_{6k}) & \text{if } n=4k, \\ 
       \operatorname{Ind}(C_{6k+1}) & \text{if } n=4k+1, \\ 
       \operatorname{Ind}(P_{(3(2k+1)-1)}) & \text{if } n=4k+2, \\ 
        
       \operatorname{Ind}(P_{3(2k+1)+1}) & \text{if } n=4k+3.
   \end{cases}$$   
Now, the result follows from Propositions \ref{path} and \ref{cycle}.
\end{proof}

   \begin{lemma}\label{linkCL}
For $n\ge 2$,
$$ \operatorname{ Ind}(\mathcal C_{2n}\setminus N[v_1])\simeq \begin{cases}
       \mathbb{S}^{2k-2} & \text{if } n=4k, \\ 
        
        \ast & \text{if } n=4k+1,4k+2, \\
        
        \mathbb{S}^{2k} & \text{if } n=4k+3.
   \end{cases}$$
    
\end{lemma}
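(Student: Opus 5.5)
The plan is to follow the proof of Lemma \ref{link} closely: strip vertices with Proposition \ref{easy} until only a path, or a graph with an isolated vertex, is left, and then read off the answer from Proposition \ref{path}. In $\mathcal C_{2n}$ we have $N[v_1]=\{v_1,v_2,v_n,v_{n+1}\}$. So $\mathcal C_{2n}\setminus N[v_1]$ consists of the top path $v_3v_4\cdots v_{n-1}$ and the bottom path $v_{n+2}v_{n+3}\cdots v_{2n}$, joined by the rungs $v_jv_{j+n}$ for $3\le j\le n-1$. The vertex $v_{2n}$ has lost its rung partner $v_n$, so it hangs only from $v_{2n-1}$. The small cases $n=2,3$ are checked by hand. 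For $n=2$ the remaining graph is the single vertex $v_4$, so the complex is a point. For $n=3$ it is the edge $v_5v_6$, giving $\mathbb{S}^0$. Both agree with the statement for $k=0$.

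\textbf{Step 1: the removal chain.} In $\mathcal C_{2n}\setminus N[v_1]$ we have $N(v_3)=\{v_4,v_{n+3}\}\subseteq N(v_{n+4})=\{v_4,v_{n+3},v_{n+5}\}$. Hence Proposition \ref{easy} lets us delete $v_{n+4}$. After that, $N(v_{n+5})=\{v_5,v_{n+6}\}\subseteq N(v_6)$, so we delete $v_6$. Then $N(v_7)\subseteq N(v_{n+8})$, so we delete $v_{n+8}$, and so on. The same zig-zag pattern $v_{n+4},v_6,v_{n+8},v_{10},\dots$ as in Lemma \ref{link} continues until it reaches the right end of the ladder. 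At each step the graph looks locally like a ladder with a pendant corner, so the inclusion of neighbourhoods is checked exactly as in the M\"obius case. The only difference is at the right end: the two rails no longer cross, and $v_{2n}$ has no rung.

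\textbf{Step 2: the right end, split by $n \bmod 4$.} Here I determine which vertex is the last one removed and what graph remains. I expect the following outcomes, with possibly one extra application of Proposition \ref{easy} at a leaf, as in Lemma \ref{deletion}.
\begin{itemize}
\item For $n=4k$, a path with $(2n-4)-(2k-2)=6k-2$ vertices; one more leaf-domination at the end should bring it down to $P_{6k-4}=P_{3(2k-1)-1}$, so that Proposition \ref{path} gives $\mathbb{S}^{2k-2}$.
\item For $n=4k+1$ and $n=4k+2$, either an isolated vertex appears (as for $n=4k+3$ in Lemma \ref{link}), or the remainder is a path $P_{3m+1}$. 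In both situations the complex is contractible.
\item For $n=4k+3$, a path $P_{3(2k+1)}$ or $P_{3(2k+1)-1}$, so that Proposition \ref{path} gives $\mathbb{S}^{2k}$.
\end{itemize}
In each case I will count the deleted set exactly as in the proof of Lemma \ref{link}: four vertices for $N[v_1]$, plus two per period of the chain, plus at most two boundary vertices.

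\textbf{Main obstacle.} The real work is the boundary bookkeeping in Step 2. One must identify precisely where the chain stops in each residue class, whether the last removed vertex lies on the top or the bottom rail, and whether the pendant vertex $v_{2n}$ creates an extra leaf-domination or an isolated vertex. Only after that does the vertex count of the final path come out correctly. I will do this by tracking the four residue classes explicitly on small representatives ($n=4,5,6,7$). The general case then follows by the period-$4$ repetition of the chain.
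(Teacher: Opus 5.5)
Your strategy is the paper's: the same zig-zag chain $v_{n+4},v_6,v_{n+8},v_{10},\dots$ removed via Proposition~\ref{easy}, then Proposition~\ref{path}. The gap is that the proposal stops exactly where the content of the proof lies.

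\textbf{The concrete case $n=4k$ is wrong.} For $n\equiv 1,2,3 \pmod 4$ you only list alternatives (``an isolated vertex or $P_{3m+1}$'', ``$P_{3(2k+1)}$ or $P_{3(2k+1)-1}$''). The one case you make concrete is $n=4k$. After the $2k-2$ regular deletions, ending with $v_{n-2}$, the remainder has $6k-2$ vertices, but it is not a path. The vertex $v_{n-1}$ has lost $v_{n-2}$ and $v_n$, and $v_{2n}$ has lost $v_n$ and $v_{n+1}$. So both are leaves attached to $v_{2n-1}$. One more application of Proposition~\ref{easy}, using $N(v_{n-1})=\{v_{2n-1}\}\subseteq N(v_{2n})$, removes exactly one vertex, namely $v_{2n}$. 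What remains is the path $v_{n+2}\cdots v_{n-1}$ on $6k-3$ vertices, i.e.\ $P_{3(2k-1)}$, not $P_{6k-4}$.

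Your route ``$P_{6k-2}$, then $P_{6k-4}$'' cannot be correct as stated. $\operatorname{Ind}(P_{6k-2})=\operatorname{Ind}(P_{3(2k-1)+1})$ is contractible, so no homotopy equivalence takes it to $\mathbb{S}^{2k-2}$. You reach the right sphere only because $\operatorname{Ind}(P_{3m})\simeq\operatorname{Ind}(P_{3m-1})$.

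\textbf{What is missing} is a determination, for general $k$, of where the chain stops and what remains. The paper supplies this.
\begin{itemize}
\item For $n=4k+1$ the chain ends with $v_{2n-1}$. Then $v_{2n}$ is isolated, since $v_{2n-1},v_n,v_{n+1}$ are all gone, so the complex is a cone.
\item For $n=4k+2$ the chain ends with $v_{2n-2}$. The remainder is the path from $v_{n+2}$ to $v_{2n}$ on $(2n-4)-(2k-1)=6k+1$ vertices, which gives a contractible complex.
\item For $n=4k+3$ the chain ends with $v_{n-1}$, justified by $N(v_{2n-2})\subseteq N(v_{n-1})$. The remainder is the path from $v_{n+2}$ to $v_{2n}$ on $(2n-4)-2k=3(2k+1)-1$ vertices, giving $\mathbb{S}^{2k}$.
\end{itemize}
Your hedged lists happen to contain these outcomes, but choosing among them is the proof.

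Your planned test cases $n=4,5,6,7$ are too degenerate to reveal this endgame. For $n=4,5,6$ the chain is a single vertex, namely the terminal one $v_{2n}$, $v_{2n-1}$, $v_{2n-2}$ respectively. So checking them and invoking ``period-$4$ repetition'' does not replace the general count. That count is $2k-1$, $2k-1$, $2k-1$, $2k$ deleted vertices beyond $N[v_1]$ for $n\equiv 0,1,2,3 \pmod 4$, respectively.
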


\begin{proof}
In $(\mathcal C_{2n}\setminus N[v_1])$, consider $v_3$ and observe that $N(v_3)\subset N(v_{n+4})$. Thus, Ind$(\mathcal C_{2n}\setminus N[v_1])\simeq$ Ind$(\mathcal C_{2n}\setminus (N[v_1]\cup\{v_{n+4}\}))$ due to Proposition \ref{easy}. In $\mathcal C_{2n}\setminus (N[v_1]\cup\{v_{n+4}\})$, $N(v_{n+5})\subset N(v_6)$. Applying Proposition \ref{easy} on $\mathcal C_{2n}\setminus (N[v_1]\cup\{v_{n+4}\})$, we get that Ind$(\mathcal C_{2n}\setminus N[v_1])\simeq$ Ind$(\mathcal C_{2n}\setminus (N[v_1]\cup\{v_{n+4}\}))\simeq$ Ind$(\mathcal C_{2n}\setminus (N[v_1]\cup\{v_{n+4}, v_6\}))$. Therefore, applying Proposition \ref{easy} iteratively, we finally get that $$\operatorname{Ind}(\mathcal C_{2n}\setminus N[v_1])\simeq \begin{cases}
       \operatorname{Ind}(\mathcal C_{2n}\setminus (N[v_1]\cup \{v_{n+4},v_6,v_{n+8},\dots, v_{n-2},v_{2n}\})) & \text{if } n=4k, \\ 
       \operatorname{Ind}(\mathcal C_{2n}\setminus (N[v_1]\cup \{v_{n+4},v_6,v_{n+8},\dots, v_{n-3}, v_{2n-1}\})) & \text{if } n=4k+1, \\ 
       \operatorname{Ind}(\mathcal C_{2n}\setminus (N[v_1]\cup \{v_{n+4},v_6,v_{n+8},\dots, v_{n-4}, v_{2n-2}\})) & \text{if } n=4k+2, \\ 
        
       \operatorname{Ind}(\mathcal C_{2n}\setminus (N[v_1]\cup \{v_{n+4},v_6,v_{n+8},\dots, v_{2n-3}, v_{n-1}\})) & \text{if } n=4k+3.
   \end{cases}$$

 For $n=4k$, $\mathcal C_{2n}\setminus (N[v_1]\cup \{v_{n+4},v_6,v_{n+8},\dots, v_{n-2},v_{2n}\})$ is a path with endpoints $v_{n+2}$ and $v_{n-1}$. Since the cardinality of the set $N[v_1]\cup \{v_{n+4},v_6,v_{n+8},\dots, v_{n-2},v_{2n}\}$ is $4+2(k-1)+1=2k+3$, $\mathcal C_{2n}\setminus (N[v_1]\cup \{v_{n+4},v_6,v_{n+8},\dots, v_{n-2},v_{2n}\})$ is the path $P_{6k-3}$.

 For $n=4k+1$, note that $v_{2n}$ is an isolated vertex in $\mathcal C_{2n}\setminus (N[v_1]\cup \{v_{n+4},v_6,v_{n+8},\dots, v_{n-3}, \\v_{2n-1}\})$. Therefore, Ind$(\mathcal C_{2n}\setminus N[v_1])$ is contractible.

  For $n=4k+2$, $\mathcal C_{2n}\setminus (N[v_1]\cup \{v_{n+4},v_6,v_{n+8},\dots, v_{n-4}, v_{2n-2}\})$ is a path with endpoints $v_{n+2}$ and $v_{2n}$. Since the cardinality of the set $N[v_1]\cup \{v_{n+4},v_6,v_{n+8},\dots, v_{n-4}, v_{2n-2}\}$ is $4+2(k-1)+1=2k+3$, $\mathcal C_{2n}\setminus (N[v_1]\cup \{v_{n+4},v_6,v_{n+8},\dots, v_{n-4}, v_{2n-2}\})$ is the path $P_{6k+1}$.

For $n=4k+3$, $\mathcal C_{2n}\setminus (N[v_1]\cup \{v_{n+4},v_6,v_{n+8},\dots, v_{2n-3}, v_{n-1}\})$ is a path with endpoints $v_{n+2}$ and $v_{2n}$. Since the cardinality of the set $N[v_1]\cup \{v_{n+4},v_6,v_{n+8},\dots, v_{2n-3}, v_{n-1}\}$ is $4+2k$, $\mathcal C_{2n}\setminus (N[v_1]\cup \{v_{n+4},v_6,v_{n+8},\dots, v_{2n-3}, v_{n-1}\})$ is the path $P_{6k+2}$. In summary, we get the following,

    $$\operatorname{Ind}(\mathcal C_{2n}\setminus N[v_1])\simeq \begin{cases}
       \operatorname{Ind}(P_{3(2k-1)}) & \text{if } n=4k, \\ 
        \ast & \text{if } n=4k+1, \\ 
       \operatorname{Ind}(P_{6k+1}) & \text{if } n=4k+2, \\ 
        
       \operatorname{Ind}(P_{(3(2k+1)-1)}) & \text{if } n=4k+3.
   \end{cases}$$   
Now, the result follows from Proposition \ref{path}.
\end{proof}

   \begin{theorem}\label{ICCL}
For $n\ge 2$,
$$ \operatorname{ Ind}(\mathcal C_{2n})\simeq \begin{cases}
    \bigvee_3 \mathbb{S}^{2k-1} & \text{if } n=4k, \\   \mathbb{S}^{2k-1} & \text{if } n=4k+1 \\ \mathbb{S}^{2k} & \text{if } n=4k+2,\\
    \mathbb{S}^{2k+1} & \text{if } n=4k+3.
   \end{cases}$$
    
\end{theorem}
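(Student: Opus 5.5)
We use the link--deletion decomposition at $v_1$, in the same way as in the proof of Theorem \ref{IC}:
$$\operatorname{Ind}(\mathcal C_{2n})=\operatorname{Ind}(\mathcal C_{2n}\setminus v_1)\cup\big(\{v_1\}\star \operatorname{Ind}(\mathcal C_{2n}\setminus N[v_1])\big).$$
Lemma \ref{deletionCL} gives the homotopy type of the deletion and Lemma \ref{linkCL} that of the link. In each residue class of $n$ modulo $4$ we check that the inclusion $\operatorname{Ind}(\mathcal C_{2n}\setminus N[v_1])\hookrightarrow \operatorname{Ind}(\mathcal C_{2n}\setminus v_1)$ is null-homotopic. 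Proposition \ref{ld} then gives $\operatorname{Ind}(\mathcal C_{2n})\simeq \operatorname{Ind}(\mathcal C_{2n}\setminus v_1)\vee \Sigma\operatorname{Ind}(\mathcal C_{2n}\setminus N[v_1])$.

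The cases are as follows.
\begin{enumerate}[(i)]
\item For $n=4k+1$ and $n=4k+2$ the link is contractible, so the inclusion is trivially null-homotopic. We get $\mathbb S^{2k-1}$ and $\mathbb S^{2k}$ respectively, since the suspension of a contractible space is contractible.
\item For $n=4k+3$ the deletion is contractible, so any map into it is null-homotopic. We get $\ast\vee\Sigma\mathbb S^{2k}\simeq \mathbb S^{2k+1}$.
\item For $n=4k$ the link is $\simeq\mathbb S^{2k-2}$ and the deletion is $\simeq \mathbb S^{2k-1}\vee\mathbb S^{2k-1}$, which is $(2k-2)$-connected. By cellular approximation any map $\mathbb S^{2k-2}\to \mathbb S^{2k-1}\vee\mathbb S^{2k-1}$ is null-homotopic. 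The result is $\mathbb S^{2k-1}\vee\mathbb S^{2k-1}\vee\Sigma\mathbb S^{2k-2}=\bigvee_3\mathbb S^{2k-1}$.
\end{enumerate}

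The one point needing care is the $n=4k$ case. A map from a CW complex of dimension $d$ into a $d$-connected space is null-homotopic, so the connectivity argument applies once the link is replaced by a homotopy-equivalent complex of dimension $2k-2$. This replacement is legitimate because Lemma \ref{linkCL} was obtained through the vertex removals of Proposition \ref{easy}, which are homotopy equivalences of the complexes, and null-homotopy of the inclusion is invariant under such replacements.

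For small $n$ we check the boundary cases directly. For $n=4$ ($k=1$) the link is $\mathbb S^0$ and the deletion is connected, so the inclusion is null-homotopic and the argument holds as written. The values $n=2,3$ are degenerate: for $n=2$ the graph $\mathcal C_4$ is the $4$-cycle, which gives $\mathbb S^0$, matching the formula for $4k+2$ with $k=0$. For $n=3$, Lemma \ref{deletionCL} gives a contractible deletion and Lemma \ref{linkCL} gives a link $\simeq\mathbb S^0$, so the result is $\mathbb S^1$, consistent with the formula.
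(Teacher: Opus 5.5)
Your proposal is correct and uses the same argument as the paper: the link--deletion splitting of Proposition~\ref{ld} at $v_1$, with the homotopy types of the deletion and link taken from Lemmas~\ref{deletionCL} and~\ref{linkCL}. The paper only asserts that the inclusion $\operatorname{Ind}(\mathcal C_{2n}\setminus N[v_1])\hookrightarrow\operatorname{Ind}(\mathcal C_{2n}\setminus v_1)$ is null-homotopic, whereas you justify it in each case: the link or the deletion is contractible, and for $n=4k$ a map from $\mathbb S^{2k-2}$ into a $(2k-2)$-connected space is null-homotopic.
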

 \begin{proof}   
Consider $\mathcal C_{2n}\setminus v_1$ and $\mathcal C_{2n}\setminus N[v_1]$. Note that {\rm Ind}$(\mathcal C_{2n}\setminus N[v_1])$ is null-homotopic in {\rm Ind}$(\mathcal C_{2n}\setminus v_1)$ due to Lemma \ref{deletionCL} and \ref{linkCL}. Thus, Proposition \ref{ld} can be applied on $\mathcal C_{2n}$ w.r.t. $v_1$. Hence, it is proved.
\end{proof}

\section{Perfect Matching Complex}\label{Section:4}

In this section, we study the homotopy type of the perfect matching complexes of the M\"obius and circular ladder graphs. Let us start with the M\"obius ladder graph.

\subsection{Perfect matching complex of M\"obius ladder graph}

\begin{lemma}\label{lemma:extend}
Let $S$ be the set $$\{\{b_i,c_{i+1}\},\ \{c_i,b_{i+1}\},\ \{d_1,c_1\}, \ \{d_1,b_{n-1}\},\ \{d_2,b_1\},\ \{d_2,c_{n-1}\}\ |\ 1\le i\le n-2 \}.$$ If $\sigma$ is a matching of $M_{2n}$ such that $s \nsubseteq \sigma$  for all $s\in S$, then $\sigma$ can be extended to a perfect matching of $M_{2n}$ for all $n\ge 2$.
\end{lemma}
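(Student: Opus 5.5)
The plan is to extend $\sigma$ \emph{column by column}, treating $M_{2n}$ as a ladder whose rungs $a_i=v_iv_{i+n}$ form the columns $\{v_i,v_{n+i}\}$. The rails are the edges $b_i=v_iv_{i+1}$ and $c_i=v_{n+i}v_{n+i+1}$ for $1\le i\le n-1$. The two twisted edges $d_1,d_2\in\{v_nv_{n+1},\,v_1v_{2n}\}$ close the ladder up between column $n$ and column $1$. The key observation is that $b_i$ and $c_i$ together cover exactly columns $i$ and $i+1$, and likewise $d_1$ and $d_2$ together cover exactly columns $n$ and $1$. Call these pairs \emph{parallel pairs}. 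Every set $s\in S$ is a pair of ``staggered'' rail edges, and these are exactly the obstructions to completing a rail edge by its parallel partner.

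\textbf{Step 1 (complete rail edges).} Let $\sigma'$ be $\sigma$ together with the parallel partner of every rail or twisted edge in $\sigma$: add $c_i$ for each $b_i\in\sigma$, add $b_i$ for each $c_i\in\sigma$, and add $d_2$ (resp.\ $d_1$) for $d_1$ (resp.\ $d_2$). I must check that $\sigma'$ is a matching.

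Take an added edge $c_i$ coming from $b_i\in\sigma$. The edges of $M_{2n}$ meeting $c_i$ are $a_i$, $a_{i+1}$, $c_{i-1}$, $c_{i+1}$, and possibly $d_1$ or $d_2$ when $i=1$ or $i=n-1$.
\begin{itemize}
\item $a_i$ and $a_{i+1}$ meet $b_i$, so they are not in $\sigma$.
\item $c_{i+1}$ is excluded because $\{b_i,c_{i+1}\}\in S$, and $c_{i-1}$ is excluded because $\{c_{i-1},b_i\}\in S$.
\item At the ends, the twisted edge meeting $c_1$ is $d_1$, excluded because $\{d_1,c_1\}\in S$ holds when $b_1\in\sigma$. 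At the other end, $\{d_2,c_{n-1}\}\in S$ does the same job. One has to verify with the labeling of Figure~\ref{fig:ML} which $d_j$ is adjacent to which rail edge; the four $d$-pairs in $S$ are exactly designed for this.
\end{itemize}
Next, two added edges can never coincide or meet. For instance, $c_i$ and $c_{i+1}$ would both be added only if $b_i,b_{i+1}\in\sigma$, but these share $v_{i+1}$. Finally, an added edge cannot meet another added edge coming from a different parallel pair, since that would force two edges of $\sigma$ to share a vertex or to form a pair in $S$. The symmetric arguments handle $b_i$ added from $c_i$ and $d_1,d_2$ added from each other.

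\textbf{Step 2 (columns are all-or-nothing).} In $\sigma'$, every covered vertex lies in a fully covered column. A rung covers its whole column, and a parallel pair covers its two columns completely. Hence the uncovered vertices form a union of complete columns $\{v_j,v_{n+j}\}$. Adding the rungs $a_j$ for these columns yields a perfect matching containing $\sigma$.

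\textbf{Main obstacle.} The delicate part is the bookkeeping at the twist in Step 1. Column $n$ is joined to column $1$ with top and bottom interchanged, so I need to identify precisely which of $d_1,d_2$ is incident to $c_1$, $b_1$, $b_{n-1}$ and $c_{n-1}$. I also need to check that a $d$-edge in $\sigma$ together with $b_1$ or $c_{n-1}$ (rather than the listed partners) never causes a conflict. I would handle this by writing out the neighbourhoods of $v_1,v_n,v_{n+1},v_{2n}$ explicitly and checking the small number of cases. I would also treat $n=2$ separately, where the index range $1\le i\le n-2$ is empty and $M_4=K_4$.
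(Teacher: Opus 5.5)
Your argument is correct and is essentially the paper's proof. The paper also takes each half-covered column $\{v_j,v_{n+j}\}$ and completes the rail edge covering it by its parallel partner. For instance, it adds $c_j$ when $b_j\in\sigma$, using $\{b_j,c_{j+1}\}\not\subseteq\sigma$ to see that $v_{n+j+1}$ is free. It then puts the rung $a_j$ on each empty column. The only difference is organizational: the paper works one column at a time and leaves the compatibility of successive additions implicit, whereas your simultaneous Step~1 checks it explicitly.

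Your bookkeeping at the twist needs correcting. In the paper's labelling, $d_1=v_1v_{2n}$ and $d_2=v_nv_{n+1}$. So the twisted edge meeting $c_1$ is $d_2$, not $d_1$, and it is excluded by $\{d_2,b_1\}\in S$. The twisted edge meeting $c_{n-1}$ is $d_1$, excluded by $\{d_1,b_{n-1}\}\in S$. The pairs $\{d_1,c_1\}$ and $\{d_2,c_{n-1}\}$ are used in the reverse situations, when $c_1$ or $c_{n-1}$ lies in $\sigma$. The general principle is that every potential blocker of an added partner forms a staggered pair with the original edge of $\sigma$, and $S$ is exactly the set of staggered pairs.
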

\begin{proof}
  Let $\sigma$ is a matching of $M_{2n}$ such that $s \nsubseteq \sigma$ for all $s\in S$. Let $V(\sigma)\subset V(M_{2n})$ be the set of all vertices of $M_{2n}$ that are endpoints of the edges of $\sigma$. Consider the edge $a_j\in E(M_{2n})$ for $1\le j\le n$. For notation, ref Figure \ref{fig:ML}. The endpoints $v_j$ and $v_{n+j}$ of $a_j$ satisfy exactly one of the following four conditions:
\begin{enumerate}[$(i)$]
    \item $v_j, v_{n+j} \in V(\sigma)$;
    \item $v_j \in V(\sigma)$ and $v_{n+j} \notin V(\sigma)$;
    \item $v_j \notin V(\sigma)$ and $v_{n+j} \in V(\sigma)$;
    \item $v_j, v_{n+j} \notin V(\sigma)$.
\end{enumerate}

We want to extend the matching $\sigma$ to a perfect matching. Therefore, when condition $(i)$ arises, look for some other $a_j$ that satisfies conditions $(ii)$, $(iii)$ or $(iv)$. If $a_j$ satisfies condition $(iv)$, then add $a_j$ to the matching $\sigma$. Note that conditions $(ii)$ and $(iii)$ are similar, so we consider condition $(ii)$. 

Let $a_j$ satisfy the condition $(ii)$, for $3\le j\le n-2$. This implies that $a_j\notin \sigma$. Since $a_j\notin \sigma$ and $v_j\in V(\sigma)$, either $b_{j-1}\in \sigma$ or $b_{j}\in \sigma$. Let us first assume that $b_{j}\in \sigma$. This implies that $a_{j+1}\notin \sigma$. Since $v_{n+j}\notin V(\sigma)$, $c_j\notin \sigma$. Since $b_{j}\in \sigma$ and $\{b_{j},c_{j+1}\} \nsubseteq \sigma $, $c_{j+1}\notin \sigma$. Now, since $c_j,c_{j+1},a_{j+1} \notin \sigma$, we have that $v_{n+j+1}\notin V(\sigma)$. Therefore, we can add $c_j$ to the matching $\sigma$. Let us now assume that $b_{j-1}\in \sigma$. Since $b_{j-1}\in \sigma$ and $\{c_{j-2},b_{j-1}\} \nsubseteq \sigma  $, $a_{j-1},c_{j-2} \notin \sigma$. Since $v_{n+j}\notin V(\sigma)$, $c_{j-1}\notin \sigma$. Now, since $c_{j-1},c_{j-2},a_{j-1} \notin \sigma$, we have that $v_{n+j-1}\notin V(\sigma)$. Therefore, we can add $c_{j-1}$ to the matching $\sigma$.

For $j\in \{1,2,n-1,n\}$, the argument follows similarly to the above. Let us consider the case when $j=1$. Since $a_1$ satisfies condition (ii), $a_1\notin \sigma$. Since $v_1\in V(\sigma)$, either $b_{1}\in \sigma$ or $d_{1}\in \sigma$. Let us first assume that $b_{1}\in \sigma$. This implies that $a_{2}\notin \sigma$. Since $b_{1}\in \sigma$ and $\{b_1,c_2\} \nsubseteq \sigma$, $c_{2}\notin \sigma$. Since $v_{n+1}\notin V(\sigma)$, $c_1\notin \sigma$.  Now, since $c_1,c_{2},a_{2} \notin \sigma$, we have that $v_{n+2}\notin V(\sigma)$. Therefore, we can add $c_1$ to the matching $\sigma$. Let us now assume that $d_{1}\in \sigma$. Since $d_{1}\in \sigma$ and $\{d_1,b_{n-1}\} \nsubseteq \sigma$ is empty, $a_{n},b_{n-1} \notin \sigma$. Since $v_{n+1}\notin V(\sigma)$, $d_{2}\notin \sigma$. Now, since $a_{n},b_{n-1},d_{2} \notin \sigma$, we have that $v_{n}\notin V(\sigma)$. Therefore, we can add $d_{2}$ to the matching $\sigma$. Hence, the matching $\sigma$ is extended to a perfect matching of $M_{2n}$. 
\end{proof}

\begin{lemma}\label{lemma:extendS}
            For $n$ even, each $s \in S$ is a bad matching of $M_{2n}$. For $n$ odd, each $s_1$ in $$S_1= \{\{b_i,c_{i+1}\},\ \{c_{j},b_{j+1}\},\ \{d_2,b_1\},\ \{d_2,c_{n-1}\}\ |\ 1\le i,j\le n-2,\ i \text{ is odd, } j \text{ is even}\}$$ is uniquely extendable to the perfect matching $ p_1=\{b_1,b_3,\dots,b_{n-2}, d_2, c_2,c_4,\dots,c_{n-1}\}$, and each $s_2$ in $$S_2=\{\{b_i,c_{i+1}\},\ \{c_{j},b_{j+1}\},\ \{d_1,c_1\}, \ \{d_1,b_{n-1}\}\ |\ 1\le i,j\le n-2,\ i \text{ is even, } j \text{ is odd}\}$$ is uniquely extendable to the perfect matching $p_2=\{b_2,b_4,\dots,b_{n-1}, c_1,c_3,\dots,c_{n-2}, d_1\}$.
\end{lemma}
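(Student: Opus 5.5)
The plan is to show that each two-edge set in $S$ forces a unique "staircase" of edges around the ladder. Parity of $n$ then decides whether the staircase closes up into a perfect matching or runs into a contradiction. From the proof of Lemma~\ref{lemma:extend} and Figure~\ref{fig:ML} I read the labels as follows: $a_j=v_jv_{n+j}$ are the rungs, $b_i=v_iv_{i+1}$ and $c_i=v_{n+i}v_{n+i+1}$ are the top and bottom rails, and $d_1=v_1v_{2n}$, $d_2=v_nv_{n+1}$ are the two twisted edges. Every $s\in S$ consists of two disjoint edges, so it is a matching.

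\textbf{Step 1: the forced propagation.} Take $s=\{b_i,c_{i+1}\}$. It covers $v_i,v_{i+1},v_{n+i+1},v_{n+i+2}$.
\begin{itemize}
\item In any perfect matching $P\supseteq s$, the vertex $v_{i+2}$ has neighbours $v_{i+1}$ (covered), $v_{n+i+2}$ (covered) and $v_{i+3}$, so $b_{i+2}\in P$.
\item Symmetrically, $v_{n+i}$ has neighbours $v_{n+i+1}$ and $v_i$ (both covered) and $v_{n+i-1}$, so $c_{i-1}\in P$.
\item Iterating, $P$ must contain $b_{i+2},b_{i+4},\dots$ moving right and $c_{i-1},c_{i-3},\dots$ moving left. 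The partner $c_{i+1}$ similarly forces $c_{i+3},c_{i+5},\dots$, and $b_i$ forces $b_{i-2},\dots$.
\end{itemize}
So $P$ is determined edge by edge, with no choices, until the chains reach the ends of the ladder. There they pass through $d_1$ or $d_2$. The Möbius twist sends the top rail onto the bottom rail, and the same propagation step continues through $d_1,d_2$.

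\textbf{Step 2: closing up.} I will follow the forced chains all the way around, which amounts to walking once around the $2n$-cycle $v_1v_2\cdots v_{2n}$.
\begin{itemize}
\item For $n$ odd and $s\in S_1$ (the parity conditions $i$ odd for $\{b_i,c_{i+1}\}$, $j$ even for $\{c_j,b_{j+1}\}$, plus the two $d_2$-pairs), the forced edges are exactly $p_1$. I will check directly that $p_1$ covers every vertex exactly once, so $p_1$ is a perfect matching and the unique extension of $s$. The case $S_2$ and $p_2$ is symmetric.
\item For $n$ even, the chain returns with the wrong parity. Concretely, the forced edges would require some vertex to be covered twice, or they isolate a vertex all of whose neighbours are already covered. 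So no perfect matching contains $s$.
\item The pairs involving $d_1,d_2$ are handled in the same way, starting the propagation at the twist.
\end{itemize}

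\textbf{Step 3: minimality for $n$ even.} To conclude that each $s$ is a bad matching, I still need each single edge of $s$ to extend to a perfect matching. Any single edge $e$ satisfies $s\nsubseteq\{e\}$ for all $s\in S$, so this follows from Lemma~\ref{lemma:extend}. Alternatively, one can exhibit explicit perfect matchings: the matching of all rungs $a_j$, and the two rail-alternating matchings, which are perfect for $n$ even.

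The main obstacle is the index bookkeeping in Step 2 as the chain wraps through the twist. To keep it clean, I will relabel the Möbius ladder as the $2n$-cycle $u_1\cdots u_{2n}$ with chords $u_ju_{j+n}$. In this picture the forced edges are every other cycle edge, and consistency reduces to a single parity check of an index modulo $2n$ against the shift $n$. The uniqueness claims come directly from the propagation in Step 1.
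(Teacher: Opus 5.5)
Your proposal is correct and follows essentially the paper's own argument. The paper likewise propagates the forced rail edges from each pair in $S$, splits into cases by the parities of $i$ and $n$, and checks whether the two leftover vertices are adjacent. Your relabeling via the $2n$-cycle (with $e_k=v_kv_{k+1}$, indices mod $2n$, so that $S=\{\{e_k,e_{k+n+1}\} : k\in\mathbb{Z}_{2n}\}$ and $p_1,p_2$ are the two alternating cycle matchings) only compresses that case analysis into one parity check. Your Step~3 adds the minimality check for ``bad matching'' that the paper leaves implicit.

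One caveat, shared with the paper: for $n=2$ the two edges of each $s\in S$ share a vertex, so your claim that every $s\in S$ is a matching needs $n\ge 3$.
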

\begin{proof}
  Consider the matching $\{b_i,c_{i+1}\}$, where $1\le i\le n-2$. Let us denote this matching by $\sigma_1$. Since $b_i, c_{i+1}\in \sigma_1$, $b_{i+1}, a_{i+2}$ cannot be added to $\sigma_1$, respectively. Since $b_{i+2}$ is the only remaining edge incident to $v_{i+2}$, it must be added to $\sigma_1$. It implies that $a_{i+3}$ cannot be added to $\sigma_1\cup \{b_{i+2}\}$. Since $c_{i+1}\in \sigma_1$, $c_{i+2}$ cannot be added to $\sigma_1\cup \{b_{i+2}\}$. Thus, $c_{i+3}$ must be added to the matching $\sigma_1\cup \{b_{i+2}\}$, as it the only remaining edge incident to $v_{n+i+3}$. Now, we have the matching $\{b_i,c_{i+1},b_{i+2},c_{i+3}\}$. There are several cases depending on $i$ and $n$. Let us discuss all the cases one by one. 
  
 \noindent \textbf{Case ($i,n$ both are even):} Following a similar argument as above, we finally get the matching $\{b_i,c_{i+1},b_{i+2},c_{i+3},\dots, b_{n-2}, c_{n-1}, c_{i-1}, b_{i-2},\dots, c_3,b_2,c_1\}.$
  This matching cannot be extended to a perfect matching of $M_{2n}$, since the remaining vertices $v_1$ and $v_n$ are not endpoints of any edge in $M_{2n}$. 

  \noindent \textbf{Case ($i$ is odd, $n$ is even):} Following a similar argument as above, we finally get the matching $\{b_i,c_{i+1},b_{i+2},c_{i+3},\dots, b_{n-3}, c_{n-2},b_{n-1}, c_{i-1}, b_{i-2},\dots, c_2,b_1\}.$
  This matching cannot be extended to a perfect matching of $M_{2n}$, since the remaining vertices $v_{n+1}$ and $v_{2n}$ are not endpoints of any edge in $M_{2n}$. 

  \noindent \textbf{Case ($i$ is even, $n$ is odd):} Following a similar argument as above, we get the matching $\{b_i,c_{i+1},b_{i+2},c_{i+3},\dots, b_{n-3}, c_{n-2},b_{n-1}, c_{i-1}, b_{i-2},\dots, c_3,b_2,c_1\}.$
  This matching can be extended to a perfect matching of $M_{2n}$, since the remaining vertices $v_{1}$ and $v_{2n}$ are endpoints of the edge $d_1$. Adding $d_1$, we get the perfect matching $p_2.$

  \noindent \textbf{Case ($i,n$ both are odd):} Following a similar argument as above, we get the matching $\{b_i,c_{i+1},b_{i+2},c_{i+3},\dots, b_{n-2}, c_{n-1}, c_{i-1}, b_{i-2},\dots, c_2,b_1\}.$
  This matching can be extended to a perfect matching of $M_{2n}$, since the remaining vertices $v_{n}$ and $v_{n+1}$ are endpoints of the edge $d_2$. Adding $d_2$, we get the perfect matching $p_1.$

  Similar reasoning works for the matching $\{c_i,b_{i+1}\}$ for $1\le i\le n-2$. Consider the remaining matchings of the set $S$, $\{d_1,c_{1}\},\ \{d_1,b_{n-1}\},\ \{d_2,b_{1}\},\ \{d_2,c_{n-1}\}$. Note that the reasoning for these four matchings will be similar, so we consider the matching $\{d_1,c_{1}\}$. Let us denote this matching by $\sigma_2$. Since $d_1, c_{1}\in \sigma_2$, $b_{1}, a_{2}$ cannot be added to $\sigma_2$, respectively. Since $b_{2}$ is the only remaining edge incident to $v_{2}$, it must be added to $\sigma_2$. It implies that $a_{3}$ cannot be added to $\sigma_2\cup \{b_{2}\}$. Since $c_{1}\in \sigma_2$, $c_{2}$ cannot be added to $\sigma_2\cup \{b_{2}\}$. Thus, $c_{3}$ must be added to the matching $\sigma_2\cup \{b_{2}\}$, as it the only remaining edge incident to $v_{n+3}$. Now, we have the matching $\{d_1,c_{1},b_{2},c_{3}\}$. Since $b_2,c_3$ are in this matching now, we can use the above argument given for the matching $\{b_2,c_3\}$. There are two cases depending on $n$. Let us discuss these cases.

   \noindent \textbf{Case ($n$ is even):}  We finally get the matching $\{d_1,c_{1},b_{2},c_{3},b_4,c_5,\dots, b_{n-4}, c_{n-3}, b_{n-2}\}.$ This matching cannot be extended to a perfect matching of $M_{2n}$, since the remaining vertices $v_n$ and $v_{2n-1}$ are not endpoints of any edge in $M_{2n}$. 

   \noindent \textbf{Case ($n$ is odd):}  We  get the matching $\{d_1,c_{1},b_{2},c_{3},b_4,c_5,\dots, b_{n-3}, c_{n-2}, b_{n-1}\},$ which is the perfect matching $p_2$ of $M_{2n}$. 

   Since $S_1\cup S_2=S$, we have that a matching of $S$ can be extended to the perfect matching $p_1$ or $p_2$, if $n$ is odd.
\end{proof}

\begin{lemma}\label{lemma:badmatch}
For $n$ even, $S$ is the set of all bad matchings of $M_{2n}$. For $n$ odd, a bad matching of $M_{2n}$ is either of the form $s_1 \cup \{e_1\}$ or $s_2 \cup \{e_2\}$, where $s_i\in S_i$ and $e_i\in E(M_{2n})\setminus p_i$ for $i\in \{1,2\}$.
\end{lemma}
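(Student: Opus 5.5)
The argument combines Lemma \ref{lemma:extend} and Lemma \ref{lemma:extendS}, using the minimality built into Definition \ref{defi:bm}. Let $\tau$ be a bad matching of $M_{2n}$. Since $\tau$ cannot be extended, Lemma \ref{lemma:extend} (in contrapositive form) gives some $s\in S$ with $s\subseteq\tau$. The rest of the proof determines, according to the parity of $n$, how much bigger than $s$ the set $\tau$ can be.

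\emph{Case $n$ even.} By Lemma \ref{lemma:extendS}, every $s\in S$ is itself non-extendable. Minimality of $\tau$ then forces $\tau=s$. So every bad matching lies in $S$.

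For the converse, take $s\in S$; it is a two-edge matching that does not extend. Each one-edge subset of $s$, and the empty set, contains no element of $S$, because every element of $S$ has size two. By Lemma \ref{lemma:extend} these subsets are therefore extendable. Hence $s$ is minimal non-extendable, i.e. bad, and $S$ is exactly the set of bad matchings.

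\emph{Case $n$ odd.} We have $S=S_1\cup S_2$, so $s\in S_i$ for some $i\in\{1,2\}$. By Lemma \ref{lemma:extendS}, $s$ extends uniquely to $p_i$. Therefore any matching containing $s$ is extendable if and only if it is contained in $p_i$.

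Since $\tau\supseteq s$ is not extendable, $\tau\not\subseteq p_i$. Choose an edge $e_i\in\tau\setminus p_i$; then $e_i\in E(M_{2n})\setminus p_i$. The set $s\cup\{e_i\}\subseteq\tau$ is a matching that contains $s$ but is not contained in $p_i$, so it is non-extendable. Minimality of $\tau$ gives $\tau=s\cup\{e_i\}$, which is the required form.

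The step I expect to need the most care is the claim that an extension of a matching $\sigma\supseteq s$ must be $p_i$. This holds because any perfect matching containing $\sigma$ also contains $s$, and by uniqueness the only perfect matching containing $s$ is $p_i$. This is exactly the uniqueness content of Lemma \ref{lemma:extendS}, so that lemma should be cited there explicitly.

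Note that the statement only asserts that every bad matching has the form $s_i\cup\{e_i\}$. It does not claim the converse, that every such set is bad, so I will not attempt to prove it. If the converse were wanted, one would additionally check that $s_i\cup\{e_i\}$ is a matching and that each of its one-edge subsets extends; this would again follow from Lemma \ref{lemma:extend}, after checking that no two-element subset of $s_i\cup\{e_i\}$ other than $s_i$ belongs to $S$.
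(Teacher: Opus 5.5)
Your proof is correct and follows the same route as the paper. The contrapositive of Lemma~\ref{lemma:extend} places some $s\in S$ inside any bad matching, and Lemma~\ref{lemma:extendS} together with minimality then determines the bad matching according to the parity of $n$. You also make explicit two points the paper's terse proof leaves implicit: the minimality check showing each $s\in S$ is bad when $n$ is even, and the uniqueness argument (a matching containing $s_i$ extends if and only if it lies in $p_i$) that produces the edge $e_i$ when $n$ is odd.
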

\begin{proof}
    Since a bad matching is a minimal matching that cannot be extended to a perfect matching, the result follows from Lemmas \ref{lemma:extend} and \ref{lemma:extendS}, when $n$ is even. Now, let $n$ be odd. From Lemma \ref{lemma:extend}, we have that a bad matching must have an element from the set $S$. From Lemma \ref{lemma:extendS}, we have that a matching of $S$ can be extended to the perfect matching $p_1$ or $p_2$. This implies that a bad matching is either of the form $s_1 \cup \{e_1\}$ or $s_2 \cup \{e_2\}$, where $s_i\in S_i$ and $e_i\in E(M_{2n})\setminus p_i$ for $i\in \{1,2\}$.
\end{proof}

 Let us denote the line graph of $M_{2n}$ with additional edges $d_1c_1,\ d_1b_{n-1},\ d_2b_1, d_2c_{n-1},\ b_ic_{i+1}$ and $c_ib_{i+1}\  (1\le i\le n-2)$ by $G_n$ (Figure \ref{fig:G}). The vertex set of $G_n$, $V(G_n)=E(M_{2n})$ and the edge set $$E(G_n)=\{uv\ |\ u \text{ and } v \text{ are adjacent edges in } M_{2n}\} \ \cup$$ $$ \{d_1c_1,\ d_1b_{n-1},\ d_2b_1,\ d_2c_{n-1},\ b_ic_{i+1},\ c_ib_{i+1}\ |\ 1\le i\le n-2\}.$$

\begin{figure}[ht]
\tikzstyle{vert}=[circle, draw, fill=black!100, inner sep=0pt, minimum width=3pt]
\tikzstyle{ver}=[]
\tikzstyle{extra}=[circle, draw, fill=black!50, inner sep=0pt, minimum width=4pt]
\tikzstyle{edge} = [draw,thick,-]
\centering
\begin{tikzpicture}[scale=1]

\foreach \x/\y/\z in {7/-1/12,7/1/13,5/-1/0,5/1/1,3/-1/2,3/1/3,1/-1/4,1/1/5,-1/-1/6,-1/1/7,-3/-1/8,-3/1/9,-5/-1/10,-5/1/11, -4/0/14, -2/0/15, 0/0/16, 4/0/17, 6/0/18}
{\node[vert] (a\z) at (\x,\y){};}

\foreach \x/\y in {2.5/-1,2/-1,1.5/-1,2.5/1,2/1,1.5/1}{\node[extra] () at (\x,\y){};}

\foreach \x/\y in {a0/a2,a6/a4,a8/a6,a1/a3,a7/a5,a9/a7,a8/a10,a9/a11, a0/a12, a1/a13, a14/a8, a14/a9, a14/a10, a14/a11, a15/a6, a15/a7, a15/a8, a15/a9, a16/a4, a16/a5, a16/a6, a16/a7, a17/a0, a17/a1, a17/a2, a17/a3, a18/a0, a18/a1, a18/a12, a18/a13}
{\draw [edge]  (\x) -- (\y);}

\foreach \x/\y/\z in {7.25/-1.4/d_2, 7.25/1.4/d_1, -4.4/0/a_1, -2.4/0/a_2, -0.4/0/a_3, 4.6/0/a_{n-1}, 6.5/0/a_n  }
{\node[ver] () at (\x,\y){\small{$ \z$}};}

\foreach \x/\y/\z/\w in {a8/a11/-3.9/0.25, a6/a9/-1.9/0.25, a4/a7/0.1/0.25, a0/a3/3.9/-0.25, a12/a1/5.9/-0.25, a10/a9/-3.9/-0.25, a8/a7/-1.9/-0.25, a5/a6/0.1/-0.25, a1/a2/3.9/0.25, a0/a13/5.9/0.25 }
{\draw [edge] plot [smooth,tension=1.5] coordinates{(\x) (\z,\w) (\y)};}

\node[ver] () at (5,-1.4){\small{$ c_{n-1}$}};
\node[ver] () at (5,1.4){\small{$ b_{n-1}$}};
\node[ver] () at (3,-1.4){\small{$ c_{n-2}$}};
\node[ver] () at (3,1.4){\small{$ b_{n-2}$}};
\node[ver] () at (1,-1.4){\small{$ c_{3}$}};
\node[ver] () at (1,1.4){\small{$ b_3$}};
\node[ver] () at (-1,-1.4){\small{$ c_{2}$}};
\node[ver] () at (-1,1.4){\small{$ b_2$}};
\node[ver] () at (-3,-1.4){\small{$ c_{1}$}};
\node[ver] () at (-3,1.4){\small{$ b_1$}};
\node[ver] () at (-5.25,1.4){\small{$ d_1$}};
\node[ver] () at (-5.25,-1.4){\small{$ d_{2}$}};

\end{tikzpicture}
\caption{$G_n$}\label{fig:G}
\end{figure}

Let $H_n$ be a graph (Figure \ref{fig:H}) with the vertex set $$V(H_n)=\{a_i,\ b_j \ |\ 1\le i\le n, 1\le j\le n-1\}$$ and the edge set $$E(H_n)=\{a_1b_1,\ a_nb_{n-1},\ b_ib_{i+1},\  a_jb_{j-1},\ a_jb_j\ |\ 1\le i\le n-2,\ 2\le j\le n-1\}.$$

\begin{figure}[ht]
\tikzstyle{vert}=[circle, draw, fill=black!100, inner sep=0pt, minimum width=3pt]
\tikzstyle{ver}=[]
\tikzstyle{extra}=[circle, draw, fill=black!50, inner sep=0pt, minimum width=4pt]
\tikzstyle{edge} = [draw,thick,-]
\centering
\begin{tikzpicture}[scale=1]

\foreach \x/\y/\z in {5/1/b5,3/1/b4,1/1/b3,-1/1/b2,-3/1/b1, -4/0/a1, -2/0/a2, 0/0/a3, 4/0/a4, 6/0/a5, 8/0/a6, 7/1/b6}
{\node[vert] (\z) at (\x,\y){};}

\foreach \x/\y in {2.5/1,2/1,1.5/1}{\node[extra] () at (\x,\y){};}

\foreach \x/\y in {b5/b4,b2/b3,b1/b2, a1/b1,  a2/b2, a2/b1, a3/b3, a3/b2, a4/b4, a4/b5, a5/b5, a5/b6, b6/b5, b6/a6}
{\draw [edge]  (\x) -- (\y);}

\foreach \x/\y/\z in {  -4.4/0/a_1, -2.4/0/a_2, -0.4/0/a_3, 4.6/0/a_{n-2}, 6.6/0/a_{n-1} , 8.4/0/a_n, 7/1.4/b_{n-1} }
{\node[ver] () at (\x,\y){\small{$ \z$}};}

\node[ver] () at (5,1.4){\small{$ b_{n-2}$}};

\node[ver] () at (3,1.4){\small{$ b_{n-3}$}};

\node[ver] () at (1,1.4){\small{$ b_3$}};

\node[ver] () at (-1,1.4){\small{$ b_2$}};

\node[ver] () at (-3,1.4){\small{$ b_1$}};

\end{tikzpicture}
\caption{$H_n$}\label{fig:H}
\end{figure}

\begin{lemma}\label{lemma:H}
    For $n\ge 2$, the Independence complex $$ \operatorname{ Ind}(H_n)\simeq \begin{cases}
       \mathbb{S}^{\frac{n-2}{2}} & \text{if } n \text{ is even}, \\ \ast & \text{if } n\text{ is odd}.
   \end{cases}$$
\end{lemma}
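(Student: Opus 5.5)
The plan is an induction on $n$ in steps of two, using Proposition~\ref{easy} (the fold lemma) at the left end of $H_n$. The vertex $a_1$ is a leaf of $H_n$ with $N(a_1)=\{b_1\}$. For $n\ge 3$ we have $b_1\in N(a_2)=\{b_1,b_2\}$ and $b_1\in N(b_2)$. The neighbourhood of $b_2$ is $\{b_1,b_3,a_2,a_3\}$ when $n\ge 4$, and $\{b_1,a_2,a_3\}$ when $n=3$.

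So Proposition~\ref{easy}, applied twice with the vertex $a_1$ as the dominated vertex, deletes $a_2$ and then $b_2$. This gives $\operatorname{Ind}(H_n)\simeq \operatorname{Ind}(H_n\setminus\{a_2,b_2\})$. Both applications are legitimate, since after deleting $a_2$ we still have $N(a_1)=\{b_1\}\subseteq N(b_2)$.

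Next I will identify the graph $H_n\setminus\{a_2,b_2\}$. In it, $b_1$ is adjacent only to $a_1$, so $\{a_1,b_1\}$ spans a component isomorphic to $P_2$.

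For $n\ge 4$, the remaining vertices are $a_3,\dots,a_n$ and $b_3,\dots,b_{n-1}$. Among them, $a_3$ is now adjacent only to $b_3$ and $a_n$ only to $b_{n-1}$. Each $a_j$ with $4\le j\le n-1$ is adjacent to $b_{j-1}$ and $b_j$, and the $b$'s form a path. Hence, after the shift $a_j\mapsto a_{j-2}$, $b_j\mapsto b_{j-2}$, this component is exactly $H_{n-2}$; I will check the index bookkeeping against the edge list of $H_n$, including the small case $n=4$, where it gives the path $a_3b_3a_4\cong H_2$.

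Using $\operatorname{Ind}(G_1\sqcup G_2)\simeq \operatorname{Ind}(G_1)\star\operatorname{Ind}(G_2)$ and $\operatorname{Ind}(P_2)\simeq\mathbb{S}^0$, this yields
\[
\operatorname{Ind}(H_n)\simeq \mathbb{S}^0\star \operatorname{Ind}(H_{n-2})=\Sigma\operatorname{Ind}(H_{n-2}) \quad (n\ge 4).
\]

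The base cases are $n=2$ and $n=3$.
\begin{itemize}
\item For $n=2$, $H_2$ is the path $a_1b_1a_2=P_3$, so $\operatorname{Ind}(H_2)\simeq\mathbb{S}^0=\mathbb{S}^{(2-2)/2}$ by Proposition~\ref{path}.
\item For $n=3$, the same two fold deletions remove $a_2$ and $b_2$. They leave the edge $a_1b_1$ together with $a_3$, which is now isolated because its only neighbour was $b_2$. A complex containing an isolated vertex of the graph is a cone, so $\operatorname{Ind}(H_3)\simeq\ast$.
\end{itemize}

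The induction then closes. For even $n$, suspending $\mathbb{S}^{(n-4)/2}$ gives $\mathbb{S}^{(n-2)/2}$. For odd $n$, the suspension of a contractible complex is contractible.

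I expect the only real obstacle to be careful verification of the neighbourhood inclusions and of the isomorphism with $H_{n-2}$ at the boundary indices. No null-homotopy argument of the kind needed for Proposition~\ref{ld} should be required.
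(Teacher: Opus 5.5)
Your proposal is correct and is essentially the paper's argument. The paper works at the right end: one fold deletion of $b_{n-2}$ (using $N(a_n)\subseteq N(b_{n-2})$) splits $H_n$ into $H_{n-2}\sqcup P_3$. You work at the mirror-image left end and make one extra fold deletion ($a_2$), so you split off $P_2$ instead of $P_3$; both routes give $\operatorname{Ind}(H_n)\simeq\Sigma\operatorname{Ind}(H_{n-2})$, and handling $n=3$ as a separate base case avoids the paper's informal appeal to $H_1=P_1$.
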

\begin{proof}
  Since $N(a_n)\subset N(b_{n-2})$, we have that Ind$(H_n)\simeq$ Ind$(H_n\setminus b_{n-2})$ by Proposition \ref{easy}. Observe that the graph $H_n\setminus b_{n-2}$ is $H_{n-2} \sqcup P_3$. Thus, Ind$(H_n)\simeq$ Ind$(H_{n-2})\ \star$ Ind$(P_3)$. Note that $H_2$ is $P_3$ and $H_1$ is $P_1$. Applying Proposition \ref{easy} iteratively, we finally get that Ind$(H_n)\simeq \star_{n/2}$ Ind$(P_3)$ for $n$ even, and Ind$(H_n)\simeq a_1 \star (\star_{(n-1)/2}$ Ind$(P_3))\simeq \ast$ for $n$ odd. Due to Proposition \ref{path}, we have that Ind$(H_n)\simeq \star_{n/2} \mathbb{S}^0=\mathbb{S}^{\frac{n-2}{2}}$ for $n$ even.
\end{proof}

\begin{lemma}\label{lemma:G}
    For $n\ge 2$, the Independence complex $$ \operatorname{ Ind}(G_n)\simeq \begin{cases}
       \mathbb{S}^{\frac{n-2}{2}}\ \vee \ \mathbb{S}^{\frac{n-2}{2}} & \text{if } n \text{ is even}, \\ \ast & \text{if } n\text{ is odd}.
   \end{cases}$$

\end{lemma}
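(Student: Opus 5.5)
The plan is to collapse $G_n$ onto a much smaller graph using Proposition \ref{easy}, and then perform one link/deletion step at a single vertex with Proposition \ref{ld}. The two pieces that appear will turn out to be copies of $H_n$ and $H_{n-2}$, which Lemma \ref{lemma:H} handles.

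\textbf{Step 1: list the neighbourhoods in $G_n$.} In the line graph of $M_{2n}$, the edge $d_1$ shares endpoints with $a_1, a_n, b_1, c_{n-1}$. The added edges contribute $c_1$ and $b_{n-1}$. Hence
\[
N(d_1)=\{a_1,a_n,b_1,c_1,b_{n-1},c_{n-1}\}.
\]
The same computation for $d_2$ gives exactly the same set, and $d_1,d_2$ are not adjacent. For $2\le i\le n-2$, the line-graph edges together with the added edges $b_ic_{i+1}$ and $c_{i-1}b_i$ give
\[
N(b_i)=\{a_i,a_{i+1},b_{i-1},b_{i+1},c_{i-1},c_{i+1}\}=N(c_i).
\]
At the ends the same equality holds: $N(b_1)=N(c_1)=\{a_1,a_2,b_2,c_2,d_1,d_2\}$, and similarly for $b_{n-1}$ and $c_{n-1}$.

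\textbf{Step 2: fold away the duplicates.} Since $N(b_i)=N(c_i)$ and $N(d_1)=N(d_2)$, Proposition \ref{easy} lets us delete $d_2$ and the vertices $c_i$ one at a time. I have to check that each deletion keeps the required inclusion of neighbourhoods for the remaining pairs. This holds because deleting a vertex removes it from both neighbourhoods of any twin pair simultaneously. The result is $\operatorname{Ind}(G_n)\simeq \operatorname{Ind}(G_n')$, where $G_n'$ has vertex set $\{a_1,\dots,a_n,b_1,\dots,b_{n-1},d_1\}$ and the following edges:
\begin{enumerate}[(i)]
\item the path $b_1b_2\cdots b_{n-1}$;
\item $a_jb_{j-1}$ and $a_jb_j$ for $2\le j\le n-1$, together with $a_1b_1$ and $a_nb_{n-1}$;
\item $d_1a_1$, $d_1a_n$, $d_1b_1$ and $d_1b_{n-1}$.
\end{enumerate}

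\textbf{Step 3: link and deletion at $d_1$.} Removing $d_1$ gives $G_n'\setminus d_1=H_n$ exactly. Removing the closed neighbourhood $N[d_1]=\{d_1,a_1,a_n,b_1,b_{n-1}\}$ leaves $a_2,\dots,a_{n-1}$ and $b_2,\dots,b_{n-2}$ with the same pattern of adjacencies. After relabelling, this is $H_{n-2}$.

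By Lemma \ref{lemma:H}, the inclusion $\operatorname{Ind}(H_{n-2})\hookrightarrow \operatorname{Ind}(H_n)$ is null-homotopic in both parities:
\begin{enumerate}[(i)]
\item For $n$ odd, both complexes are contractible.
\item For $n$ even, it is a map $\mathbb{S}^{(n-4)/2}\to \mathbb{S}^{(n-2)/2}$, which is null-homotopic by cellular approximation.
\end{enumerate}
Proposition \ref{ld} then gives $\operatorname{Ind}(G_n)\simeq \operatorname{Ind}(H_n)\vee\Sigma\operatorname{Ind}(H_{n-2})$. For $n$ even this is $\mathbb{S}^{(n-2)/2}\vee\mathbb{S}^{(n-2)/2}$, and for $n$ odd it is contractible.

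\textbf{Step 4: small cases.} For $n=2$ and $n=3$ the indices collide; for instance, when $n=2$ we have $b_1=b_{n-1}$. These cases should be checked directly on the small graphs. For $n=2$, $G_2'$ is $d_1$ joined to the path $a_1b_1a_2$, so $H_0$ is empty and the link is $\mathbb{S}^{-1}$. This gives $\mathbb{S}^0\vee\mathbb{S}^0$, as required. For $n=3$, the link $H_1$ is a single vertex, so it is contractible.

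\textbf{Main obstacle.} The only genuinely delicate point is Step 1. I must verify the neighbourhood equalities against the exact labelling of the edges $d_1,d_2$ in Figure \ref{fig:ML}, i.e.\ which endpoints each of them has. I must also check that the added edges are precisely what makes $b_i$ and $c_i$ twins, including at the boundary indices.
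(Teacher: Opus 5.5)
Your proposal is correct and follows the paper's proof essentially step for step. You fold away $d_2$ and every $c_i$ using the twin relations $N(d_1)=N(d_2)$ and $N(b_i)=N(c_i)$ (Proposition \ref{easy}), then split at $d_1$ into $H_n$ and $H_{n-2}$ and conclude with Lemma \ref{lemma:H} and Proposition \ref{ld}. Your explicit neighbourhood check (which holds with $d_1=v_1v_{2n}$, $d_2=v_nv_{n+1}$), your justification of the null-homotopy, and your separate treatment of $n=2,3$ supply details the paper leaves implicit.
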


\begin{proof}
     Observe that $N(d_1)=N(d_2)$ and $N(c_i)=N(b_i)$ for all $1\le i\le n-1$. Applying Proposition \ref{easy} on $G_n$ w.r.t. $d_1$ and $d_2$, we have that Ind$(G_n)\simeq $ Ind$(G_n\setminus d_2)$. Applying Proposition \ref{easy} repeatedly, we finally get that Ind$(G_n)\simeq $ Ind$(G_n\setminus \{d_2,\ c_i\ |\ 1\le i\le n-1\})$. Let us denote the graph $G_n\setminus \{d_2,\ c_i\ |\ 1\le i\le n-1\}$ by $G_n^\prime$. Thus, we have that Ind$(G_n)\simeq $ Ind$(G_n^\prime)$.

    Consider the graph $G_n^\prime\setminus d_1$ and $G_n^\prime\setminus N[d_1]$. The graph $G_n^\prime\setminus d_1$ is the same as the graph $H_n$, and the graph $G_n^\prime\setminus N[d_1]$ is the same as the graph $H_{n-2}$. For $n$ even, Ind$(G_n^\prime\setminus d_1) \simeq \mathbb{S}^{\frac{n-2}{2}}$ and Ind$(G_n^\prime\setminus N[d_1]) \simeq \mathbb{S}^{\frac{n-4}{2}}$ due to Lemma \ref{lemma:H}. For $n$ odd, Ind$(G_n^\prime\setminus d_1) \simeq$ Ind$(G_n^\prime\setminus N[d_1]) \simeq \ast$. Therefore, we can apply Proposition \ref{ld} on the graph $G_n^\prime$ w.r.t. $d_1$. Applying this Proposition, the result follows.
\end{proof}

\begin{theorem}\label{PMC}
    For $n$ even, the Perfect Matching complex $\mathcal M_p(M_{2n}) \simeq \mathbb{S}^{\frac{n-2}{2}}\ \vee \ \mathbb{S}^{\frac{n-2}{2}}.$ For $n$ odd, 
    $$ \mathcal M_p(M_{2n})\simeq \begin{cases}
       \bigvee_4 \ \mathbb{S}^{2k+1} & \text{if } n=3(2k+1), \\ \bigvee_2 \ \mathbb{S}^{2k+2} & \text{if } n=3(2k+1)+2, 3(2k+1)+4.
   \end{cases}$$
\end{theorem}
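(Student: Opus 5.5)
For $n$ even, the plan is to identify $\mathcal M_p(M_{2n})$ with an independence complex. By Lemma \ref{lemma:badmatch}, the minimal non-faces of $\mathcal M_p(M_{2n})$ are exactly the pairs of $S$ together with pairs of adjacent edges of $M_{2n}$. Hence $\mathcal M_p(M_{2n})$ is the flag complex whose minimal non-faces are the edges of $G_n$, that is, $\mathcal M_p(M_{2n})=\operatorname{Ind}(G_n)$. The even case then follows directly from Lemma \ref{lemma:G}.

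For $n$ odd the complex is no longer flag, because the bad matchings have three elements. Here I would use Proposition \ref{prop:bjorner}.
\begin{itemize}
\item Set $\Delta=\mathcal M_p(M_{2n})$, let $\Delta_1=\overline{p_1}$ and $\Delta_2=\overline{p_2}$ be the full simplices on the two perfect matchings, and let $\Delta_0$ be the subcomplex of faces of $\Delta$ containing no element of $S$.
\item By Lemma \ref{lemma:extend}, every matching avoiding all $s\in S$ extends, so $\Delta_0=\operatorname{Ind}(G_n)$, which is contractible by Lemma \ref{lemma:G}.
\item By Lemma \ref{lemma:extendS}, any face containing some $s\in S_i$ extends only to $p_i$, so it lies in $\Delta_i$. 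Thus $\Delta=\Delta_0\cup\Delta_1\cup\Delta_2$.
\item We have $\Delta_1\cap\Delta_2=\overline{p_1\cap p_2}$. Since $p_1$ and $p_2$ are disjoint, this is empty and lies in $\Delta_0$.
\end{itemize}
Proposition \ref{prop:bjorner}(ii) then gives
\[\mathcal M_p(M_{2n})\simeq \Sigma(\Delta_0\cap\Delta_1)\vee\Sigma(\Delta_0\cap\Delta_2).\]

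Next I would compute $\Delta_0\cap\Delta_i$. This is the complex of subsets of $p_i$ that contain no pair from $S_i$, so it is the independence complex of the graph on the $n$ edges of $p_i$ whose edges are the pairs in $S_i$. For $p_1=\{b_1,b_3,\dots,b_{n-2},d_2,c_2,\dots,c_{n-1}\}$ the $S_1$-pairs are $b_ic_{i+1}$ ($i$ odd), $c_jb_{j+1}$ ($j$ even), $d_2b_1$ and $d_2c_{n-1}$. These form the path
\[d_2\!-\!b_1\!-\!c_2\!-\!b_3\!-\!c_4\!-\!\cdots\!-\!b_{n-2}\!-\!c_{n-1}\!-\!d_2,\]
which closes up, so the graph is the cycle $C_n$; by symmetry the same holds for $p_2$. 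Proposition \ref{cycle} then yields the answer:
\begin{itemize}
\item For $n=3(2k+1)$, $\operatorname{Ind}(C_n)\simeq\mathbb S^{2k}\vee\mathbb S^{2k}$. Suspending gives two copies of $\mathbb S^{2k+1}$ per $i$, so $\bigvee_4\mathbb S^{2k+1}$ in total.
\item For $n=3(2k+1)+2=3(2k+2)-1$ and $n=3(2k+1)+4=3(2k+2)+1$, $\operatorname{Ind}(C_n)\simeq\mathbb S^{2k+1}$. Suspending gives $\bigvee_2\mathbb S^{2k+2}$.
\end{itemize}

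The step I expect to be the main obstacle is justifying that $\Delta=\Delta_0\cup\Delta_1\cup\Delta_2$ exactly. Concretely, one must check that a face $\sigma$ of $\Delta$ containing some $s\in S_1$ is a subset of $p_1$. This holds because $\sigma$ extends to a perfect matching containing $s$, and by Lemma \ref{lemma:extendS} that perfect matching must be $p_1$.

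One must also check that $\sigma$ cannot contain elements of both $S_1$ and $S_2$. This follows because $p_1\cap p_2=\emptyset$.

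Finally, the cycle identification of $\Delta_0\cap\Delta_i$ needs care at the ends. For $n=3$ the path degenerates, and the two end pairs $d_2b_1$ and $d_2c_{n-1}$ must be checked directly.
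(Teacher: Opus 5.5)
Your proposal is correct and follows essentially the same route as the paper. For even $n$ you identify $\mathcal M_p(M_{2n})$ with $\operatorname{Ind}(G_n)$ and apply Lemma~\ref{lemma:G}. For odd $n$ you apply Proposition~\ref{prop:bjorner}(ii) to $\operatorname{Ind}(G_n)\cup\overline{p_1}\cup\overline{p_2}$, use $\operatorname{Ind}(G_n)\cap\overline{p_i}=\operatorname{Ind}(C_n)$, and finish with Proposition~\ref{cycle}. The only difference is presentational: you derive the decomposition directly from Lemmas~\ref{lemma:extend} and~\ref{lemma:extendS} and explain why the graph induced on $p_i$ is the cycle formed by $S_i$, which the paper obtains by citing Lemma~\ref{lemma:badmatch}.
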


\begin{proof}
    Due to Lemma \ref{lemma:badmatch}, $\mathcal M_p(M_{2n})=$ Ind$(G_n)$ for $n$ even. Thus, $\mathcal M_p(M_{2n}) \simeq \mathbb{S}^{\frac{n-2}{2}}\ \vee \ \mathbb{S}^{\frac{n-2}{2}}$ by Lemma \ref{lemma:G}, if $n$ is even. Now, let us consider the case when $n$ is odd. Due to Lemma \ref{lemma:badmatch}, $\mathcal M_p(M_{2n})=$ Ind$(G_n) \cup p_1 \cup p_2$, where $p_1$ and $p_2$ are perfect matchings $\{b_1,b_3,\dots,b_{n-2}, d_2,c_2,c_4,\\ \dots,c_{n-1}\}$ and $\{b_2,b_4,\dots,b_{n-1}, c_1,c_3,\dots,c_{n-2}, d_1\}$, respectively. Since Ind$(G_n)\simeq \ast$ by Lemma \ref{lemma:G}, and $p_1 \cap p_2$ is empty, $$\mathcal M_p(M_{2n})= \bigvee_{i=1}^{2} \Sigma (\operatorname{Ind}(G_n)\cap p_i)$$ by Proposition \ref{prop:bjorner} $(ii)$. By the definition of $G_n$ and the fact that $p_i$ is a simplex we have that $\operatorname{Ind}(G_n)\cap p_i= \operatorname{Ind}(C_n^i)$, where $C_n^1$ is the $n$-cycle $b_1c_2b_3c_4\dots b_{n-2}c_{n-1}d_2b_1$ and $C_n^2$ is the $n$-cycle $c_1b_2c_3b_4\dots c_{n-2}b_{n-1}d_1c_1$. Now, the result follows from Proposition \ref{cycle}.
\end{proof}

\subsection{Perfect matching complex of circular ladder graph}

Let us now study the homotopy type of the perfect matching complex of the circular ladder graph. 

\begin{lemma}\label{lemma:extendCL}
Let $S$ be the set $$\{\{b_i,c_{i+1}\},\ \{c_i,b_{i+1}\},\ \{b_n,c_1\},\ \{c_n,b_1\} \mid 1\le i\le n-1 \}.$$ If $\sigma$ is a matching of $\mathcal C_{2n}$ such that $s \nsubseteq \sigma$  for all $s\in S$, then $\sigma$ can be extended to a perfect matching of $\mathcal C_{2n}$ for all $n\ge 2$.
\end{lemma}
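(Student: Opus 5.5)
The plan is a greedy argument, column by column, as in Lemma \ref{lemma:extend}. The cyclic symmetry of $\mathcal C_{2n}$ removes the special boundary columns needed there. Following Figure \ref{fig:CL}, write $a_j=v_jv_{n+j}$ for the rungs. Write $b_i=v_iv_{i+1}$ and $c_i=v_{n+i}v_{n+i+1}$ for the two rims, with indices taken modulo $n$, so that $b_n=v_nv_1$ and $c_n=v_{2n}v_{n+1}$. In this notation $S$ is exactly the set of pairs $\{b_i,c_{i+1}\}$ and $\{c_i,b_{i+1}\}$ with $i$ read cyclically. Every statement below is therefore invariant under $i\mapsto i+1$ and under swapping the two rims $b\leftrightarrow c$, so one generic case suffices. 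For $n=2$ the rim edges coincide, since $b_1$ and $b_2$ both join $v_1v_2$. I would either check this small case by hand or read $\mathcal C_4$ as the $4$-cycle with rungs, which is immediate.

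Let $\sigma$ be a matching with $s\not\subseteq\sigma$ for all $s\in S$, and let $V(\sigma)$ be its set of covered vertices. I will show that if some vertex is uncovered, then we can add one edge $e$ so that $\sigma\cup\{e\}$ is still a matching avoiding every $s\in S$. Iterating this covers all $2n$ vertices and produces a perfect matching containing $\sigma$. Pick a column $j$ with at least one of $v_j,v_{n+j}$ uncovered.

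If both are uncovered, add $a_j$. No element of $S$ contains a rung, so the invariant is preserved. Otherwise, by symmetry, $v_j\in V(\sigma)$ and $v_{n+j}\notin V(\sigma)$. Since $v_j$ is covered but not by $a_j$, either $b_j\in\sigma$ or $b_{j-1}\in\sigma$.

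Suppose $b_j\in\sigma$. Then $a_{j+1}\notin\sigma$, because $v_{j+1}$ is already covered by $b_j$. Also $c_j\notin\sigma$, because $v_{n+j}$ is uncovered. Finally $c_{j+1}\notin\sigma$, because $\{b_j,c_{j+1}\}\in S$. Hence $v_{n+j+1}$ is uncovered, and we add $c_j=v_{n+j}v_{n+j+1}$.

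Suppose instead $b_{j-1}\in\sigma$. The symmetric argument, using $\{c_{j-2},b_{j-1}\}\in S$, shows $v_{n+j-1}$ is uncovered, and we add $c_{j-1}$.

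The step I expect to need the most care is the invariant: that $\sigma\cup\{e\}$ still avoids $S$. Lemma \ref{lemma:extend} glosses over this point, but the iteration genuinely needs it. Take the case where we add $c_j$. The only members of $S$ containing $c_j$ are $\{b_{j-1},c_j\}$ and $\{c_j,b_{j+1}\}$. Both $b_{j-1}$ and $b_{j+1}$ share an endpoint with $b_j\in\sigma$, so neither lies in $\sigma$. The case of adding $c_{j-1}$ is symmetric: its partners $b_{j-2}$ and $b_j$ both meet $b_{j-1}\in\sigma$.

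Each step increases $|V(\sigma)|$ by $2$, so the process terminates, and it can only terminate when every vertex is covered. Thus $\sigma$ extends to a perfect matching of $\mathcal C_{2n}$.
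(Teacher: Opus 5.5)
Your proof is correct and follows the same route as the paper: a column-by-column greedy extension that adds the rung $a_j$ when both endpoints are free, and otherwise adds $c_j$ or $c_{j-1}$ according to whether $b_j$ or $b_{j-1}$ covers $v_j$, using the pairs $\{b_j,c_{j+1}\}$ and $\{c_{j-2},b_{j-1}\}$ of $S$. You also verify explicitly that $\sigma\cup\{e\}$ still avoids every $s\in S$, and you treat $n=2$ separately; together these make rigorous the iteration that the paper's proof leaves implicit.
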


\begin{proof}
  Let $\sigma$ be a matching of $\mathcal C_{2n}$ such that $s \nsubseteq \sigma$ for all $s\in S$. Let $V(\sigma)\subset V(\mathcal C_{2n})$ be the set of all vertices of $\mathcal C_{2n}$ that are endpoints of the edges of $\sigma$. Consider the edge $a_j\in E(\mathcal C_{2n})$ for $1\le j\le n$. For notation, see Figure \ref{fig:CL}. The endpoints $v_j$ and $v_{n+j}$ of $a_j$ satisfy exactly one of the following four conditions:
\begin{enumerate}[$(i)$]
    \item $v_j, v_{n+j} \in V(\sigma)$;
    \item $v_j \in V(\sigma)$ and $v_{n+j} \notin V(\sigma)$;
    \item $v_j \notin V(\sigma)$ and $v_{n+j} \in V(\sigma)$;
    \item $v_j, v_{n+j} \notin V(\sigma)$.
\end{enumerate}

We want to extend the matching $\sigma$ to a perfect matching. Therefore, when condition $(i)$ arises, look for some other $a_j$ that satisfies conditions $(ii)$, $(iii)$ or $(iv)$. If $a_j$ satisfies condition $(iv)$, then add $a_j$ to the matching $\sigma$. Note that conditions $(ii)$ and $(iii)$ are similar, so we consider condition $(ii)$. 

Let $a_j$ satisfy the condition $(ii)$. This implies that $a_j\notin \sigma$. Since $a_j\notin \sigma$ and $v_j\in V(\sigma)$, either $b_{j-1}\in \sigma$ ($b_n\in \sigma$ if $j=1$) or $b_{j}\in \sigma$. Let us first assume that $b_{j}\in \sigma$. This implies that $a_{j+1}\notin \sigma$ ($a_{1}\notin \sigma$ if $j=n$). Since $v_{n+j}\notin V(\sigma)$, $c_j\notin \sigma$. Since $b_{j}\in \sigma$ and $\{b_{j},c_{j+1}\} \nsubseteq \sigma $ ($\{b_{n},c_{1}\} \nsubseteq \sigma $ if $j=n$), $c_{j+1}\notin \sigma$ ($c_{1}\notin \sigma$ if $j=n$). Now, since $c_j,c_{j+1},a_{j+1} \notin \sigma$ ($c_n,c_{1},a_{1} \notin \sigma$ if $j=n$), we have that $v_{n+j+1}\notin V(\sigma)$ ($v_{n+1}\notin V(\sigma)$ if $j=n$). Therefore, we can add $c_j$ to the matching $\sigma$.

Let us now assume that $b_{j-1}\in \sigma$ ($b_{j-1}\in \sigma$ if $j=1$). Since $b_{j-1}\in \sigma$ and $\{c_{j-2},b_{j-1}\} \nsubseteq \sigma  $, $a_{j-1},c_{j-2} \notin \sigma$ ($a_{n},c_{n-1} \notin \sigma$ if $j=1$). Since $v_{n+j}\notin V(\sigma)$, $c_{j-1}\notin \sigma$ ($c_{n}\notin \sigma$ if $j=1$). Now, since $c_{j-1},c_{j-2},a_{j-1} \notin \sigma$ ($c_{n},c_{n-1},a_{n} \notin \sigma$ if $j=1$), we have that $v_{n+j-1}\notin V(\sigma)$ ($v_{2n}\notin V(\sigma)$ if $j=1$). Therefore, we can add $c_{j-1}$ ($c_{n}$ if $j=1$) to the matching $\sigma$. Hence, the matching $\sigma$ is extended to a perfect matching of $\mathcal C_{2n}$. 
\end{proof}

\begin{lemma}\label{lemma:extendSCL}
            For $n$ odd, each $s \in S$ is a bad matching of $\mathcal C_{2n}$. For $n$ even, each $s_1$ in $$S_1= \{\{b_i,c_{i+1}\},\ \{c_{j},b_{j+1}\},\ \{c_n,b_1\} \mid  1\le i,j\le n-1,\ i \text{ is odd, } j \text{ is even}\}$$ is uniquely extendable to the perfect matching $ p_1=\{b_1,b_3,\dots,b_{n-1}, c_n, c_2,c_4,\dots,c_{n-2}\}$, and each $s_2$ in $$S_2=\{\{b_i,c_{i+1}\},\ \{c_{j},b_{j+1}\},\ \{b_n,c_1\}, \mid 1\le i,j\le n-1,\ i \text{ is even, } j \text{ is odd}\}$$ is uniquely extendable to the perfect matching $p_2=\{b_2,b_4,\dots,b_{n}, c_1,c_3,\dots,c_{n-1}\}$.
\end{lemma}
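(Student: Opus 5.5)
The plan is to mirror the forcing argument of Lemma~\ref{lemma:extendS}, now on the circular ladder, where the labelling is cyclic with indices taken modulo $n$: $b_i=v_iv_{i+1}$, $c_i=v_{n+i}v_{n+i+1}$, and $b_n=v_nv_1$, $c_n=v_{2n}v_{n+1}$. The first step is to take $\sigma_1=\{b_i,c_{i+1}\}$ and record which edges it blocks.
\begin{itemize}
\item $b_i\in\sigma_1$ blocks $b_{i+1}$ and $a_{i+1}$.
\item $c_{i+1}\in\sigma_1$ blocks $a_{i+2}$ and $c_{i+2}$.
\end{itemize}
The vertex $v_{i+2}$ is now incident to only one available edge, namely $b_{i+2}$, since its other edges $b_{i+1}$ and $a_{i+2}$ are blocked. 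So $b_{i+2}$ is forced. Next, $v_{n+i+3}$ has edges $c_{i+2}$ (blocked), $a_{i+3}$ (blocked by $b_{i+2}$) and $c_{i+3}$, so $c_{i+3}$ is forced. Induction on the number of steps then shows that the pattern $b_i,c_{i+1},b_{i+2},c_{i+3},\dots$ propagates forward around the cycle, with each new edge forced by the pair before it. Uniqueness comes for free: every edge added is forced, so any perfect matching containing $\sigma_1$ must contain the whole forced set.

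The second step is to determine what happens when the propagation wraps around, since indices are cyclic. The forced set consists of $b_j$ for $j\equiv i \pmod 2$ and $c_j$ for $j\equiv i+1 \pmod 2$, with indices mod $n$.
\begin{itemize}
\item If $n$ is even, this parity assignment is consistent around the cycle. The forced set is then exactly $p_1$ when $i$ is odd (it contains $c_n$ because $n$ is even) and exactly $p_2$ when $i$ is even. This is a perfect matching: the $b_j$ of one parity cover all top vertices $v_1,\dots,v_n$, and the $c_j$ of the other parity cover all bottom vertices.
\item If $n$ is odd, the propagation eventually forces two adjacent edges of the same row, for instance both $b_n$ and $b_1$, which share $v_1$. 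Equivalently, one finishes with $b_j$ chosen and $c_{j+1}$ also required at an index where they conflict. This contradiction shows that $s$ admits no extension, so $s$ is not extendable.
\end{itemize}
To conclude in the odd case that $s$ is a \emph{bad} matching, i.e.\ a minimal non-extendable matching, I also need that each single edge extends. This holds by Lemma~\ref{lemma:extendCL}, since a single edge contains no member of $S$.

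The third step handles the remaining members of $S$: the pairs $\{c_j,b_{j+1}\}$ and the wrap-around pairs $\{b_n,c_1\}$ and $\{c_n,b_1\}$. These follow from the same argument with the roles of the two cycles swapped, or with $i=n$ read cyclically. The classification into $S_1$ and $S_2$ then reduces to checking that the parities match the stated definitions of $p_1$ and $p_2$. For example, $\{c_n,b_1\}$ propagates to $b_1,c_2,b_3,\dots$, which gives $p_1$, and $\{b_n,c_1\}$ propagates to $c_1,b_2,\dots$, which gives $p_2$.

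The main obstacle is the bookkeeping at the wrap-around in the odd case: I must locate exactly where the contradiction occurs and make sure it is a genuine contradiction and not an artifact of indexing. The cleanest route is to observe that a perfect matching containing $b_i$ and $c_{i+1}$ and no $a$-edge would have to alternate between rows with period $2$ all the way around the cycle. Such an alternation is impossible when $n$ is odd.
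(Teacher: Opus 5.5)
Your proposal is correct and uses essentially the same forcing argument as the paper: you force $b_{i+2},c_{i+3},\dots$ one vertex at a time, then use the parity of $n$ to decide whether the propagation closes up into $p_1$ or $p_2$, or instead leaves a vertex with no available edge. The differences are cosmetic. You propagate forward cyclically, whereas the paper propagates in both directions along indices $1,\dots,n-1$ and then inspects the two leftover vertices. You also explicitly verify minimality in the odd case via Lemma~\ref{lemma:extendCL}, which the paper leaves implicit.
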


\begin{proof}
   Since $\mathcal C_{2n}$ is a circular ladder, the argument for the matchings $\{b_i,c_{i+1}\}$ and $\{b_n,c_{1}\}$ is similar, for $1\le i\le n-1$. Let us consider the matching $\{b_i,c_{i+1}\}$, where $1\le i\le n-1$. Let us denote this matching by $\sigma$. Since $b_i, c_{i+1}\in \sigma$, $b_{i+1}, a_{i+2}$ cannot be added to $\sigma$, respectively. Since $b_{i+2}$ is the only remaining edge incident to $v_{i+2}$, it must be added to $\sigma$. This implies that $a_{i+3}$ cannot be added to $\sigma \cup \{b_{i+2}\}$. Since $c_{i+1}\in \sigma$, $c_{i+2}$ cannot be added to $\sigma \cup \{b_{i+2}\}$. Thus, $c_{i+3}$ must be added to the matching $\sigma \cup \{b_{i+2}\}$, as this is the only edge remaining incident to $v_{n+i+3}$. Now, we have the matching $\{b_i,c_{i+1},b_{i+2},c_{i+3}\}$. There are several cases depending on $i$ and $n$. Let us discuss all the cases one by one. 
  
 \noindent \textbf{Case ($i,n$ both are even):} Following a similar argument as above, we finally get the matching $\{b_i,c_{i+1},b_{i+2},c_{i+3},\dots, b_{n-2}, c_{n-1}, c_{i-1}, b_{i-2},\dots, c_3,b_2,c_1\}.$ This matching can be extended to a perfect matching of $\mathcal C_{2n}$, since the remaining vertices $v_{1}$ and $v_{n}$ are endpoints of the edge $b_n$. Adding $b_n$, we get the perfect matching $p_2.$

  \noindent \textbf{Case ($i$ is odd, $n$ is even):} Following a similar argument as above, we finally get the matching $\{b_i,c_{i+1},b_{i+2},c_{i+3},\dots, b_{n-3}, c_{n-2},b_{n-1}, c_{i-1}, b_{i-2},\dots, c_2,b_1\}.$
  This matching can be extended to a perfect matching of $\mathcal C_{2n}$, since the remaining vertices $v_{n+1}$ and $v_{2n}$ are endpoints of the edge $c_n$. Adding $c_n$, we get the perfect matching $p_1.$

  \noindent \textbf{Case ($i$ is even, $n$ is odd):} Following a similar argument as above, we get the matching $\{b_i,c_{i+1},b_{i+2},c_{i+3},\dots, b_{n-3}, c_{n-2},b_{n-1}, c_{i-1}, b_{i-2},\dots, c_3,b_2,c_1\}.$
This matching cannot be extended to a perfect matching of $\mathcal C_{2n}$, since the remaining vertices $v_{1}$ and $v_{2n}$ are not endpoints of any edge in $\mathcal C_{2n}$.

  \noindent \textbf{Case ($i,n$ both are odd):} Following a similar argument as above, we get the matching $\{b_i,c_{i+1},b_{i+2},c_{i+3},\dots, b_{n-2}, c_{n-1}, c_{i-1}, b_{i-2},\dots, c_2,b_1\}.$ This matching cannot be extended to a perfect matching of $\mathcal C_{2n}$, since the remaining vertices $v_{n}$ and $v_{n+1}$ are not endpoints of any edge in $\mathcal C_{2n}$.

  Similar reasoning works for the matchings $\{c_i,b_{i+1}\}$ and $\{c_6,b_1\}$ for $1\le i\le n-1$. Hence, the result follows.
\end{proof}

\begin{lemma}\label{lemma:badmatchCL}
For $n$ odd, $S$ is the set of all bad matchings of $\mathcal C_{2n}$. For $n$ even, a bad matching of $\mathcal C_{2n}$ is either of the form $s_1 \cup \{e_1\}$ or $s_2 \cup \{e_2\}$, where $s_i\in S_i$ and $e_i\in E(\mathcal C_{2n})\setminus p_i$ for $i\in \{1,2\}$.
\end{lemma}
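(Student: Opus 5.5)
The plan mirrors the proof of Lemma \ref{lemma:badmatch}, with the roles of the parities of $n$ exchanged; it only combines Lemmas \ref{lemma:extendCL} and \ref{lemma:extendSCL}.

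\emph{A bad matching must contain some $s\in S$.} By Lemma \ref{lemma:extendCL}, any matching $\sigma$ of $\mathcal C_{2n}$ that contains no $s\in S$ extends to a perfect matching. Hence every matching that fails to extend contains some $s\in S$, and in particular so does every bad matching.

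\emph{Case $n$ odd.} By Lemma \ref{lemma:extendSCL}, each $s\in S$ is itself non-extendable. Each $s$ has two edges, and any single edge of $\mathcal C_{2n}$ lies in some perfect matching (for example the rung matching $\{a_1,\dots,a_n\}$, or $p_1$ or $p_2$ as in Lemma \ref{lemma:extendSCL}). So $s$ is a minimal non-extendable matching, i.e.\ bad. Conversely, a bad matching $M$ contains some $s\in S$, and $s$ is already non-extendable, so minimality forces $M=s$. Hence $S$ is exactly the set of bad matchings.

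\emph{Case $n$ even.} Let $M$ be a bad matching and pick $s\subseteq M$ with $s\in S$. Since $S=S_1\cup S_2$, say $s\in S_i$. By Lemma \ref{lemma:extendSCL}, $p_i$ is the unique perfect matching containing $s$. As $M$ does not extend, $M\not\subseteq p_i$, so there is an edge $e_i\in M\setminus p_i$. Then $s\cup\{e_i\}\subseteq M$ is a matching, and it is non-extendable because any perfect matching containing it contains $s$, hence equals $p_i$, which does not contain $e_i$. Minimality of $M$ gives $M=s\cup\{e_i\}$ with $e_i\in E(\mathcal C_{2n})\setminus p_i$, as claimed.

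\emph{Main obstacle.} The only delicate point is that Lemma \ref{lemma:extendSCL} must be read as covering all of $S$ when $n$ is even, i.e.\ $S_1\cup S_2=S$. I would check this directly. The pairs $\{b_i,c_{i+1}\}$ with $1\le i\le n-1$ split by the parity of $i$ into $S_1$ and $S_2$. The pairs $\{c_j,b_{j+1}\}$ with $1\le j\le n-1$ split by the parity of $j$ in the same way. The two wrap-around pairs $\{c_n,b_1\}$ and $\{b_n,c_1\}$ lie in $S_1$ and $S_2$ respectively. Beyond this bookkeeping, the argument is a direct consequence of the two preceding lemmas.
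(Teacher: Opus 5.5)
Your argument is correct and matches the paper's: both combine Lemma \ref{lemma:extendCL}, which says every non-extendable matching contains some $s\in S$, with Lemma \ref{lemma:extendSCL}; you also spell out the minimality step and the check $S_1\cup S_2=S$, which the paper leaves implicit. One small slip is that for $n$ odd the alternating perfect matchings $p_1,p_2$ do not exist (they require $n$ even), so to show a single edge $b_j$ or $c_j$ extends, use $\{b_j,c_j\}\cup\{a_i : i\neq j,\,j+1\}$ instead (indices mod $n$).
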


\begin{proof}
 Since a bad matching is a minimal matching that cannot be extended to a perfect matching, the result follows from Lemmas \ref{lemma:extendCL} and \ref{lemma:extendSCL}, when $n$ is odd. Now, let $n$ be even. From Lemma \ref{lemma:extendCL}, we have that a bad matching must have an element from the set $S$. From Lemma \ref{lemma:extendSCL}, we have that a matching of $S$ can be extended to the perfect matching $p_1$ or $p_2$. This implies that a bad matching is of the form $s_1 \cup \{e_1\}$ or $s_2 \cup \{e_2\}$, where $s_i\in S_i$ and $e_i\in E(\mathcal C_{2n})\setminus p_i$ for $i\in \{1,2\}$.   
\end{proof}

Let us denote the line graph of $\mathcal C_{2n}$ with additional edges $b_nc_1,\ c_nb_1,\ b_ic_{i+1}$ and $c_ib_{i+1}\  (1\le i\le n-1)$ by $K_n$. The vertex set of $K_n$, $V(K_n)=E(\mathcal C_{2n})$ and the edge set $$E(K_n)=\{uv\ |\ u \text{ and } v \text{ are adjacent edges in } \mathcal C_{2n}\} \ \cup\  \{b_nc_1,\ c_nb_1,\ b_ic_{i+1},\ c_ib_{i+1} \mid 1\le i\le n-1\}.$$ Note that $K_n$ is isomorphic to the graph $G_n$ in Figure \ref{fig:G}. If the vertices $b_n$ and $c_n$ of $K_n$ are renamed $d_1$ and $d_2$, respectively, the graph $G_n$ is obtained.

\begin{theorem}\label{PMCCL}
    For $n$ odd, the perfect matching complex $\mathcal M_p(\mathcal C_{2n}) \simeq \ast.$ For $n$ even, 
    $$ \mathcal M_p(\mathcal C_{2n})\simeq \begin{cases}
       \mathbb{S}^{\frac{n-2}{2}}\ \vee \ \mathbb{S}^{\frac{n-2}{2}} \bigvee_4 \ \mathbb{S}^{2k} & \text{if } n=3(2k), \\ \mathbb{S}^{\frac{n-2}{2}}\ \vee \ \mathbb{S}^{\frac{n-2}{2}} \bigvee_2 \ \mathbb{S}^{2k+1} & \text{if } n=3(2k+1)\pm 1.
   \end{cases}$$
\end{theorem}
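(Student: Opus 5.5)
We mirror the proof of Theorem \ref{PMC}, with the roles of the parities of $n$ swapped. By Lemma \ref{lemma:extendCL}, every matching that contains no member of $S$ extends to a perfect matching. By Lemmas \ref{lemma:extendSCL} and \ref{lemma:badmatchCL}, for $n$ odd the members of $S$ are exactly the bad matchings. Since $S$ consists of the edge pairs added to the line graph, the faces of $\mathcal M_p(\mathcal C_{2n})$ are precisely the independent sets of $K_n$. Hence $\mathcal M_p(\mathcal C_{2n})=\operatorname{Ind}(K_n)$ for $n$ odd. Because $K_n\cong G_n$ (rename $b_n,c_n$ as $d_1,d_2$), Lemma \ref{lemma:G} gives $\operatorname{Ind}(K_n)\simeq\ast$, which settles the odd case.

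For $n$ even, we first describe the complex as
\[
\mathcal M_p(\mathcal C_{2n})=\operatorname{Ind}(K_n)\cup \overline{p_1}\cup \overline{p_2},
\]
where $\overline{p_i}$ is the full simplex on the perfect matching $p_i$ of Lemma \ref{lemma:extendSCL}. The inclusion $\supseteq$ is clear. For $\subseteq$, take a face $\sigma$ not in $\operatorname{Ind}(K_n)$. Then $\sigma$ contains some $s\in S=S_1\cup S_2$, and $s$ extends uniquely to $p_1$ or to $p_2$. So $\sigma$ extends only to that $p_i$, which forces $\sigma\subseteq p_i$.

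Now apply Proposition \ref{prop:bjorner} with $\Delta_0=\operatorname{Ind}(K_n)$, $\Delta_1=\overline{p_1}$ and $\Delta_2=\overline{p_2}$. Since $p_1\cap p_2=\emptyset$, the hypothesis $\Delta_1\cap\Delta_2\subseteq\Delta_0$ holds. Now $\Delta_0\simeq \mathbb S^{\frac{n-2}{2}}\vee\mathbb S^{\frac{n-2}{2}}$ by Lemma \ref{lemma:G}, so $\Delta_0$ is not contractible and part $(ii)$ does not apply. Part $(i)$ gives
\[
\mathcal M_p(\mathcal C_{2n})\simeq \Delta_0\cup \operatorname{cone}(\Delta_0\cap\overline{p_1})\cup\operatorname{cone}(\Delta_0\cap\overline{p_2}).
\]

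The intersections are computed as in Theorem \ref{PMC}. The subsets of $p_i$ independent in $K_n$ are those avoiding the added edges among elements of $p_i$. One checks that these edges form cycles:
\begin{itemize}
\item for $p_1$, the cycle $b_1c_2b_3c_4\cdots b_{n-1}c_n b_1$;
\item for $p_2$, the cycle $c_1b_2c_3\cdots c_{n-1}b_n c_1$.
\end{itemize}
Both are $n$-cycles, so $\Delta_0\cap\overline{p_i}=\operatorname{Ind}(C_n)$.

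To finish we use Proposition \ref{cone and suspension}, applied twice, which turns each cone into a wedge summand $\Sigma\operatorname{Ind}(C_n)$. This gives
\[
\mathcal M_p(\mathcal C_{2n})\simeq \mathbb S^{\frac{n-2}{2}}\vee\mathbb S^{\frac{n-2}{2}}\vee\Sigma\operatorname{Ind}(C_n)\vee\Sigma\operatorname{Ind}(C_n).
\]
Proposition \ref{cycle} then gives the stated dimensions:
\begin{itemize}
\item for $n=3(2k)$, $\operatorname{Ind}(C_n)\simeq\mathbb S^{2k-1}\vee\mathbb S^{2k-1}$, producing four copies of $\mathbb S^{2k}$;
\item for $n=3(2k+1)\pm1$, $\operatorname{Ind}(C_n)\simeq\mathbb S^{2k}$, producing two copies of $\mathbb S^{2k+1}$.
\end{itemize}

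The main obstacle is the null-homotopy required by Proposition \ref{cone and suspension}: each inclusion $\operatorname{Ind}(C_n)\hookrightarrow\operatorname{Ind}(K_n)$ must be null-homotopic. We will try dimensions first. The sphere in $\operatorname{Ind}(C_n)$ has dimension about $n/3$, while $\operatorname{Ind}(K_n)$ is a wedge of $\frac{n-2}{2}$-spheres, so for large $n$ the inclusion is null by cellular approximation. The small cases with equal dimensions ($n=4,6$) can be checked by hand. Failing that, we would exhibit the null-homotopy explicitly. In $\operatorname{Ind}(K_n)$, $c_i$ and $b_i$ have the same neighbourhood, as do $b_n$ and $c_n$. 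So each vertex of $p_1$ can be exchanged for its twin, which suggests that $\operatorname{Ind}(C_n)$ lies in the star of a suitable vertex, or can be pushed into a contractible subcomplex along the retractions used in Lemma \ref{lemma:G}.
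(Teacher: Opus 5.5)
Your proposal is correct and follows the paper's proof step for step. You identify the faces with $\operatorname{Ind}(K_n)$ (together with the simplices $\overline{p_1},\overline{p_2}$ when $n$ is even), use $K_n\cong G_n$ and Lemma~\ref{lemma:G}, apply Proposition~\ref{prop:bjorner}(i) with intersections $\operatorname{Ind}(C_n)$, and turn the cones into suspensions via Proposition~\ref{cone and suspension}.

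The paper only asserts the null-homotopy of $\operatorname{Ind}(C_n)\hookrightarrow\operatorname{Ind}(K_n)$. Your connectivity argument for it covers every even $n\ge 4$ at once. $\operatorname{Ind}(C_n)$ is a wedge of spheres of dimension at most $\frac{n-4}{2}$, with equality at $n=4,6,8$. $\operatorname{Ind}(K_n)$ is $\frac{n-4}{2}$-connected. So there are no equal-dimension cases, and neither the hand check nor the fallback is needed.
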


\begin{proof}
      Due to Lemma \ref{lemma:badmatchCL}, $\mathcal M_p(\mathcal C_{2n})=$ Ind$(K_n)$ for $n$ odd. Thus, $\mathcal M_p(\mathcal C_{2n}) \simeq \ast$ by Lemma \ref{lemma:G}, if $n$ is odd. Now, let us consider the case where $n$ is even. Due to Lemma \ref{lemma:badmatchCL}, $\mathcal M_p(\mathcal C_{2n})=$ Ind$(K_n) \cup p_1 \cup p_2$, where $p_1$ and $p_2$ are perfect matchings $\{b_1,b_3,\dots,b_{n-1}, c_n,c_2,c_4, \dots,c_{n-2}\}$ and $\{b_2,b_4,\dots,b_{n}, c_1,c_3,\dots,c_{n-1}\}$, respectively. Since $p_1 \cap p_2$ is empty, 
      $$\mathcal M_p(\mathcal C_{2n})= \operatorname{Ind}(K_n) \cup \bigcup_{1\le i\le 2} \text{cone} (\operatorname{Ind}(K_n)\cap p_i) $$
      by Proposition \ref{prop:bjorner} $(i)$. By the definition of $K_n$ and the fact that $p_i$ is a simplex, we have $\operatorname{Ind}(K_n)\cap p_i= \operatorname{Ind}(C_n^i)$, where $C_n^1$ is the $n$-cycle $b_1c_2b_3c_4\dots b_{n-1}c_{n}b_1$ and $C_n^2$ is the $n$-cycle $c_1b_2c_3b_4\dots c_{n-1}b_{n}c_1$. It follows from Proposition \ref{cycle}, for $n=3(2k)$, $ \operatorname{ Ind}(C_n^i)\simeq  \mathbb{S}^{2k-1} \vee \mathbb{S}^{2k-1}$. By Lemma \ref{lemma:G}, $\operatorname{Ind}(K_n)\cong \operatorname{Ind}(G_n)\simeq \mathbb{S}^{\frac{n-2}{2}}\ \vee \ \mathbb{S}^{\frac{n-2}{2}}$.

Since $\operatorname{Ind}(C_n^i)\simeq \mathbb{S}^{2k-1}\vee\mathbb{S}^{2k-1}$ and $\operatorname{Ind}(K_n)\simeq \mathbb{S}^{\frac{n-2}{2}}\vee\mathbb{S}^{\frac{n-2}{2}}$, the subcomplex $\operatorname{Ind}(C_n^i)$ is contractible in $\operatorname{Ind}(K_n)$ when $n=3(2k)$. Similarly, when $n=3(2k+1)\pm1$, we have $\operatorname{Ind}(C_n^i)\simeq\mathbb{S}^{2k}$ and $\operatorname{Ind}(K_n)\simeq \mathbb{S}^{\frac{n-2}{2}}\vee\mathbb{S}^{\frac{n-2}{2}}$, and hence $\operatorname{Ind}(C_n^i)$ is contractible in $\operatorname{Ind}(K_n)$. Therefore, by Proposition \ref{cone and suspension}, we have
$$\mathcal M_p(\mathcal C_{2n})\simeq\begin{cases}
       \mathbb{S}^{\frac{n-2}{2}}\ \vee \ \mathbb{S}^{\frac{n-2}{2}} \bigvee_2 \Sigma (\mathbb{S}^{2k-1} \vee \mathbb{S}^{2k-1}) & \text{if } n=3(2k), \\ \mathbb{S}^{\frac{n-2}{2}}\ \vee \ \mathbb{S}^{\frac{n-2}{2}} \bigvee_2 \ \Sigma(\mathbb{S}^{2k}) & \text{if } n=3(2k+1)\pm 1.
   \end{cases}$$
Therefore,
$$\mathcal M_p(\mathcal C_{2n})\simeq\begin{cases}
       \mathbb{S}^{\frac{n-2}{2}}\ \vee \ \mathbb{S}^{\frac{n-2}{2}} \bigvee_4 \ \mathbb{S}^{2k} & \text{if } n=3(2k), \\ \mathbb{S}^{\frac{n-2}{2}}\ \vee \ \mathbb{S}^{\frac{n-2}{2}} \bigvee_2 \ \mathbb{S}^{2k+1} & \text{if } n=3(2k+1)\pm 1.
   \end{cases}$$
This completes the proof.
  \end{proof}

\section{Future Directions}
The explicit determination of the homotopy types of independence complexes and perfect matching complexes of M\"obius and circular ladder graphs naturally leads to several further questions.

\begin{problem}
{\rm
A natural direction for future research is to investigate other graph-associated simplicial complexes arising from M\"obius ladder graphs $M_{2n}$ and circular ladder graphs $\mathcal{C}_{2n}$. In addition to the independence and perfect matching complexes, one may consider, for example, the matching complex, neighborhood complex, clique complex, cut complex, and total cut complex associated with these graphs. These complexes encode different combinatorial structures and may exhibit rich and distinct topological behavior. It would be interesting to determine their homotopy types, homology groups, and other topological invariants, and to investigate whether the symmetries of $M_{2n}$ and $\mathcal{C}_{2n}$ lead to periodic or otherwise systematic patterns in the topology of the associated complexes.

}
\end{problem}

\begin{problem}
{\rm
Another interesting problem is to investigate the topology of the independence complexes and perfect matching complexes for other highly symmetric graph families related to the M\"obius ladder graphs $M_{2n}$ and the circular ladder graphs $\mathcal{C}_{2n}$. In particular, it would be interesting to study suitable $4$-regular graph families and determine whether their associated complexes exhibit periodic homotopy types, analogous combinatorial patterns, or other systematic topological behavior.
}
\end{problem}

\bigskip

\noindent {\bf Acknowledgement:}  The first author is supported by the Institute fellowship from the Indian Institute of Technology Delhi, India.

\smallskip

\noindent {\bf Data availability:} The authors declare that all data supporting the findings of this study are available within the article.

\smallskip

\noindent {\bf Declarations}

\noindent {\bf Conflict of interest:} No potential conflict of interest was reported by the authors.

\end{document}